\documentclass[12pt]{amsart} 
\usepackage{amsmath, amsthm, amscd, amsfonts,amssymb,graphicx,enumerate}
\numberwithin{equation}{section}
\makeatletter
\@namedef{subjclassname@2010}{%
  \textup{2010} Mathematics Subject Classification}
\makeatother

\frenchspacing

\textwidth=13.5cm
\textheight=23cm
\parindent=16pt
\oddsidemargin=-0.5cm
\evensidemargin=-0.5cm
\topmargin=-0.5cm





\newcommand{\teq}{\arabic{section}.\arabic{equation}}

\newcommand{\teql}{\Alph{section}.\arabic{equation}}






\newcommand{\sqr}[2]{{\vcenter{\vbox{\hrule height.#2pt\hbox{\vrule width.#2pt
height#1pt \kern#1pt\vrule width.#2pt}\hrule height.#2pt}}}}

\newcommand{\ssquare}{{\qquad\hfill$\square$}}


\newcounter{eqcount}
\newcounter{ttopic}

\newenvironment{edesc}{\refstepcounter{equation}\begin{enumerate}}%
{\end{enumerate}}

\newcommand{\sdisplay}[1]{{{$\scriptstyle\bullet$}\! #1 \!{$\scriptstyle\bullet$}}} 
\newcommand{\Sdisplay}[1]{{\vskip.2in\noindent $\star$ {\hel #1} $\star$\hskip.1in:}} 

\newcommand{\ring}[1]{{\mathbb #1}}
\newcommand\bZ{{\ring{Z}}}

\newcommand\bC{{\ring{C}}} \newcommand\bR{{\ring{R}}}
\newcommand\bF{{\ring{F}}} \newcommand\bQ{{\ring{Q}}}
\newcommand\bH{{\ring{H}}}
\newcommand{\csp}[1]{{\mathbb #1}}
\newcommand{\tsp}[1]{{\mathcal #1}}

\newcommand{\prP}{\csp{P}}

\newcommand{\sO}{{\tsp{O}}} 
\newcommand{\sQ}{\tsp{Q}}
\newcommand{\sP}{{\tsp {P}}} 
 \newcommand{\sS}{{\tsp {S}}}
 \newcommand{\sH}{{\tsp {H}}}
\newcommand{\sX}{{\tsp {X}}} 
\newcommand{\sM}{{\tsp {M}}} 
\newcommand{\sG}{{\tsp {G}}}


\newcommand{\eql}[2]{{\rm (\ref{#1}\ref{#2})}} 

\newcommand{\vect}[1]{{\pmb #1}} 
 \newcommand{\bg}{\vect{g}}
 
\newcommand{\bp}{{\vect{p}}} \newcommand{\bx}{{\vect{x}}}
 \newcommand{\bw}{{\vect{w}}}
 \newcommand{\bz}{{\vect{z}}}

\newcommand{\row}[2]{{#1_1,\ldots,#1_{#2}}}

\newcommand{\smatrix}[4]{{\big(\begin{array}{cc}
\!\lower2pt\hbox{$\scriptstyle#1$} &\lower2pt\hbox{$\scriptstyle#2$}\!
\\\! \raise2pt\hbox{$\scriptstyle#3$} &\raise2pt\hbox{$\scriptstyle#4$}
\!\end{array}\big)}}
\newcommand{\col}[2]{{\big(\begin{array}{c}
\!\lower2pt\hbox{$\scriptstyle#1$}  \!
\\\! \raise2pt\hbox{$\scriptstyle#2$}
\!\end{array}\big)}}


\newcommand{\texto}[1]{{\textr{#1}}}
\newcommand{\GL}{\texto{GL}} \newcommand{\SL}{\texto{SL}}
 \newcommand{\ind}{\texto{ind}}
\newcommand{\PSL}{\texto{PSL}} 
 
 \renewcommand{\ni}{\texto{Ni}}
 
\newcommand{\textr}[1]{{\text{\rm #1}}}
 \newcommand{\ord}{\textr{ord}}
\newcommand{\abs}{\textr{abs}}  
 
 \newcommand{\inn}{\textr{in}}
 
\newcommand{\pr}{\textr{pr}}



\newcommand{\BCL}{{\text{\rm BCL}}}
\newcommand{\IGP}{{\text{\rm IGP}}}

\newcommand{\rd}{\texto{rd}}


\newcommand{\textb}[1]{{\text{\bf #1}}}
\newcommand{\bfC}{{\textb{C}}}
\newcommand{\longmapright}[2]{\smash{\mathop{\hbox to
#2pt{\rightarrowfill}}\limits^{#1}}}
\newcommand{\longmapleft}[2]{\smash{\mathop{\hbox to
#2pt{\leftarrowfill}}\limits^{#1}}}

\newcommand{\np}{{+}}   \newcommand{\nm}{{-}}

\newcommand{\lrang}[1]{{\langle #1\rangle}}

\newcommand{\eqdef}{\stackrel{\text{\rm def}}{=}}

\newcommand{\pa}[2]{{\frac{\partial #1} {\partial #2}}}






\newfont{\sevenrm}{cmr7}
\newfont{\bsevenrm}{cmbx7}
\newfont{\mathseven}{cmsy7}
\newfont{\bigmath}{cmsy10 scaled 1200}
\newfont{\fiverm}{cmr5}
\newfont{\bfiverm}{cmbx5}
\newfont{\hel}{cmbx10 scaled 1200}
\newfont{\eu}{eufb10}
\newfont{\sseu}{eufm5}
\newfont{\seu}{eufm7}
\newfont{\Cal}{cmmib10}
\newfont{\sCal}{cmmib7}
\newfont{\zch}{eusb10}



\theoremstyle{plain}
\newtheorem{thm}{Theorem}[section] 
\newtheorem{lem}[thm]{Lemma}
\newtheorem{princ}[thm]{Principle}

\newtheorem{prop}[thm]{Proposition}

 %


\theoremstyle{definition}
\newtheorem{defn}[thm]{Definition}
\newtheorem{exmp}[thm]{Example}
\newtheorem{guess}[thm]{Conjecture}

\newtheorem{prob}[thm]{Problem}

\theoremstyle{remark}
\newtheorem{rem}[thm]{Remark}

\newcommand{\xs}{\times^s\!}




\def\pic #1 by #2 (#3){\vbox to #2{\hrule width 
#1 height 0pt depth 0pt\vfill\special{picture #3}}}
\def\scaledpicture#1
by #2 (#3 scaled #4){{\dimen0=#1 \dimen1=#2\divide\dimen0 by
 1000 \multiply\dimen0 by #4\divide\dimen1 by 1000 \multiply\dimen1 
by #4\pic \dimen0 by \dimen1 (#3 scaled #4)}}

\newcommand{\comm}[1]{{}}
\setcounter{tocdepth}{2}

\newcommand{\bP}{{\tsp {P}}}

\newcommand{\Spin}{{\text{\rm Spin}}}

\renewcommand{\phi}{\varphi}
\newcommand{\Fr}{\text{\rm Fr}}
\newcommand{\MT}{\text{\bf MT}}

\newcommand{\TL}{\text{TimeLine}}

\newcommand{\HM}{\text{\bf HM}}

\newcommand{\OIT}{\text{\bf OIT}}

\renewcommand{\BCL}{\text{\bf BCL}}

\newcommand{\RIGP}{\text{\bf RIGP}}

\newcommand{\rk}{\text{\rm rk}}

\newcommand{\lcm}{\text{lcm}}

\newcommand{\C}{{\text{\rm C}}}
\newcommand{\CM}{\text{CM}}

\newcommand{\ab}{{{}_{\text{\rm ab}}}}
\newcommand{\geng}{{{\text{\bf g}}}}
\newcommand{\sh}{{{\text{\bf sh}}}}

\newcommand{\sF}{{\tsp F}}
\newcommand{\sK}{{\tsp K}}

\newcommand{\Cyc}{{\text{\rm Cyc}}}


\newcommand{\fG}[1]{{\,{}_{#1}\tilde G}} 
\newcommand{\tfG}[2]{{\,{}_{#1}^{#2} G}}



\begin{document}
\baselineskip=17pt
\hoffset.75in

\title[Open Image Theorem]{Introduction to \\ {\sl Monodromy, $\ell$-adic Representations\\ and  The Regular Inverse Galois Problem}}

\author[M.~D.~Fried]{Michael
D.~Fried}
\address{Emeritus, UC Irvine \\ 3547 Prestwick Rd, Billings MT 59101}
\email{mfried@math.uci.edu}

\date{} 
\maketitle

\begin{abstract} There are two famous Abel Theorems. The more well-known, describes the \lq\lq abelian (analytic) functions\rq\rq\ on a one dimensional compact complex torus. The other bundles those complex tori, with their prime degree isogenies. Riemann's generalization of the first features his famous $\Theta$ functions. His deepest work aimed to generalize Abel's second theorem, though he died too young to fulfill that vision. 

To go from Abel to Riemann is a big jump. We go even further in $$\begin{array}{c}\text{extending isogonies of complex torii $\Longrightarrow$} \\ \text{towers of modular curves $\Longrightarrow$ {\bf M}(odular){\bf T}(ower)s.}\end{array}$$

\S\ref{WhatGauss}  starts with 1-variable rational functions (as in junior high). With these we introduce {\sl Nielsen classes\/} attached to $(G,\bfC)$  ($\bfC$ is $r$ conjugacy classes in a finite group $G$) and a {\sl braid action\/} on them. These give {\sl reduced Hurwitz\/} spaces, denoted $\sH(G,\bfC)^{\rd}$.  

For any prime $\ell$ dividing $|G|$ that has no $\bZ/\ell$ quotient, there are many choices of $\bfC$, giving a {\sl canonical\/} tower of reduced Hurwitz spaces over $\sH(G,\bfC)^{\rd}$, using the {\sl Universal Frattini cover\/}, $\tilde G$, of $G$ and  $\tilde G_\ab$, its {\sl abelianized\/} version. The towers are nonempty assuming $\bfC$ are $\ell'$ classes satisfying a cohomological condition from a {\sl lift invariant}. 

{\sl The\/} first case --  $G=D_\ell$, the order $2\ell$ ($\ell$ an odd prime) dihedral group, $\bfC$ four repetitions of the involution class -- gives modular curve towers. 

General tower levels, $\sH_k$, $k\ge 0$, naturally referenced by  $\ell^{k\np1}$, share the {\sl moduli\/} properties -- tower level points represent algebraic objects -- of modular curves. For {\sl all\/} 1-dimensional examples ($r=4$):  
$$\text{$\sH_k$ is an upper half-plane quotient ramified over the $j$-line at $0,1,\infty$.}$$    

We introduce the book \cite{Fr18}. It shows how \MT s expands applications for modular curves by interpreting problems directly from the $$\text{{\bf R}(egular) {\bf I}(nverse) {\bf  G}(alois) {\bf  P}(roblem).}$$ The book concludes by showing that  Serre's \OIT\ theorem -- dividing decomposition groups on a modular curve tower into two disjoint types, $\CM$ and $\GL_2$ -- has a formulation on any \MT. 

Characteristic $\tilde G$ quotients and attached modules  tells us about cusps, definition fields and  rational points on the tower levels over  $\sH(G,\bfC)^\rd$. From $\tilde G_\ab$ come the title's $\ell$-adic representations, guided by the \RIGP. 
\end{abstract} 

\setcounter{tocdepth}{3} 
\tableofcontents

\section{Changing from isogonies to sphere covers} \label{WhatGauss}  \S\ref{whyrationalfuncts} uses rational function covers of the sphere, $\prP^1_z$, in an inhomogeneous complex variable $z$. Then,   despite the use of such elementary objects, this generalizes to any covers of the sphere to produce all we need. The starting objects, {\sl Nielsen classes\/} attached to $(G,\bfC)$, of \cite{Fr18}:  $G$ a finite group, and $r$ conjugacy classes, $\bfC$, in $G$.  Thereby we introduce the basic moduli -- reduced {\sl Hurwitz\/} -- spaces, $\sH(G,\bfC)^\rd$ that are level 0 of a  ({\bf M}(odular) {\bf T}(ower)) of moduli spaces,  the subject of \cite{Fr18}. 

Before, however, going through our elementary approach, \S\ref{bual} goes explains why changing from isogenies of torii, to the braid action describing families of sphere covers, opens new territory to old problems. 

\subsection{Backing up a little} \label{bual} \MT s generalize modular curve towers. As is well-known, modular curves are moduli spaces for elliptic curves and their torsion, defined by congruence subgroups of $\SL_2(\bZ)$.  Less well known, they are the image of moduli for {\sl certain\/} covers of $\prP^1$. Further, the induced map on the moduli spaces is one-one, losing none of the abelian variety data.  

Seeing elliptic curves as moduli of dihedral group covers is in the spirit of \MT s. It connects problems studied by many directly to $$\text{Serre's {\bf O}(pen){\bf I}(mage){\bf T}(heorem). }$$ We remind of the history of two such abelian variety problems with totally independent application  in \S\ref{pre95} and \S\ref{95-04}. I illustrate  the format for our discussions, as applied to the \OIT\ using the symbol \sdisplay{\cite{Se68}}, indicating Serre's book. The year telegraphs that it is in \S\ref{pre95}. There a reader will find a setoff $\star$-display, here $\star$ {\cite{Se68}} $\star$, starting an elaboration  on how Serre's book relates to our topics.    

\MT s generalizes this particular connection in the same sense that general finite groups generalize dihedral groups. The territory opens up because of the {\sl automatic braid action\/} on Nielsen classes \S\ref{equivalences}. \S\ref{oitpre}, however, makes an elementary point, by \lq\lq backing up\rq\rq\ to the moduli of sphere covers. 

The rich territory of \MT s goes beyond the abelian monodromy action that dominates from congruence subgroups, though that remains a computational tool.  The towers that generalize modular curve towers work through the {\sl Universal $\ell$-Frattini cover\/} of a finite group. \S\ref{univfratpre} is a prelude to the Frattini central extensions that reveal new motivic aspects, and the canonical $\ell$-adic representations that appear in the title. 

\subsubsection{Prelude to generalizing the \OIT} \label{oitpre} In expanding to Serre's $\OIT$, rather than directly going from the space of 1-dimensional abelian varieties to the space of higher dimensional abelian varieties, the \MT\ method goes through spaces of curve covers. Yet, it still connects canonically  to data on their Jacobians through Frattini covers. The indirect route  expands to problems both more elementary and more advanced. 

Further, the direct method runs into problems from correspondences from curves that are invisible to the process, without a method to handle them. The indirect approach has two advantages.
 
 \begin{edesc} \label{wwd} \item \label{wwda} It gives tools from finite group theory and homological algebra to see the motivic pieces that contribute to expansions of the OIT. 
\item \label{wwdb} In using sphere covers as the moduli problem it connects to a wider range of classical problems than does the direct method. \end{edesc} 

In \eql{wwd}{wwda}, the motivic pieces are essentially synonymous with braid orbits on the Nielsen class. That is, without some examples,  even very enlightened mathematicians might not detect a substantial deformation difference between two covers of $\prP^1_z$ that have precisely the same group, permutation representation and conjugacy classes. 

Yet, such substantial deformation differences -- it is impossible to deform one-to-the-other by deforming their branch points (while keeping those separate) -- do occur, often significantly relevant to problems that go back a long way. The discussion on \sdisplay{\cite{Se90a}} and \sdisplay{\cite{Fr10}} mirrors an astonishingly long history,  back to Riemann. 

This doesn't, at first, seem to be what {\sl motivic\/} has often meant, subspaces of some $\ell$-adic cohomology separated by monodromy action. Yet, it is! Braid action just catches it quicker, at level 0 of a \MT. 

Often, as in the case we are citing it is caught homologically, by  the covers being attached to distinct elements of $H_2(G,\bZ)$. Even if you don't care about any \MT\ level except 0, that data is there, automatically. Distinct braid orbits are not yet easy to detect. Therefore much has gone into detecting them computationally, under the rubric called {\sl lift invariants\/} for which our discussion includes two examples: above, and the lift invariants attached to the main example extending the \OIT\ in \cite{Fr18}. In this latter case, the distinction in motivic pieces shows transparently. 

For \eql{wwd}{wwdb}, I refer to discussions in  \S\ref{earlyOIT-MT} and \S\ref{Frat-Groth} that connect {\sl involution realizations\/} of dihedral group on one hand, to problems with a large literature that one can explain to undergraduates at most US research universities. They give an alternative path to the great mysteries of algebra/analytic geometry, even as they show how one might forge progress. 

The first of these connects involution realizations with cyclotomic points on hyperelliptic jacobians. Further, \MT s completely and (relatively easily) generalizes that problem starting from essentially any finite group G and prime $\ell$ for which it is $\ell$-perfect (see \eqref{MTconds}).  This is as in \sdisplay{\cite{DFr94}}, also discussed in the \cite{Fr94} (reprinted in several languages by Serre). 

Further, that generalization starts with a statement about the \RIGP\ that is Modular Tower free. Yet, it  forces the existence of Modular Towers with a diophantine problem (no rational points at high levels), that generalizes no points on high modular curve levels. 
 
We can compute properties of the  tower levels from assiduous use of an Artin braid group quotient: the {\sl Hurwitz  monodromy group\/} $H_r$ (and its signficant subgroups). \S\ref{braids} defines \MT s. 

When $r=4$,  Thm.~\ref{genuscomp} computes a first property, the genus, of these 1-dimensional spaces that appear as $j$-line covers. Conspicuously, it requires  no details on a subgroup of $\PSL_2(\bZ)$ -- a congruence subgroup only in extremely special cases -- which defines the space as a quotient of the complex upper-half plane. This formula in one tool by which we prove properties of the 1-dimensional \MT s. 

\subsubsection{Universal $\ell$-Frattini covers} \label{univfratpre} 
An abstract for \cite{Fr18} (\S\ref{Fr18abstract}),  expands to explain the title, \lq\lq What Gauss told Riemann \dots ,\rq\rq\  of \S\ref{S1title}. That concludes in \S\ref{algvsanal}. Finally, \S\ref{Frat-Groth} introduces the analogy by which a Hurwitz space $\sH(G,\bfC)^\rd$ generalizes the moduli, $\sM_{\geng,r}$, of $r$ unordered marked points on curves of genus $\geng$.  With so many applications when $\geng =0$,  we stay with that case. The \MT\ idea works just as well in the general case. 

\begin{defn} \label{frattdef} A cover of groups, $\psi: H\to G$, is  {\sl Frattini\/} if, given a subgroup $H^*\le H$, should $\psi(H^*)=G$, then $H^*=H$. 
\end{defn} There is a Universal profinite group, $\tilde G$,  for the Frattini covering property. Further, for each prime $\ell$ dividing $|G|$, there is a universal ${}_\ell\tilde G$ for the Frattini property with kernel an $\ell$ group. A characteristic sequence  $\{\tfG \ell k\}_{k=0}^\infty$ of quotients of ${}_\ell \tilde G$ canonically defines a series of moduli space covers of $\sH(G,\bfC)^\rd$ when the elements of $\bfC$ are prime to $\ell$. This is elementary and explained in  \sdisplay{\cite{Fr95}} and \cite[App.~B]{Fr18}. 

Still, except when the $\ell$-Sylow of $G$ is normal, it is no easy guess as to what is $\fG \ell$, though its rank (minimal number of generators) is the same as that of $G$.  
\S\ref{explainingFrattini} describes several papers that were preludes to \cite{Fr18}. 

As in \S\ref{dragging}, denote the sphere, $\prP^1_z$, punctured at $\bz=\{\row z r\}$, by $U_\bz$, and its fundamental group (modulo inner conjugations) by $\pi_1(U_\bz)$.  Suppose $\ell$ is a prime for which $G$ has no $\bZ/\ell$ quotient: $G$ is {\sl $\ell$-perfect}.  Then,  \S\ref{modtowdef} defines a set of $\pi_1(U_\bz)$ quotients, $\sF_{G,\bfC,\ell}$, from $(G,\bfC)$ with image isomorphic to $\fG \ell$ (or of  $\tfG \ell {} {}_\ab$). It also defines an $H_r$ action on them.  

A cohomological condition must be satisfied to assure $\sF_{G,\bfC,\ell}$ is nonempty. A \MT\ on $\sH(G,\bfC)$ is an $H_r$ orbit in this action. It is also a projective system of Hurwitz space components. 

 \cite{Fr18} could not hold all that material, even if appropriate. Instead, \sdisplay{\cite{Fr18}}  relates Frattini covers and the Inverse Galois Problem. This paper describes Serre's original \OIT\ in \S\ref{pre95} in the discussion of \sdisplay{\cite{Se68}} and \sdisplay{\cite{Fr78}}, updated from the original papers with references to \cite{Fr05}, \cite{GMS03} and \cite{Se97b}. Especially, this gives background  on the problems that connected our Hurwitz space approach  to the \OIT. 

\cite{Fr18} is quite complete on the Universal Frattini cover itself, and especially the role of  the {\sl lift invariant}.  Nontrivial lift invariants arise from what group theorists call representation covers of $G$. They are also a detectible subset of central (in the notation of Def.~\ref{frattdef}, when $\ker(\psi)$  is in the center of $H$) Frattini covers. This set of ideas gives the main information we require to understand the \MT\ levels,  components, their cusps and why they are appropriate for generalizing the OIT.   

\subsection{Part I: Data for covers of the sphere} \label{whyrationalfuncts} 

Here is notation for a rational function: $f: \prP^1_w \to \prP^1_z: w \mapsto f(w)$:  $$f = f_1(w)/f_2(w),\text{ with $(f_1,f_2)=1$, $n=\deg(f)=\max(\deg(f_1),\deg(f_2)$. }$$
$$\text{For example: $\frac{w^3\np1}{w^5\nm3}$ has {\sl degree}, $\deg(f)$, 5.}$$  

{\sl Branch points}: Places $z'$ where distinct $w' \mapsto z'$, $\row {w'} t_{z'}$, have cardinality $t_{z'}<n$. Take $z'=0$. For simplicity assume $$\text{no $w'= \infty$ ($\deg(f_1)\ge \deg(f_2)$), and  the $f_i\,$s have leading coefficient 1.}$$ We write solutions, $w$, of $f(w)=z$, as analytic functions in a recognizable variable.  For $1\le k\le t$, write $f(w) = (w - w_k')^{e_k} m_k(w)$ with $m_k(w_k') \not = 0$. Form an expression in the variable $z^{1/e_k} = u_k(z)$: 
$$w_k(z^{1/e_k})\eqdef w_k' + u_k(z) + a_2 u_k(z)^2  + a_3 u_k(z)^3  \dots = w_k(u_k(z)), k = 1,\dots, t_{z'}. $$ 

Substitute $w_k(z^{1/e_k})\mapsto w$ in $f(w)=z$. Look at leading powers of $u_k(z)$ on the left and on the right. They are equal. Now solve inductively for $a_2, a_3, \dots$, so  the left side is identically equal to $z$. Easily verify these.  
\begin{edesc} 
\item The result for $w_k(z^{1/e_k})$ is analytic in a neighborhood of $u_k(z) = 0$. 
\item With $\zeta_{m}=e^{2\pi \sqrt{-1}/m}$, substitution(s) $u_k(z) \mapsto \zeta_{e_k}^j u_k(z)$, \\ $j= 1,\dots, e_k$,  give $e_k$ distinct solutions $w\in \bC((u_k(z)))/\bC((z))$.  \end{edesc} 
Take $\bar e\eqdef \bar e_{z'} = \lcm(\row e t_{z'})$. Now, put those solutions together. 

Write all $w_k(\zeta_{e_k}^jz^{1/e_k})\,$s, $k=1,\dots,t_{z'}$, as power series in $z^{1/\bar e}$:  
$$\text{ Substitute the obvious power of $z^{1/\bar e}$ for each  $u_k(z)$.}$$ 
This gives $n$ distinct solutions, $L_{z'}$, of $f(w) = z$ in the field $\bC((z^{1/\bar e}))$ and a natural permutation on $L_{z'}$ from the substitution  
\begin{equation} \label{powseries} \hat g_{z'}: z^{1/\bar e} \mapsto e^{2\pi i/\bar e} z^{1/\bar e}. \end{equation} 
This gives an element in  $S_n$ (the identify, if all $e_k\,$s are 1), the symmetric group, on the letters $L_{z'}$.\footnote{We are abusing notation by still having $z'=0$, but we are about to drop that by using $\row z r$ for a labeling of the branch points.}

Do this for each branch point, $\row {z'} r$, to get $\bg\eqdef (\row g r)$. 
\begin{edesc} \label{branchcyc} 
\item  \label{branchcyca}  How can we compare entries of $\bg$,  by having them all act on one set of letters, rather than on $r$ different sets, $L_{z_1},\dots, L_{z_r}$? 

\item  \label{branchcycb} With success on \eql{branchcyc}{branchcyca}, denote the group $\lrang{\bg}$ generated by $\bg$  by $G_f$. What can we use it and $\bg$ for? 
\item \label{branchcycc} Was there anything significant about using rational functions?  
\end{edesc} 

In this section I answer questions \eqref{branchcyc}. In \S\ref{deform}, with those answers, to assure $\bg$ are appropriate,  as a preliminary, we extend to do the following. 
\begin{equation} \label{deform} \text{Invert this, to get $\bg \mapsto f=f_\bg$, regardless of the branch points.}\end{equation}  

Now I produce $G_f$, up to isomorphism as a subgroup of $S_n$, answering \eql{branchcyc}{branchcyca}, first using {\sl Algebraic Geometry\/}, then using {\sl Analytic Geometry}. 

\subsubsection{Algebraic Geometry} \label{alggeom} We  introduce a compact Riemann surface cover, $\hat f:  \hat W \to \prP^1_z$, the {\sl Galois closure\/} of $f$, minimal with these properties:  
\begin{edesc} \label{galoisClos} \item $\hat f$ factors through $f$; and   
\item \label{galoisClosb} it is a {\sl Galois cover\/} of $\prP^1_z$. 
\end{edesc} The group of those automorphisms is $G_f$.  
Galois cover in \eql{galoisClos}{galoisClosb} means having $\deg(\hat f)$ automorphisms commutating with $\hat f$. 

Form the fiber product of $f$, $\deg(f)=n$ times (assume $n>1$): $$(\prP^1_w)_f^{(n)} \eqdef \{(w_1,\dots, w_n) \in (\prP^1_w)^n \mid f(w_1)=\cdots = f(w_n) \}\text{ over $\prP^1_z$.}$$ The resulting object is {\sl singular\/} if two coordinates, $w_k',w_l'$, lying over the same branch point $z_i,$ have both $e_k>1$ and $e_l > 1$. To see this, project onto the $(k,l)$ coordinates through the point $(w_k',w_l')\in \prP^1_w\times_{\prP^1_z}\prP^1_w$. Around $(w_k',w_l')$, this space (with its map $f$) is locally analytically isomorphic to $$\{ (w_k,w_l) \in D_{w'_k=0} \times_{D_{z'=0}} D_{w'_l=0} | w_k^{e_k} - w_l^{e_l} = 0 \}\to D_{z'=0}.$$  

The {\sl Jacobian criterion\/} reveals the singularity at $(0,0)$:  both partials $\pa{\ } {w_k},\pa{\ } {w_l}$ of $w_k^{e_k} - w_l^{e_l}$ are 0 at (0,0). Indeed,  normalizing this as a cover of $D_{z=0}$ results in $\gcd(e_k,e_l)$ copies of the cover $w \mapsto w^{\lcm(e_k,e_l)}$.\footnote{A disk, $D_{z'=0}$, around $z'=0$, is a convenient open set for us, as we see in \S\ref{braids}. Technically, that means someone has selected a metric on, say, $\prP^1_z$.} 

Denote the normalization by ${}^*W^{\{n\}}_f$. The normalization may have several components. One for certain is the {\sl fat diagonal}, $$\Delta_f^{\{n\}} =  \text{closure of the locus where 2 or more of those $w_i \,$s are equal.}$$ Remove the components of $\Delta_f^{\{n\}}$. On the result, there is a natural action of $S_n$, by permuting those distinct $w_i\,$ s, that extends to the whole normalized (since 1-dimensional, nonsingular) ${}^*W^{\{n\}}_f$. 

If ${}^*W^{\{n\}}_f$ is irreducible, then it is a Galois over $\prP^1_z$ with group $S_n$. If it is {\sl not\/} irreducible, consider a component, $\tilde f: \tilde W_f\to \prP^1_z$. Then, $$G_f = \{ g\in S_n | g \text{ preserves }  \tilde W_f\}, \text{ is the {\sl geometric\/} monodromy group.}$$  Automatically $|G_f |= \deg(\tilde  f)$. 

Denote the conjugacy class of $g\in G_f$ by $\C_g$. Though $\bg=(\row g r)$ still depends on how we labeled points over branch points, this approach does define the conjugacy classes $\C_{g_i}$, $i= 1,\dots, r$, in $G_f$: consider maps of the function field of $W_f$ into $\bC(((z-z_i)^{1/{\bar e_i}}))$ fixed on $\bC((z-z_i))$ and restrict the automorphism $\hat g_{z_i}$ of \eqref{powseries} to the image of that embedding. 

For many purposes, this construction is inadequate to that of \S\ref{analgeom}. Since, however, it is algebraic, it gives another group we need.  

\begin{defn} \label{arithmon}  If  the cover $f$ is defined over a field $K$, then $\hat G_{f,K}\eqdef \hat G_f$,   the {\sl arithmetic monodromy\/} of $f$ (over $K$), is defined exactly as above, except take $\hat f: \hat W \to \prP^1_z$ to be a component defined over $K$. \end{defn} That is, if we started with $\tilde W$, a geometric component, then we would take for $\hat f: \hat W \to \prP^1_z$, the union of the conjugates $\tilde f^\sigma: \tilde W^\sigma\to \prP^1_z$, $\sigma \in G_K$. This would be defined and irreducible over $K$. 

\subsubsection{Analytic Geometry}  \label{analgeom} For $g\in S_n$ with $t$ disjoint cycles, its {\sl index\/} is  $\ind(g)=n \nm t$. 
For $\bfC$, $r$ conjugacy classes -- some may be repeated, count them with multiplicity -- in a group $G$, use $\bg\in G^r\cap \bfC$ to mean  an $r$-tuple $\bg$ has entries in {\sl some\/} order (with correct multiplicity) in $\bfC$. We denote the group the entries generate by $\lrang{\bg}$. 

\begin{exmp} If $G=S_4$, and $\bfC=\C_{2^2}\C_{3^2}$ consists of two repetitions each of the class, $\C_2$, of 2-cycles, and the class, $\C_3$, of 3-cycles, then  both $${}_1\bg=((1\,2), (2\,3\,4),(3\,4), (1\,3\,4))\text{ and } {}_2\bg=((2\,3\,4),(1\,3),(1\,3),(3\,2\,4)) $$ are in  $(S_4)^4\cap \bfC$. Both satisfy \eql{bcycs}{bcycsa}, but only ${}_2\bg$ satisfies \eql{bcycs}{bcycsb}. \hfill $\triangle$
\end{exmp} 

Our next approach to the Galois closure, based on Thm.~\ref{BCYCs}, gives us a better chance to answer questions \eql{branchcyc}{branchcyca}-\eql{branchcyc}{branchcycc}.  To simplify notation, unless otherwise said, always make these two assumptions. 
\begin{edesc} \label{assumptions} \item  \label{assumptionsa} Conjugacy classes, $\bfC=\{\row \C r\}$, in $G$ are {\sl generating}. 
\item \label{assumptionsb} $G$ is given as a transitive subgroup of $S_n$.  \end{edesc} 
 Meanings: \eql{assumptions}{assumptionsa} $\implies$ the full collection of elements in $\bfC$ generates $G$; and \eql{assumptions}{assumptionsb} $\implies$ the cover generated by $\bg$ is connected. Even with \eql{assumptions}{assumptionsa}, it may be nontrivial to decide if there is $\bg \in G^r\cap\bfC$ that generates.  

\begin{thm} \label{BCYCs} Assume  $\bz\eqdef \row z r\in \prP^1_z$ distinct. Then, some degree $n$ cover $f: W \to \prP^1_z$ with branch points $\bz$, and $G=G_f\le S_n$ produces  classes  $\bfC$  in $G$, if and only if there is $\bg\in G^r\cap \bfC$ with these properties:  
\begin{edesc} \label{bcycs}   \item  \label{bcycsa}   $\lrang{\bg} = G$ ({\sl generation}); and 
\item  \label{bcycsb}   $\prod_{i=1}^r g_i = 1$ ({\sl product-one}). 
\end{edesc} 
Indeed, $r$-tuples satisfying \eqref{bcycs} give all possible Riemann surface covers -- both up to equivalence  (see \S\ref{equivalences})-- with these properties. 

Refer to one of those covers attached to $\bg$ as $f_\bg: W_\bg \to \prP^1_z$. 
\begin{equation} \label{RH} \text{The genus $\geng_{\bg}$ of $W_\bg$ appears in } 2(\deg(f) \np \geng_\bg -1) = \sum_{i=1}^r \ind(g_i). \end{equation}  \end{thm}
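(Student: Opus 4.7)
The plan is to prove the two directions simultaneously through the covering-space correspondence on the punctured sphere $U_\bz = \prP^1_z \setminus \bz$, and then to derive the genus formula from Riemann--Hurwitz.

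For the ($\Rightarrow$) direction, fix a base point $z_0 \notin \bz$ and a standard bouquet of simple loops $\gamma_1,\dots,\gamma_r$ at $z_0$, each $\gamma_i$ enclosing only $z_i$. Running the power-series analytic continuation of \eqref{powseries} globally along $\gamma_i$ (not just locally around $z_i$) promotes $\hat g_{z_i}$ to a permutation $g_i$ of the $n$ sheets of $f$ over $z_0$. The cycle type of $g_i$ is fixed by the local ramification data $(e_1,\dots,e_{t_{z_i}})$, so after relabelling $g_i$ lies in the prescribed class $\C_i$. The standard presentation
\begin{equation*}
\pi_1(U_\bz, z_0) = \langle \gamma_1,\dots,\gamma_r \mid \gamma_1\cdots\gamma_r = 1\rangle
\end{equation*}
immediately forces $\prod_i g_i = 1$, giving \eql{bcycs}{bcycsb}. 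Irreducibility of the cover makes $G_f$ act transitively on the fiber, so $\langle \bg\rangle$ is transitive; combined with the definition of $G_f$ in \S\ref{alggeom}--\S\ref{analgeom} as the image of monodromy, this gives $\langle \bg\rangle = G$, which is \eql{bcycs}{bcycsa}.

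For the ($\Leftarrow$) direction, given $\bg$ satisfying \eqref{bcycs}, the product-one relation lets the assignment $\gamma_i \mapsto g_i$ extend to a homomorphism $\varphi \colon \pi_1(U_\bz, z_0) \to G \le S_n$. Transitivity of $G$ on $\{1,\dots,n\}$ from \eql{assumptions}{assumptionsb} yields a connected unramified $n$-sheeted topological cover $W^\circ \to U_\bz$ classified by $\varphi$. Pull back the complex structure of $U_\bz$ to make $W^\circ$ a Riemann surface and $W^\circ \to U_\bz$ a holomorphic unramified cover. To compactify, work locally above each $z_i$: on a small disk $D_i$ around $z_i$ the preimage $(W^\circ) \cap f^{-1}(D_i \setminus \{z_i\})$ breaks into one punctured disk per disjoint cycle of $g_i$, and a cycle of length $e$ is filled in by gluing a disk via $w \mapsto w^e = z - z_i$. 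The result is the asserted $f_\bg \colon W_\bg \to \prP^1_z$ with monodromy data $(G,\bfC)$, and the uniqueness half of the covering-space correspondence shows every cover with these invariants arises this way. The genus formula is then Riemann--Hurwitz applied to $f_\bg$: if $g_i$ has $t_i$ disjoint cycles, the fiber over $z_i$ contributes $\sum_j(e_{i,j}-1) = n - t_i = \ind(g_i)$ to the ramification divisor, so
\begin{equation*}
2\geng_\bg - 2 = -2\deg(f) + \sum_{i=1}^r \ind(g_i),
\end{equation*}
which rearranges to \eqref{RH}.

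The main obstacle is the analytic step in the converse: upgrading the purely topological covering $W^\circ \to U_\bz$ to a holomorphic cover of the compact surface $\prP^1_z$. Local filling by $w \mapsto w^e$ is obvious, but rigorously this rests on Riemann's existence theorem --- that any finite topological cover of a Riemann surface carries a unique compatible complex structure and extends canonically across isolated punctures with prescribed local ramification. In this one-variable sphere setting the argument is elementary, yet it is the conceptual heart of the correspondence between group-theoretic data $\bg$ and geometric covers $f_\bg$.
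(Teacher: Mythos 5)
Your proof takes essentially the same route as the paper: classical generators of $\pi_1(U_\bz, z_0)$ with their single product-one relation give the branch cycles in one direction, covering-space theory plus filling the punctures over each $z_i$ gives the converse, and Riemann--Hurwitz yields the genus formula. You also correctly flag the same nontrivial step the paper does --- extending the purely topological cover of $U_\bz$ across the punctures to a holomorphic cover of $\prP^1_z$ --- which the paper defers to \cite[Chap.~4]{Fr80} rather than carrying out in the text.
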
 

The set of $\bg$ satisfying \eqref{bcycs} are the Nielsen classes associated to $(G,\bfC)$. A Riemann surface $W_\bg$  is isomorphic to $\prP^1_w$ over $\bC$ if and only if formula \eqref{RH} -- {\sl Riemann-Hurwitz\/} --  gives $\geng_\bg=0$. 

Denote $\prP^1_z\setminus \{\bz\}$ as $U_{\bz}$ and choose $z_0\in U_{\bz}$. Thm.~\ref{BCYCs} follows from existence of {\sl classical generators\/} of $\pi_1(U_{\bz},z_0)$. These are paths $\sP=\{\row P r\}$ on $U_\bz$ based at $z_0$, of form $\lambda_i\circ\rho_i\circ \lambda^{-1}_i$ with these properties. 
\begin{edesc} \label{generatorspi} \item  $\rho_i\,$s are non-intersecting clockwise loops around the respective $z_i\,$s. 
\item The $\lambda_i\,$s go from $z_0$ to a point on $\rho_i$. 
\item  Otherwise there are no other intersections.  
\item The $\row \lambda r$ emanate clockwise from $z_0$. 
\end{edesc}

Suppose a cover, $f:W\to\prP^1_z$, has a labeling of the fiber $w_{1}^\star,\dots w_{n}^\star$ over $z_0$. Then, analytic continuation of a lift, $P^\star_{k,i}$, of  $P_i$, starting at $w_{k}^\star$ will end at a point which we call $w_{(k)g_i}^\star$, on $k\in \{1,\dots,n\}$. 

This produces the permutations $\row g r$ satisfying \eqref{bcycs}.  This results from knowing \eqref{generatorspi} implies $\row P r$  are generators of $\pi(U_\bz,z_0)$, and  
\begin{equation} \label{fundgp} \text{they have product 1 and {\sl no other relations\/}.} \end{equation} 
From \eqref{fundgp}, mapping $P_i\mapsto g_i$, $i=1,\dots,r$, produces a permutation representation $\pi(U_\bz,z_0)\to G\le S_n$. 

From the theory of the fundamental group, this gives a degree $n$ cover $f^0: W^0\to U_\bz$. 
Completing the converse to Thm.~\ref{BCYCs} is not immediate. You must fill in the holes in $f_0$ to get the desired $f: W\to \prP^1_z$. A full proof, starting from from \cite{Ahl79}, is documented in \cite[Chap.~4]{Fr80}. 

\begin{defn} \label{NielsenClass} Given $(G,\bfC)$, the set of $\bg$ satisfying \eqref{bcycs} is the {\sl Nielsen class\/}  $\ni(G,\bfC)^\dagger$, with $\dagger$ indicating an equivalence relation referencing the permutation representation $T: G \to S_n$. \end{defn}
A cover doesn't include an ordering its branch points. Adding such an ordering would destroy most applications number theorists care about. This makes sense of saying {\sl a cover is in the {\sl Nielsen class\/} $\ni(G,\bfC)^\dagger$}. 

\begin{rem}[Permutation notation] \label{permnot} Our usual assumptions start with a faithful transitive permutation representation $T: G\to S_n$ with generating conjugacy classes $\bfC$, from which we may define a Nielsen class $\ni(G,\bfC)^\dagger$. 

Or, if $T$ comes from a cover $f:W\to \prP^1_z$, denote the permutation presentation by $T_f$ even when applied to $\hat G_f$. Then, denote the group of $\hat W/W$ by $\hat G_f(1)$. That indicates it is the subgroup stabilizing the integer 1 in the representation, and $G_f(1)=\hat G_f(1)\cap G_f$. \end{rem} 

\subsection{Part II: Braids and deforming covers} \label{braids}  

Consider $U_r$, subsets of $r$ distinct unordered elements $\{\bz\}\subset \prP^1_z$: 
\begin{equation}\label{discriminant}  U_r=\text{projective $r$-space $\prP^r$ minus its {\sl discriminant locus\/} ($D_r$).} \end{equation}  
Take ${}_0\bz$ to be a basepoint of $U_r$, and denote $\pi_1(U_r,{}_0\bz)$, the Hurwitz monodromy group, by $H_r$. A {\sl Hurwitz space\/} is a cover of $U_r$ that parametrizes all covers in a Nielsen class. \S\ref{dragging} explains how it comes from a representation of $H_r$ on a Nielsen class. 

\subsubsection{Dragging a cover by its branch points} \label{dragging}  Here is the way to think of forming a Hurwitz space. 
Start with  ${}_0f: {}_0W\to \prP^1_z$, with branch points ${}_0\bz$, classical generators ${}_0\bP$ and (branch cycles) ${}_0\bg\in \ni(G,\bfC)$.  

Drag ${}_0\bz$ and ${}_0\bP$, respectively,  to ${}_1\bz$  and ${}_1\bP$ along any path $B$ in $U_r$. 
With no further choices, ${}_t\sP\mapsto {}_0\bg$ forms a trail of covers  
${}_tf: {}_tW\to \prP^1_z$, $t\in [0,1]$, $$\text{{\sl with respect to the same ${}_0\bg$ along the path indicated by the parameter.}}$$ 

This produces a collection of $\prP^1_z$ covers of cardinality $|\ni(G,\bfC)^\dagger|$ over every point of $U_r$, forcing upon us a decision.  
For $B$ closed, denote the homotopy class $[B]$ as $q_B\in H_r$. 

\begin{princ} For $B$ be a closed path, identify branch cycles ${}_1\bg$ for the cover ${}_1f: {}_1W\to \prP^1_z$ lying at the end of the path, relative to the original classical generators ${}_0\bP$ from ${}_0\sP \mapsto ({}_0\bg)q_B^{-1}$. \end{princ}

Here are key points going back to \cite[\S4]{Fr77}. 
 \begin{edesc} \label{keys} 
\item \label{keysa}  {\sl Endpoint of the Drag}: A cover at the end of $B$ is still in $\ni(G,\bfC)^\dagger$; it depends only on the homotopy class of $B$ with its ends fixed.  
\item \label{keysb}  {\sl $H_r$ orbits}: (Irreducible) components of spaces of covers in $\ni(G,\bfC)^\dagger$ correspond to  braid ($H_r$) orbits. 
\end{edesc}  

Whatever the problem application, we must be able to identify the Galois closure of the cover. The key  ambiguity is in labeling $\bw^\star=\{w_1^\star,\dots,w_n^\star\}$, points lying over $z_0$. Changing that labeling changes $T: G\to S_n$. A slightly subtler comes from changing $z_0$. There is a distinction between them. Changing $z_0$ to $z^*_0$ is affected by rewriting the $z_i$-loops as \begin{equation} \label{baseptch} \lambda^*\circ\lambda\circ\rho\circ \lambda^{-1}\circ(\lambda^*)^{-1},  \text{ with $\lambda^*$ a path from }z_0^* \text{ to }z_0.\end{equation}

\subsubsection{Braids and equivalences} \label{equivalences} It is immediately helpful having a natural set of generators of $H_r$ giving their action on $\ni(G,\bfC)$. 

\begin{edesc} \label{Hrgens} \item  \label{Hrgensa} $H_r$ is generated by two elements: 
 $$\begin{array}{rl} q_i: & \bg\eqdef (\row g r)  \mapsto (\row   g 
{i-1},g_ig_{i+1}g_i^{-1},g_i,g_{i+2},\dots, g_r); \\ 
\sh:  &\bg\mapsto (g_2,g_3,\dots,g_r,g_1) \text{ and } H_r  \eqdef \lrang{q_2,\sh} \text{ with }\\ &\sh\,q_i\ \sh^{-1} = q_{i\np1},\   i=1,\dots,r\nm 1. \end{array}$$ 
\item \label{Hrgensb} From braids, $B_r$, on $r$ strings we get $H_r=B_r/\lrang{q_1\cdots q_{r\nm1}q_{r\nm1}\cdots q_1}$. \end{edesc}    

The case $r=4$ in \eql{Hrgens}{Hrgensa} is so important in examples, that in  {\sl reduced\/} Nielsen classes, we conveniently refer to $q_2$ as the {\sl middle twist}. As usual, in notation for free groups modulo relations,  \eql{Hrgens}{Hrgensb} means to mod out by the normal subgroup generated by the relation $q_1\cdots q_{r\nm1}q_{r\nm1}\cdots q_1=R_H$.

\begin{princ} From \eqref{Hrgens}, we get a permutation represention of $H_r$ on $\ni(G,\bfC)^\dagger$. Given $\dagger$, that gives a cover $\Phi\eqdef \Phi^{\dagger}: \sH(G,\bfC)^\dagger \to U_r$: The Hurwitz space of $\dagger$-equivalences of covers. 

The elements in $\lrang{q_1\cdots q_{r\nm1}q_{r\nm1}\cdots q_1}$ have the affect  \begin{equation} \label{inneraction} \begin{array}{c} \bg\in \ni(G,\bfC)\mapsto g\bg g^{-1}\text{ for some }g\in G. \text{ Indeed, for} \\ 
 \bg\in \ni(G,\bfC), \{(\bg)q^{-1}R_Hq\mid q\in B_r\}=\{g^{-1}\bg g\mid g\in G\}.\end{array} \end{equation}  \end{princ}

Denote the subgroup of the normalizer, $N_{S_n}(G)$, of $G$ in $S_n$ that permutes a given collection, $\bfC$, of conjugacy classes, by $N_{S_n}(G,\bfC)$.  Circumstances dictate when we identify  covers $f_u: {}_uW\to\prP^1_z$, $u=0,1$, branched at ${}_0\bz$, obtained from any one cover using the dragging-branch-points principle. 
Two equivalences that occur on the Nielsen classes:   
\begin{edesc} \label{eqname}  \item  \label{eqnamea} {\sl Inner}: $\ni(G,\bfC)^\inn\eqdef \ni(G,\bfC)/G$ corresponding to \eqref{inneraction}. \item  \label{eqnameb} {\sl Absolute}: Form $\ni(G,\bfC)^\abs\eqdef \ni(G,\bfC)/N_{S_n}(G,\bfC)$. 
\end{edesc} One might regard {\sl Inner\/} (resp.~{\sl Absolute}) equivalence as {\sl minimal\/} (resp.~{\sl maximal}). Act by $H_r$ on either  equivalence (denoted by a $\dagger$ superscript). 

\begin{defn}[Reduced action] \label{redaction} A  cover $f: W\to \prP^1_z$ is {\sl reduced\/} equivalent to $\alpha\circ f: W\to \prP^1_z$ for $\alpha\in \PSL_2(\bC)$. \end{defn} 

Also,  $\alpha$ acts  on $\bz\in U_r$ by acting on each entry. That extends to an action on any cover $\Phi^\dagger: \sH(G,\bfC)^\dagger\to U_r$, giving a reduced Hurwitz space cover: 
\begin{equation} \label{reducedcover}  \Phi^{\dagger,\rd}: \sH(G,\bfC)^{\dagger,\rd} \to U_r/\PSL_2(\bC)\eqdef J_r.\end{equation}

\subsubsection{Genus formula for $r=4$} \label{genus}  When $r=4$, $U_r/\PSL_2(\bC)$ identifies with $\prP^1_j\setminus\{\infty\}$. 
A reduced Hurwitz space of 4 branch point covers is a natural $j$-line cover.  That completes to $\overline \sH(G,\bfC)^{\dagger,\rd} \to \prP^1_j$ ramified over $0, 1, \infty$. Denote the group $\lrang{q_1q_3^{-1},\sh^2}$ by $\sQ''$. 

\cite[\S4.2]{BFr02} contains the formula whose statement in Thm.~\ref{genuscomp} uses   
$$\text{\sl reduced Nielsen classes\ } \ni(G,\bfC)^{\dagger,\rd}\eqdef \ni(G,\bfC)^\dagger/\sQ''. $$

\begin{thm} \label{genuscomp} Suppose a component, $\overline{\sH'}$, of $\overline \sH(G,\bfC)^{\dagger,\rd}$ is given by a braid orbit, $O$, on the corresponding Nielsen classes $\ni(G,\bfC)^{\dagger,\rd}$. Then, the ramification, respectively over $0,1,\infty$, of $\overline {\sH'}\to \prP^1_j$ is given by the disjoint cycles of $\gamma_0=q_1q_2$, $\gamma_1=q_1q_2q_1$, $\gamma_\infty=q_2$ acting on $O$. 

The genus, $g_{\overline {\sH'}}$, of $\overline{\sH'}$, a la Riemann-Hurwitz,  appears from  $$2(|O|+g_{\bar \sH'}-1)=\ind(\gamma_0)\np \ind(\gamma_1)\np \ind(\gamma_\infty).$$ \end{thm}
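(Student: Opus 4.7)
The plan is to compute $\overline{\sH'}\to\prP^1_j$ as a $j$-line cover by (i) identifying the base $J_4$ as $\prP^1_j\setminus\{\infty\}$, (ii) pinning down the monodromy around the three singular $j$-values as specific elements of $H_4/\sQ''$, and (iii) invoking Riemann--Hurwitz.

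For (i) I would use the identification $J_4=U_4/\PSL_2(\bC)=\prP^1_j\setminus\{\infty\}$ already recorded in the text. Restricting $\Phi^{\dagger,\rd}$ to a braid orbit $O$ yields a connected finite cover of $J_4$ minus the ramification locus, classified by the action of $\pi_1$ of the base on $O$ that factors through $H_4/\sQ''$. By normalization (equivalently, the Riemann existence theorem), this cover extends uniquely to a finite branched cover $\overline{\sH'}\to\prP^1_j$, ramified only above $0, 1, \infty$, with ramification given by the cycle structures of the three monodromy loops.

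The hard part is step (ii): identifying the three monodromy loops in $H_4/\sQ''$ as $q_1q_2$, $q_1q_2q_1$, $q_2$ respectively. This is the content of \cite[\S4.2]{BFr02}. I would approach it by showing that $H_4/\sQ''$ surjects onto the orbifold fundamental group of $\prP^1_j$ with cone orders $(3,2,\infty)$ at $(0,1,\infty)$ --- equivalently, onto $\PSL_2(\bZ)$ in its standard $j$-line presentation. The relations $q_1\equiv q_3$ and $\sh^2\equiv 1$ imposed in passing to the $\sQ''$-quotient are precisely those generated geometrically by the $\PSL_2(\bC)$-action on $U_4$ (the first from the ordering ambiguity after fixing three branch points, the second from the symmetry of a $4$-point configuration fixed by a $180^\circ$-rotation). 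The matching of the three specific braid elements with the three orbifold loops is then done by an explicit homotopy argument, degenerating the cross-ratio $\lambda$ of the four branch points toward $0, 1, \infty$ and tracking which braid product encodes the resulting twist on Nielsen classes. Consistency of the orbifold orders --- $\gamma_0$ acts on $\ni(G,\bfC)^{\dagger,\rd}$ with order dividing $3$, $\gamma_1$ with order dividing $2$, $\gamma_\infty$ arbitrary --- can be checked directly from the sphere relation $R_H=q_1q_2q_3q_3q_2q_1$ together with $q_1\equiv q_3$, and serves as a useful sanity check.

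Given (i) and (ii), the conclusion is immediate. The points of $\overline{\sH'}$ above $j=0$ (resp.\ $1, \infty$) correspond bijectively to the disjoint cycles of $\gamma_0$ (resp.\ $\gamma_1, \gamma_\infty$) on $O$, with local ramification index equal to each cycle's length. Using that for any permutation $\gamma$ on $O$ the sum $\sum(e_P-1)$ over cycles equals $\ind(\gamma)$, Riemann--Hurwitz applied to the degree $|O|$ cover $\overline{\sH'}\to\prP^1_j$ of a target of genus $0$ gives
\[
2(|O|+g_{\overline{\sH'}}-1)=\ind(\gamma_0)+\ind(\gamma_1)+\ind(\gamma_\infty),
\]
as claimed.
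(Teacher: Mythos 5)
Your proposal is correct and follows essentially the same route the paper takes: defer the geometric identification of the three monodromy loops with $\gamma_0=q_1q_2$, $\gamma_1=q_1q_2q_1$, $\gamma_\infty=q_2$ to \cite[\S4.2]{BFr02}, use the Hurwitz braid relation $R_H$ together with $q_1\equiv q_3$ and $\sh^2\equiv 1$ to establish the product-one relation and the orbifold orders ($\gamma_0^3=1$, $\gamma_1^2=1$ on reduced Nielsen classes), and then apply Riemann--Hurwitz to the degree $|O|$ cover of $\prP^1_j$. The paper's accompanying remark makes exactly the sanity check you suggest (deriving $\gamma_0\gamma_1\gamma_\infty=1$ and $\gamma_0^3=1$ from $R_H$ with $q_1=q_3$ and the braid relations), so your reasoning and the paper's are in agreement.
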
 

\begin{rem} Notice that $\gamma_1\gamma_2\gamma_3=1$ (product-one) is a conseqence of the Hurwitz braid relation $$q_1q_2\cdots q_{r\nm1}q_{\nm1}q_{r\nm2}\cdots q_1=R_H$$ combined for $r=4$ with modding out by $q_1=q_3$. Also, that immediately gives $\gamma_0^3=1$ in its action on reduced Nielsen classes. Hint: Use also the braid relations $q_iq_{i\np1}q_i=q_{i\np1}q_iq_{i\np1}$. \end{rem} 

\begin{rem}[Braid orbits] \label{braidorbits1} From Thm.~\ref{genuscomp} it becomes clear quickly that identifying the braid orbits, $O$, in the Nielsen class is crucial. See, for example, Rem.~\ref{braidorbits2} on the \MT\ from the level 0 Nielsen class \eql{mainexs}{mainexsc} as the prime $\ell$ changes in \cite{FrH15} or \cite[\S 5]{Fr18}. \end{rem} 

We now do one example to illustrate aspects of the genus calculation for Thm.~\ref{genuscomp}. Especially we show the theory is in place to compute. \cite{BFr02} is our main source for the theory and other examples illustrating the theory, purposely chosen to show on one full example of a \MT, of what one might care about if these were modular curves, though they are not. Level 0 of the \MT\ for that example is designated $\ni(A_5,\bfC_{3^4})$. It  has just one braid orbit, unlike the example we now do which has two. 

This example happens to be level 0 for the prime $\ell=2$ of our example illustrating how to generalize Serre's \OIT\  \sdisplay{\cite{FrH15}}.  It is $\ni(A_4,\bfC_{+3^2-3^2})^{\inn,\rd}$, which has a rational union of conjugacy classes, thereby defining Hurwitz spaces over $\bQ$ from the \BCL\ (Thm.~\ref{bclthm}).  Before doing the example, I list what to expect from it. 

\begin{edesc} \label{A44} \item \label{A44a} The Hurwitz space has two components, labeled $\sH_0^{\pm}$, that we will see clearly using the $\sh$-incidence matrix \sdisplay{\cite{BFr02}}. 
\item \label{A44b} The Hurwitz spaces have fine moduli, but neither component has fine {\sl reduced\/} moduli (criterion of \cite[Prop.~4.7]{BFr02}). 
\item \label{A44c} The genus of both components is 0.   
\item \label{A44d} Neither of the components is a modular curve, but we can compute their arithmetic and geometric monodromy as  $j$-line covers. \end{edesc} 

Comment on \eql{A44}{A44b}: Fine moduli for inner Hurwitz spaces in this case comes from $A_4$ having no center. The check for fine moduli on a braid orbit $O$ for the reduced version has two steps \cite[\S4.3.1]{BFr02}: $\sQ''$ must act as a Klein 4-group (called b(irational)-fine moduli);  and neither $\gamma_0$ nor $\gamma_1$ has fixed points (on $O$). 

Comment on \eql{A44}{A44c}: To apply the argument of \sdisplay{\cite{Fr06}} to conclude Main \MT\ conj.~\ref{mainconj} for this \MT\ requires going to higher levels to assure the component genuses rise beyond 1. Each component has 2-cusps (labeled respectively $O^4_{1,1}$ and $O^4_{1,4}$). It requires more work to estabish the explicit rise of genus, but this is the crucial hypothesis of  \cite[\S5]{Fr06}. 

Subdivide $\mapsto \ni(A_3,\bfC_{\pm 3^2})^{\inn,\rd}$ according to the 
sequences of conjugacy classes $\C_{\pm 3}$;   $q_1q_3^{-1}$ and $\sh$ switch these rows: 
$$\begin{tabular}{cccc} &[1] +\,-\,+\,- &[2] +\,+\,-\,- &[3] +\,-\,-\,+ \\
&[4] -\,+\,-\,+ &[5] -\,-\,+\,+ &[6] -\,+\,+\,-
\end{tabular} $$

We limit the rest of this example to displaying the two components, and their genuses.  Here is the sh-incidence matrix notation for cusps, labeled $O_{i,j}^k$: $k$ is the cusp width, and $i,j$
corresponds to a labeling of orbit representatives. 
 
 Consider $g_{1,4}=((1\,2\,3), (1\,3\,4), (1\,2\,4), (1\,2\,4))$. Its $\gamma_\infty$ orbit $O_{1,4}^4$ is what Thm.~\ref{level0MT} calls double identity (repeated elements in positions 3 and 4). There are also two other double identity cusps with repeats in positions 2 and 3, denoted $O_{3,4}^1$ and $O_{3,5}^1$. The following elements are in a Harbater-Mumford component \eqref{HMrep}. 
 $$\begin{array}{rl} \text{H-M rep.} \mapsto \bg_{1,1}=& ((1\,2\,3),
(1\,3\,2), (1\,3\,4), (1\,4\,3))\\
\bg_{1,3}=& ((1\,2\,3), (1\,2\,4), (1\,4\,2), (1\,3\,2))\\
\text{H-M rep.} \mapsto  \bg_{3,1}=& ((1\,2\,3), (1\,3\,2), (1\,4\,3), (1\,3\,4))
\end{array} $$ 

\begin{table}[h] \label{sh-incA4}
\begin{tabular}{|c|ccc|}  \hline $\ni_0^+$ Orbit & $O_{1,1}^4$\ \vrule  &
$O_{1,3}^2$\ \vrule & 
$O_{3,1}^3$ \\ \hline $O_{1,1}^4$ 
&1&1&2\\ 
$O_{1,3}^2$ &1 &0&1  \\ $O_{3,1}^3$ &2&1&0 \\  \hline \end{tabular} 
\ 
\begin{tabular}{|c|ccc|} \hline $\ni_0^-$ Orbit & $O_{1,4}^4$\ \vrule  &
$O_{3,4}^1$\ \vrule & 
$O_{3,5}^1$ \\ \hline $O_{1,4}^4$ 
&2&1&1\\ 
$O_{3,4}^1$ &1 &0&0  \\ $O_{3,5}^1$ &1&0&0 \\  \hline \end{tabular} \end{table}

\begin{prop} \label{A43-2} On $\ni(\Spin_4,\bfC_{\pm3^2})^{\inn,\rd}$
(resp.~$\ni(A_4,\bfC_{\pm3^2})^{\inn,\rd}$)
 $H_4/\sQ''$ has one (resp.~two) orbit(s). So,  $\sH(\Spin_4,\bfC_{\pm3^2})^{\inn,\rd}$ 
 (resp.~\!$\sH(A_4,\bfC_{\pm3^2})^{\inn,\rd}$) has one (resp.~two)  component(s),  
 $\sH_{0,+}$ (resp.~$\sH_{0,+}$ and $\sH_{0,-}$). 

Then, $\sH(\Spin_4,\bfC_{\pm3^2})^{\inn,\rd}$ maps one-one to 
$\sH_{0,+}$ (though changing $A_4$ to $\Spin_4$ give different moduli).
The compactifications of $\sH_{0,\pm}$ both have genus 0 from Thm.~\ref{genuscomp} (Ex.~\ref{exA43-2}).\end{prop}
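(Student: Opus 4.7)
The plan is to split the statement into three mutually independent computations on explicit Nielsen-class representatives: (i) count $H_4/\sQ''$-orbits by the block structure of the $\sh$-incidence matrix, (ii) compare $\ni(A_4, \bfC_{\pm 3^2})^{\inn,\rd}$ with $\ni(\Spin_4, \bfC_{\pm 3^2})^{\inn,\rd}$ via the lift invariant attached to the central Frattini cover $\Spin_4\to A_4$, and (iii) apply Theorem~\ref{genuscomp} to each orbit.

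First, enumerate $\ni(A_4,\bfC_{\pm 3^2})^{\inn,\rd}$. For each of the six class-orderings [1]--[6] in the table, product-one plus generation together with $A_4$-conjugation and $\sQ''=\lrang{q_1q_3^{-1},\sh^2}$ pin down a finite list of representatives; the data in the text already names the three H--M representatives $\bg_{1,1},\bg_{1,3},\bg_{3,1}$ and the three representatives $\bg_{1,4},\bg_{3,4},\bg_{3,5}$. Group them into $\gamma_\infty=q_2$-orbits to get the cusps $O^k_{i,j}$ of widths $k$, then apply $\sh$ to each cusp element to tabulate the two $3\times 3$ $\sh$-incidence matrices displayed. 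Because each matrix is irreducible (no row/column permutation splits it into a block-diagonal form with a zero off-diagonal block), each matrix represents a single $H_4/\sQ''$-orbit, giving exactly two components $\sH_{0,\pm}$ of $\sH(A_4,\bfC_{\pm 3^2})^{\inn,\rd}$.

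Next, for the $\Spin_4$ statement, use the lift invariant: each $3$-cycle of $A_4$ has a unique preimage of order $3$ in $\Spin_4$ (the other preimage has order $6$), and for $\bg\in\ni(A_4,\bfC_{\pm 3^2})$ the product of the order-$3$ lifts lies in $\ker(\Spin_4\to A_4)\simeq \bZ/2$. This invariant is $H_r$-stable, hence constant on each braid orbit. On the H--M element $\bg_{1,1}=((1\,2\,3),(1\,3\,2),(1\,3\,4),(1\,4\,3))$ each consecutive pair is $(g,g^{-1})$, so its unique order-$3$ lifts multiply to $1$ and the invariant is trivial: $\sH_{0,+}$ lifts. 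A direct computation on $\bg_{1,4}$ yields the nontrivial kernel element, so $\sH_{0,-}$ has no lift. Therefore $\sH(\Spin_4,\bfC_{\pm 3^2})^{\inn,\rd}$ maps to $\sH_{0,+}$, and the map is one-to-one because, with the order-$3$ lift class fixed, each tuple in the $+$ orbit has a unique $\Spin_4$-lift; the moduli nevertheless differ because inner equivalence for $\Spin_4$ is strictly finer than for $A_4$.

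Finally apply Theorem~\ref{genuscomp}. For $\sH_{0,+}$, $|O^+|=4+2+3=9$ and $\ind(\gamma_\infty)=3+1+2=6$. The braid relations together with $q_1=q_3$ on reduced classes force $\gamma_0^3=1$ (the Remark following Theorem~\ref{genuscomp}) and $\gamma_1^2=1$ (because, under $q_1=q_3$, the Hurwitz relation $R_H=q_1q_2q_3q_3q_2q_1$ becomes $\gamma_1^2$), so $\ind(\gamma_0)$ is twice the number of $3$-cycles of $\gamma_0=q_1q_2$ and $\ind(\gamma_1)$ is the number of $2$-cycles of $\gamma_1=q_1q_2q_1$. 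Evaluating these on the nine representatives and substituting into $2(9+g_{0,+}-1)=\ind(\gamma_0)+\ind(\gamma_1)+6$ forces $g_{0,+}=0$; the analogous computation on $\sH_{0,-}$ with $|O^-|=6$ and $\ind(\gamma_\infty)=3$ yields $g_{0,-}=0$. The main obstacle is the bookkeeping in step one: verifying the precise off-diagonal entries of the $\sh$-incidence matrices requires careful tracking of inner conjugation each time $\sh$ is applied to a cusp representative; once those matrices are established, the orbit count, the lift-invariant check, and the Riemann--Hurwitz arithmetic are all routine.
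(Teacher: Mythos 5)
Your proposal follows the paper's route almost exactly: (i) read the two braid orbits off the block structure of the $\sh$-incidence matrix, (ii) identify $\sH_{0,+}$ as the lift-invariant-trivial orbit via the unique order-$3$ lift into $\Spin_4$, and (iii) feed the cusp widths and the fixed-point counts of $\gamma_0,\gamma_1$ into Thm.~\ref{genuscomp}. Your supplementary observations that $\gamma_1^2=1$ (from $R_H$ under $q_1=q_3$) and $\gamma_0^3=1$ on reduced classes are correct and pin down the indices $\ind(\gamma_0^+)=6$, $\ind(\gamma_1^+)=4$, $\ind(\gamma_0^-)=4$, $\ind(\gamma_1^-)=3$, reproducing the paper's arithmetic $2(9+g_+-1)=16$ and $2(6+g_--1)=10$.

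One sentence in your step (ii) is wrong, though it does not damage the conclusion. You write that ``inner equivalence for $\Spin_4$ is strictly finer than for $A_4$.'' It is not: the kernel of $\Spin_4\to A_4$ is central, so conjugation by $\Spin_4$ factors through $A_4$, and $\Spin_4$-inner equivalence on lifted tuples coincides with $A_4$-inner equivalence. That coincidence is in fact what makes the lift map a bijection on inner reduced classes. The reason the moduli nevertheless differ --- the point the paper's parenthetical flags --- is not a finer equivalence but a different moduli interpretation: a point of $\sH(\Spin_4,\bfC_{\pm3^2})^{\inn,\rd}$ represents a Galois cover with group $\Spin_4$, while the isomorphic point of $\sH_{0,+}$ represents the associated $A_4$-cover (obtained by passing to the $\bZ/2$-quotient). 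Same curve over $U_4/\PSL_2(\bC)$, different family of objects it parametrizes.
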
 

The diagonal entries for 
$O_{1,1}^4$ and $O_{1,4}^4$ are nonzero. In detail, however, $\gamma_1$
(resp.~$\gamma_0$) fixes 1 (resp.~no) element of $O_{1,1}$, and neither of $\gamma_i$, $i=0,1$, fix any
element of $O_{1,4}^4$. 

\begin{exmp}[Computing the genus] \label{exA43-2} Use  $(\gamma_0,\gamma_1,\gamma_\infty)$ from the $\sh$-incidence
calculation in Prop.~\ref{A43-2}. Denote their restrictions to lifting
invariant $+1$  (resp.~-1) orbit  
 by $(\gamma_0^+,\gamma_1^+,\gamma_\infty^+)$ (resp.~$(\gamma_0^-,\gamma_1^-,\gamma_\infty^-)$).
We read indices of the $+$ (resp.~$-$) elements from the $\ni_0^+$ (resp.~$\ni_0^-$) matrix block: 
Cusp  widths over
$\infty$ add to the degree $4+2+3=9$ (resp.~$4+1+1=6$) to give
$\ind(\gamma_\infty^+)=6$ (resp.~$\ind(\gamma_\infty^+=3$); since $\gamma_1^+$ (resp.~$\gamma_1^-$) has 1
(resp.~no) fixed point and $\gamma_0^\pm$ have no fixed points, $\ind(\gamma_1^+)=4$
(resp.~$\ind(\gamma_1^-)=3$) and
$\ind(\gamma_0^+)=6$ (resp.~$\ind(\gamma_0^+)=4$). The genus of $\bar \sH_{0,\pm}$ is $g_{\pm}=0$:  
 $$2(9+g_+-1)=6+4+6=16\text{  and }2(6+{g_-}-1)=3+3+4=10.$$
\end{exmp}

\subsubsection{Defining \MT s} \label{modtowdef}  
Let $\psi_i:H_i\to G$, $i=1,2$, be Frattini covers (Def.~\ref{frattdef}). 

\begin{lem} \label{fiberprod} A minimal (not necessarily unique) subgroup $H\le H_1\times_G H_2$ that  is surjective to $G$, is a Frattini cover of $G$ that factors surjectively to each $H_i$. Thus, Frattini covers of $G$ form a projective system. From their definition, taking a Frattini cover of a group preserves the rank. \end{lem}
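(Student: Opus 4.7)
My plan is to observe that the Frattini property for $H \to G$ falls out of minimality almost tautologically, then show surjectivity to each factor using the Frattini property of $\psi_i$, and finally deduce the projective system and rank claims.

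\textbf{Step 1 (Frattini from minimality).} Let $\pi_i : H_1 \times_G H_2 \to H_i$ be the coordinate projections, and let $\psi : H \to G$ denote the map induced by either $\psi_i \circ \pi_i$ (they agree on the fiber product). If $H^* \le H$ satisfies $\psi(H^*) = G$, then $H^*$ is itself a subgroup of $H_1 \times_G H_2$ surjecting onto $G$. By the minimality defining $H$, we cannot have $H^* \subsetneq H$, hence $H^* = H$. This is exactly Def.~\ref{frattdef}, so $\psi : H \to G$ is Frattini. In the profinite category, existence of such a minimal $H$ follows from Zorn's lemma applied to the poset of closed subgroups of $H_1 \times_G H_2$ surjecting onto $G$ (any chain has nonempty intersection which still surjects onto $G$ by compactness).

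\textbf{Step 2 (Surjectivity onto each $H_i$).} Set $H_i' := \pi_i(H) \le H_i$. Because $\psi = \psi_i \circ \pi_i$ on $H$, we have $\psi_i(H_i') = \psi(H) = G$. Since $\psi_i$ itself is Frattini, the hypothesis $\psi_i(H_i') = G$ forces $H_i' = H_i$. Thus $\pi_i|_H$ is surjective, giving the asserted factorization $H \twoheadrightarrow H_i \twoheadrightarrow G$ for $i=1,2$.

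\textbf{Step 3 (Projective system).} Given any two Frattini covers $H_1,H_2$ of $G$, Steps 1--2 produce a third Frattini cover $H$ mapping onto both. The collection of isomorphism classes of Frattini covers of $G$ is therefore directed under the \lq\lq factors through\rq\rq\ relation, which is precisely what is needed to form the inverse limit that gives $\tilde G$ of \S\ref{univfratpre}.

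\textbf{Step 4 (Rank preservation).} Let $r = \rk(G)$ and choose $\row g r$ generating $G$. Since $\psi : H \to G$ is surjective, lift each $g_i$ to $h_i \in H$ and let $H^* = \langle \row h r \rangle$. Then $\psi(H^*) \supseteq \langle \row g r \rangle = G$, so by the Frattini property $H^* = H$. Hence $\rk(H) \le r$. The reverse inequality $\rk(H) \ge \rk(G)$ is immediate from surjectivity of $\psi$, so $\rk(H) = \rk(G)$.

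The argument is essentially formal; the only mildly delicate point is guaranteeing that a \emph{minimal} surjecting subgroup exists in the profinite setting, which I expect to be the main (but still routine) obstacle and is handled by the Zorn's-lemma remark in Step 1.
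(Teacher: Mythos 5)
Your proof is correct and its core argument (your Step 2, projecting $H$ to $H_i$ and invoking the Frattini property of $\psi_i$ to conclude $\pr_i(H)=H_i$) is exactly the paper's proof. The paper leaves Steps 1, 3, and 4 implicit; your additions are the natural and correct way to fill them in, and the Zorn/compactness remark on existence of a minimal $H$ matches the paper's later Tychonoff remark about constructing $\tilde G$ inside $\tilde F_t$.
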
 

\begin{proof} Consider the projection $\pr_i: H\le H_1\times_G H_2 \to H_i$, $i=1,2$.   Then, $\pr_i(H)$ is a subgroup of $H_i$ that maps surjectively to $G$. Since $\psi_i$ is a Frattini cover, $\pr_i(H)=H_i$, $i=1,2$. \end{proof} 

Also, Frattini covers of perfect groups are perfect.  Key for Frattini covers is that $\ker(\psi)$ is nilpotent \cite[Lem.~20.2]{FrJ86}${}_{1}$ or \cite[Lem.~22.1.2]{FrJ86}${}_{2}$. 

Write $\ker(\psi)=\prod_{\ell ||G|} \ker(\psi)_{\ell}$ indicating the product is over its $\ell$-Sylows. For each $\ell$, quotient by all Sylows for primes other than $\ell$ dividing $\ker(\psi)$. Thus form ${}_\ell\psi: {}_\ell H \to G$. 

The fiber product of the ${}_\ell H\,$s over $G$ equals $H$.  \cite[App.~B]{Fr18}  discusses elementary structural statements about the construction of $\tilde G$ below. Some version of these appear in \cite[Chap.~22]{FrJ86}${}_2$. 

\begin{defn} \label{univFrattDef} This produces a profinite cover, the {\sl Universal Frattini cover}, $\tilde \psi_G: \tilde G \to G$. Similarly,   $$\tilde \psi_{G,\ab}: \tilde G/ [ \ker( \tilde \psi_{G}), \ker( \tilde \psi_{G})] \eqdef \tilde G_\ab \to G$$ is the Universal  {\sl Abelianized\/} Frattini cover of $G$. \end{defn} 

Then, $\tilde \psi_G$ is a minimal projective object in the category of finite groups covering $G$.  So, given any profinite group cover $\psi: H \to G$, some homomorphism $\tilde \psi_{G,H}$ to $H$ factors through $\psi$. If $\psi$ is a Frattini cover, then $\tilde \psi_{G,H}$ must be a Frattini cover, too.

If $\rk(G)=t$, you may construct $\tilde G$ using a pro-free group, $\tilde F_t$, on the same (finite) number of generators. Send its generators to generators of $G$ to give a cover, $\tilde F_t\to G$. Then, define $\tilde G$ to be a minimal (closed) subgroup of $\tilde F_t$ covering $G$. 

This, though is nonconstructive.  It uses the Tychynoff Theorem: a nested sequence of closed subgroups of $\tilde F_t$ covering $G$ has non-empty intersection covering $G$.  That also explains why it wasn't sufficient to replace $\tilde F_t$ by the free (rather than pro-free) group on $t$ generators. 

The following quotients of $\tilde G$ are more accessible and extremely valuable for forming results and conjectures. Their existence follows  from decomposing the (pro-)nilpotent kernel $\ker(\tilde \psi)$ into a product of its $\ell$-Sylows. 

\begin{defn} \label{univFrattdef-ab} For each prime $\ell| |G|$, there is a profinite Frattini cover ${}_\ell \tilde \psi_{G}: {}_\ell \tilde G \to G$ with $\ker({}_\ell\tilde \psi_{G})$ a  profree pro-$\ell$ group of finite rank, $\rk(\fG \ell)$.  There are similar such covers with ${}_\ell\tilde \psi_{G,\ab}$ replacing ${}_\ell\tilde \psi_{G}$. \end{defn} 

The Frattini subgroup of an $\ell$-group $H$ is the (closed) subgroup generated by $\ell$-th powers and commutators from $H$. Denote it by ${}_{\text{\rm fr}}H$. 

Recover a cofinal family of finite quotients of $\fG \ell$ through the Frattini kernel of the natural map $1\to \ker_0 \to \fG \ell \to G\to 1$. This produces what we call the sequence of {\sl characteristic kernels\/} of $\fG \ell$: \begin{equation} \label{charlquots} \ker_0> {}_{\text{\rm fr}} \ker_0\eqdef  \ker_1 \ge \dots \ge  {}_{\text{\rm fr}}\ker_{k{-}1} \eqdef \ker_k \dots\end{equation}   Denote  $\fG \ell/\ker_k$ by $\tfG \ell k$, and $\ker_k/\ker_{k'}$ by ${}_\ell M_{k,k'}$ or  $M_{k,k'}$ for $k'\ge k$. \begin{equation} \label{charZlGmod} \text{Especially, ${}_\ell M_G\eqdef {}_\ell M_{0,1}$ is the characteristic $\bZ/\ell[G]$ module.} \end{equation}

Given $(G,\bfC,\ell)$, define the Nielsen classes ${}_\ell\ni(G,\bfC)$ of a modular tower in the following way that extends, in a profinite sense that of an ordinary Nielsen class (Def.~\ref{NielsenClass}). 

Denote the free group $\pi(U_{\bz_0}, z_0)$ modulo inner automorphisms, by $\sG_{\bz_0}$. Then, consider $\psi_{\bg}: \sG_{\bz_0} \to G$ given by mapping classical generators \eqref{generatorspi}, $\sP$,  to the branch cycles $\bg\in \ni(G,\bfC)$. 

Now form all possible homomorphisms $\psi_{\tilde \bg}: \sG_{\bz_0} \to \fG \ell$ through which $\psi_{\bg}$ factors, indicating images of the classical generators, $\sP$, by $\tilde \bg$, that satisfy this additional condition: 
\begin{equation} \text{$\tilde g_i$ has the same order as $g_i$, $1,\dots, r$.} \end{equation} 
From Schur-Zassenhaus,  as $\fG \ell\to G$ has kernel an $\ell$-group, this defines the conjugacy class of $\tilde g_i$ uniquely. With no loss, also label it $\C_i$, $i=1,\dots,r$. 

This makes sense of writing $\ni(\fG \ell,\bfC)^\dagger$ (or $\ni(\fG \ell {}_\ab,\bfC)^\dagger$) with $\dagger$ any one of the equivalences we have already discussed in \S\ref{equivalences}. As previously $H_r$ acts on the Nielsen classes. To define the Nielsen class levels, mod out successively, as in Def.~\ref{univFrattDef}, on $\fG \ell$ by the characteristic kernels of \eqref{charlquots}. Then, $H_r$ acts on all of these Nielsen classes. 

\begin{defn}[\MT\ Def] \label{MTdef} Assume $O$ is an  $H_r$ orbit on $\ni(\fG \ell,\bfC)^\dagger$ (resp.~$\ni(\fG \ell {}_\ab,\bfC)^\dagger$). Then, the level $k$ Nielsen class is the image orbit ${}^kO$ of $O$ in $\ni(\tfG \ell k, \bfC)^\dagger$ (resp.~$\ni(\tfG \ell k {}_\ab,\bfC)^\dagger$), with its corresponding Hurwitz space.  \end{defn} 

\section{The path to {\bf M}(odular){\bf T}(ower)s}  \label{overview} \S\ref{S1title} elaborates on \lq\lq What Gauss told \dots.\rq\rq\ a hidden history that has obscured the nonabelian aspects of {\bf R}(iemann)'s{\bf E}(xistence){\bf T}(heorem). \S\ref{Fr18abstract} is an abstract for \cite{Fr18}. While the universal Frattini cover of any finite group $G$ allows launching into such non-abelian aspects should you dare, \cite{Fr18} takes a middle road. 

The applications that extend the \OIT\ and feature the $\ell$-adic monodromy come at the end of the book. The applications that feature the \RIGP\ and the interpretation of that problem as interpreted by rational points on Hurwitz spaces come at the beginning of the book. The middle of the book joins those two areas, by emphasizing the universal Frattini cover $\tilde G$, and braid orbits on Nielsen classes.  

Suppose, you accede to taking on a serious simple group (say, $A_5$ and the prime $\ell=2$), and a very small $\ell$-Frattini cover. For example, as in \cite[Chap.~9]{Se92},  the short exact sequence $0 \to \bZ/2 \to \Spin_5\to A_5\to 1$. Then you might want to recognize the sequence for the abelianized $2$-Frattini cover \begin{equation} \label{A52}  0\to (\bZ_2)^5\to {}_2\tilde A_{5,\ab}\to A_5\to 1 \text{ \cite[Prop.~2.4]{Fr95}} \end{equation} (and its characteristic quotients) as quite a challenge at the present time,\footnote{ $\Spin_5\to  A_5$ is the smallest nontrivial quotient of ${}_2^1 A_5\to A_5$, as in \eqref{charlquots}.} even according to Conj~\ref{MTconj} if you only had to find {\sl any\/} number field $K$ for which all those characteristic quotients have \RIGP\ realizations over $K$.

We emphasize the value of the case  $r=4$. This plays on modular curve virtues, as being upper half-plane quotients. This, although congruence subgroups do not define the \MT\ levels except when $G$ is close to dihedral. 

\subsection{Guiding examples} \label{guides} 
\S\ref{explainingFrattini} explains the logic of the book, with extended abstracts on several papers on \MT s, starting with the first in 1995. The theme is what I learned from four examples. 

\begin{edesc} \label{mainexs} \item \label{mainexsa} Abel's spaces  as level 0 of a \MT\ classically denoted $ \{X_0(\ell^{k\np1})\}_{k=0}^\infty$. 
\item \label{mainexsb} The \MT\ from the Nielsen class $\ni(A_5,\bfC_{3^4})$ and the prime $\ell=2$. 
\item \label{mainexsc} The \MT\ system, from the Nielsen classes $\ni((\bZ/\ell)^2\xs\,\bZ/3, \bfC_{3^4})$, running over primes $\ell$, as my foray into an \OIT\ beyond Serre's.\footnote{Precisely what to do about $\ell=3$, is tricky. (See \sdisplay{\cite{FrH15}} starting with \eqref{3vs2}.)} 
\item \label{mainexsd} Relating $ \{X_1(\ell^{k\np1})\}_{k=0}^\infty$ and  $ \{X_0(\ell^{k\np1})\}_{k=0}^\infty$ is a special case of a general relation between {\sl inner\/} and {\sl absolute\/} Hurwitz spaces. 
\end{edesc} 

In \eql{mainexs}{mainexsa} and \eql{mainexs}{mainexsd}, there is a parameter $k$ indicating a tower level. Since in these two cases, the tower levels are traditionally related to a power of a prime $\ell$, I have assumed no more explanation of $k$ as a level is required. 

The inbetween cases, \eql{mainexs}{mainexsb} and \eql{mainexs}{mainexsc}, especially in the former, the levels would be less obvious. Yet, as with all \MT s, there is a prime $\ell$ and a level $k$, by which $\ell^{k\np1}$, $k\ge 0$ is naturally attached to that level. 

My initial relation with the \OIT, during my first decade as a mathematician, was based on my interpretation of \eql{mainexs}{mainexsa} as a moduli space. This went through several stages on very practical problems with considerable literature, on which this paper elaborates. The two series in \eql{mainexs}{mainexsd} are reasonably considered the mainstays of modular curves. 

My interactions with Serre on \cite{Se90a} and \cite{Se90b}, before they were written, and related to my review of \cite{Se92} (see \cite{Fr94})  before it appeared  caused me to go deeply into \eql{mainexs}{mainexsb}. Indeed, it was \cite{Fr90} that Serre saw me present in Paris in 1988, that had him write to me asking -- essentially -- for the formula for the lift invariant for the families of genus 0 covers whose Nielsen classes are given as $\ni(A_n,\bfC_{3^{n-1}})$, $n\ge 4$. 

For that reason, I have alluded to \cite{Fr12} in the discussions. Especially, applied to a {\sl braid orbit\/}  on a Nielsen class, 
$$\text{the idea of the lift (braid) invariant  from a {\sl central  Frattini cover}.} $$  The comparison between general and central Frattini covers of a finite group appears in many places to interpret \MT s.   

I aimed with \eql{mainexs}{mainexsc} to show commonalities and differences from the source of Serre's \OIT\ \eql{mainexs}{mainexsa}. Especially I refer to what works for modular curves, and what I learned that applies even to modular curves, though not previously observed, or the approach is different/illuminating. That is the concluding topic of \cite{Fr18}. Therefore I am brief on it here, merely recording some of its results that show what is new from an example that goes beyond Serre's \OIT. 

\subsection{What Gauss told Riemann about Abel's Theorem}  \label{S1title}  The title is the same as that of the paper \cite{Fr02}.  Since, among all upper half-plane quotients so many study modular curves, it must be shocking that most upper half-plane quotients -- $j$-line covers ramified over $0,1,\infty$ -- are not modular curves. One lesson from ${}_\ell \sX_0\eqdef \{X_0(\ell^{k\np1})\}_{k=0}^\infty$  came from Galois. 

He computed the geometric monodromy of $X_0(\ell)\to \prP^1_j$ (over $\bC$; $k=0$). finding it to be $\PSL_2(\bZ/\ell)$ which is simple for $\ell\ge 5$. As an example of his famous theorem:  {\sl radicals\/} don't generate the algebraic functions describing modular curve covers of the $j$-line. 

\subsubsection{An abstract  for \cite{Fr18}} \label{Fr18abstract}

\begin{center} {\large Monodromy, $\ell$-adic representations \\ and the  Inverse Galois Problem}   \end{center} 

This book connects to, and extends, key themes in two of Serre's books:

\begin{edesc} \label{serreBooks} \item  \label{serreBooksa} \emph{Topics in Galois theory} (the original and enhanced reviews \cite{Fr94}, and in French, translated by Pierre D\`ebes \cite{D95}); and
\item \label{serreBooksb} \emph{Abelian $\ell$-adic representations and Elliptic Curves\/} (see the 1990 review by Ken Ribet \cite{Ri90}). \end{edesc} 
It illustrates a general relation between $\ell$-adic representations, as in generalizing Serre's {\bf O}(pen){\bf I}(mage){\bf T}(heorem), and the {\sl Regular version\/} of the Inverse Galois Problem. These come together over the {\bf M}(odular){\bf T}(ower) generalization of modular curves. 

\cite[\S 4]{Fr18} explains MTs (started in 1995) as a program motivated by such a relationship. \cite[ \S 1 and \S 2]{Fr18} includes exposition that ties up threads of work coming just before \MT s. Especially, it recasts \cite{FrV91} and \cite{FrV92} to  modernize investigating definition fields of components of Hurwitz spaces vis-a-vis lift invariants with examples. 

Then, we interpret the \OIT\ in a generality not indicated by Serre's approach. The discussion \sdisplay{Fr15}, connects to the technical paper behind this material.   \cite[\S 5]{Fr18} uses one example -- in  that all modular curves are one example -- that is clearly not of modular curves. This explains why our approach to (families of) covers of the projective line can handle a barrier noted by Grothendieck to generalizing the \OIT. \cite[\S 3]{Fr18} joins the 1st and last 3rd of the book, in a new approach to the {\sl lift invariant \/} and Hurwitz space components based on the {\sl Universal Frattini cover\/} of a finite group. 

\sdisplay{\cite{Se90a}} explains the bifurcation point in \MT s as it applies to any particular finite group $G$.   
\begin{edesc} \label{bifurcation} \item \label{bifurcationa} Using the {\sl lift invariant\/} to \lq\lq freely\rq\rq\ construct Hurwitz spaces over $\bQ$ for some $G^*$ covering $G$  with one connected component; vs 
\item \label{bifurcationb} Using the universal Frattini cover $\tilde G\to G$  to construct towers of spaces based on $G$ that generalize modular curve towers. \end{edesc} 

A sense of the difference between \eql{bifurcation}{bifurcationa} and \eql{bifurcation}{bifurcationb} for $G=A_5$ appears in the difference between $\Spin_5\to A_5$ and the whole sequence \eqref{A52}. 

\subsubsection{Early  \OIT\ and \MT s relation} \label{earlyOIT-MT}  For any field $K$, denote the $K$ points on a (quasi-projective) algebraic variety $V$ by $V(K)$. 

The \MT\ description of $X_0(\ell)$ in \eql{mainexs}{mainexsa} shows they are the absolute reduced Hurwitz spaces $\sH(D_{\ell^{k\np1}},\bfC_{2^4})^{\abs,\rd}$ with $D_{\ell^{k\np1}}$ the {\sl dihedral group\/} of order $2\cdot \ell^{k\np1}$, $\ell$ odd, and $\bfC_{2^4}$ four repetitions of the involution (order 2) conjugacy class. There were two stages in this recognition. 

{\sl Stage 1}: \cite[\S2]{Fr78} starts the observation of the close relation between Serre's \OIT\ and the description of {\sl Schur covers\/} $f\in \bQ(w)$: \begin{equation} \label{schurcover} \!\!\!\text{$f: \prP^1_w(\bZ/p) \to \prP^1_z(\bZ/p)$ is one-one for $\infty$-ly many $p$.}\end{equation}  Similarly over any number field $K$ replacing $\bQ$, $f\in K(w)$, and residue class fields $\sO_K/\bp=\bF_\bp$ replacing $p$: 
\begin{equation} \label{schurcovernf} \!\!\!\text{$f: \prP^1_w(\sO_K/\bp) \to \prP^1_z(\sO_K/\bp)$ is one-one for $\infty$-ly many $\bF_\bp$.}\end{equation} 
The Galois closure of such an $f$ may only be defined over a proper extension $\hat K/K$. Indeed,  for $f$ to be a Schur cover over $K$, we must have $\hat K\not= K$.   

\newcommand{\exc}{{\text{\rm Exc}}}
Use the permutation notation of Rem.~\ref{permnot},  $T_f: \hat G_f\to S_n$ and the respective stabilizers of 1 by $\hat G_f(1)$, $G_f(1)$. 

To prevent accidents, define the exceptional set (for the Schur property): $$\exc_{f,K}=\{\bp \mid f \text{ is one-one on $\infty$-ly many extensions of }\bF_\bp\}.$$

If $f_i$, $i=1,2$, are exceptional over $K$, then  \begin{equation} \label{capexc} \text{so is $f_1\circ f_2$, if $|\exc_{f_1,K}\cap \exc_{f_2,K}| = \infty$.}\end{equation} That is, $f_1\circ f_2=f$ is a decomposition of $f$ over $K$.  The condition that $f$ is {\sl indecomposable\/}  over $K$ is that $T_f: \hat G_f\to S_n$ is {\sl primitive}: There is no group $H$ properly between $\hat G_f(1) $ and  $\hat G_f$.

\begin{prob}[Schur Covering]  Find all {\sl exceptional \/} $f$ {\sl indecomposable\/} over $K$. \end{prob}  

What made it possible to see the connection of exceptional $f$ with the \OIT, is that Thm.~\ref{schurcoverfiber} gives a complete characterization of exceptionality based on the relation between the arithmetic and geometric monodromy of the covers given by $f$.  

\begin{thm} \label{schurcoverfiber} The following is equivalent to exceptionality. 

There exists $\hat g \in \hat G_f(1)$, such that each orbit of $\lrang{G_f(1),\hat g}$ on $\{2,\dots,n\}$   breaks into (strictly) smaller orbits under $G_f(1)$ \cite[Prop.~2.1]{Fr78}. \end{thm}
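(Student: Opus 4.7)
The strategy is to translate exceptionality into a statement about fixed points of Frobenius via two Chebotarev inputs, and then rephrase the resulting coset condition on $\hat G_f$ in terms of a single element $\hat g \in \hat G_f(1)$ by elementary permutation-group reasoning.

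For a prime $\mathfrak{p}$ of good reduction, $f: \mathbb{P}^1_w(\mathbb{F}_\mathfrak{p}) \to \mathbb{P}^1_z(\mathbb{F}_\mathfrak{p})$ is a self-map of a finite set of fixed size, so bijectivity is equivalent to every $z_0$ having exactly one rational preimage. The number of rational preimages at an unramified $z_0$ equals $|\mathrm{Fix}(\hat g_{z_0})|$ for a Frobenius class $\hat g_{z_0}\in \hat G_f$ acting on the degree-$n$ fiber, and all such $\hat g_{z_0}$ sit in the single coset $\sigma_\mathfrak{p} G_f$, where $\sigma_\mathfrak{p}$ is the image of the global Frobenius in $\hat G_f/G_f = \mathrm{Gal}(\hat K/K)$. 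Combining (a) Chebotarev density for $K$ (as $\mathfrak{p}$ varies, $\sigma_\mathfrak{p}$ runs over every coset with positive density) with (b) the function-field Chebotarev / Lang--Weil estimate applied to the cover $\hat W\to \mathbb{P}^1_z$ over $\mathbb{F}_\mathfrak{p}$ (for fixed $\mathfrak{p}$ of large residue characteristic, $\hat g_{z_0}$ equidistributes over the conjugacy classes of $\hat G_f$ contained in $\sigma_\mathfrak{p} G_f$ as $z_0$ varies over unramified $\mathbb{F}_\mathfrak{p}$-points), we conclude: $f$ is exceptional iff some coset $\sigma G_f \subset \hat G_f$ has every element fixing exactly one point of $\{1,\dots,n\}$.

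Now produce the single $\hat g$. Conjugating inside $\hat G_f$ so that a representative of $\sigma G_f$ fixes $1$, we may take $\sigma = \hat g \in \hat G_f(1)$. Since $G_f$ is normal in $\hat G_f$ and acts transitively on $\{1,\dots,n\}$, conjugation by $G_f$ preserves the coset $G_f\hat g$ and moves fixed points $G_f$-transitively. Hence every $G_f$-conjugacy class in the coset contains an element fixing $1$, necessarily of the form $h\hat g$ with $h\in G_f(1)$; the coset condition then reduces to: no $h\hat g$ with $h\in G_f(1)$ has a fixed point on $\{2,\dots,n\}$. The averaging identity $\sum_{h\in G_f}|\mathrm{Fix}(h\hat g)| = |G_f|$ upgrades this back to every element of the coset having exactly one fixed point. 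Rewriting, $\hat g(i)$ never lies in the $G_f(1)$-orbit of $i$ for $i\ge 2$. Since $\hat g$ normalizes $G_f(1)$ (both sit in $\hat G_f(1)$ and $G_f \triangleleft \hat G_f$), $\hat g$ permutes the $G_f(1)$-orbits on $\{2,\dots,n\}$; the condition says no orbit is fixed, which is precisely: every $\langle G_f(1),\hat g\rangle$-orbit strictly decomposes into at least two $G_f(1)$-orbits.

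The main obstacle is the effective function-field Chebotarev input (b): one needs every conjugacy class in $\sigma_\mathfrak{p} G_f$ to be represented by some unramified $z_0 \in \mathbb{P}^1(\mathbb{F}_\mathfrak{p})$ for all sufficiently large $\mathfrak{p}$, which is Lang--Weil for the Galois cover $\hat W\to \mathbb{P}^1_z$ with explicit dependence on the constants of the cover. The bookkeeping — excluding finitely many ramified primes, bounding the branch-point contribution to $\mathbb{P}^1(\mathbb{F}_\mathfrak{p})$, and pigeonholing the finite set of cosets in the forward direction — is standard; the permutation-group equivalences in the last step are elementary once one knows $\hat g$ normalizes $G_f(1)$.
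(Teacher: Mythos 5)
Your proposal is correct and follows essentially the route the paper has in mind (citing \cite[Prop.~2.1]{Fr78} and \cite[Prop.~2.3]{Fr05} rather than reproving it): a two-layer Chebotarev argument (over $K$, then over the finite residue fields) reduces exceptionality to the existence of a coset $\hat g G_f \subset \hat G_f$ every element of which fixes exactly one letter, followed by elementary permutation-group bookkeeping using that $\hat g$ normalizes $G_f(1)$ to convert the coset condition into the orbit-breaking statement. Your group-theoretic chain ``no $h\hat g$, $h\in G_f(1)$, fixes a letter in $\{2,\dots,n\}$'' $\Longleftrightarrow$ ``$\hat g$ fixes no $G_f(1)$-orbit'' $\Longleftrightarrow$ ``each $\lrang{G_f(1),\hat g}$-orbit splits into $\ge 2$ $G_f(1)$-orbits'' is exactly right, and you correctly use both the conjugation argument (to kill $\ge 2$ fixed points) and the averaging identity (to kill $0$ fixed points).

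One caution about the step you label ``bookkeeping.'' The passage from the good-coset condition to \emph{exact} injectivity on $\prP^1(\bF_q)$ -- what the paper calls \emph{monodromy precision} -- is more than a Lang--Weil error estimate: an $O(\sqrt q)$ bound on extra collisions is not zero. What actually closes the gap is that the collision variety $(\prP^1_w\times_{\prP^1_z}\prP^1_w\setminus\Delta)/S_2$ has, under the coset condition, no absolutely irreducible $\bF_q$-component, so its $\bF_q$-points lie on the finite singular locus (over branch points) and contribute $O(1)$ rather than $O(\sqrt q)$; combined with the exact count $|\prP^1_w(\bF_q)|=|\prP^1_z(\bF_q)|$, one forces genuine bijectivity for the appropriate infinite set of $q$. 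Your sketch gestures at this (``explicit dependence on the constants of the cover'') but it is worth flagging that this is the one nonroutine step, and it is precisely why the paper highlights the result as a Chebotarev application that gives an \emph{exact} conclusion rather than an asymptotic one.
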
  

Thm.~\ref{schurcoverfiber} holds for essentially any cover (absolutely irreducible over $K$; the  sphere need not be the domain)  \cite[Prop.~2.3]{Fr05}. This application of a wide ranging Chebotarev density theorem is a case of  {\sl monodromy precision\/}. Usually a {\sl Chebotarev density\/} application in the $\implies$ direction isn't so precise. Yet, here instead of saying $f$ is {\sl almost\/} one-one, it implies it is exactly one-one for $\infty$-ly many $\bp$. 

\cite{Fr05} expanded on situations giving monodromy precision. This is as an improvement on the intricate, and somewhat endless process of refining the appropriate error term in the Riemann hypothesis over finite fields. 

To relate to the \OIT\ for rational $f$, it turned out enough to concentrate on two cases with $\ell$ prime: $\deg(f)=\ell$, or $\ell^2$. The next step was to describe those Nielsen classes that produce the exceptional $f$. \cite[Thm.~2.1]{Fr78} lists the Nielsen classes (of genus 0 covers) for $\deg(f)=\ell$ that satisfy these conditions, noting a short list of 3-branch point cases, with a main case of $r=4$ branch points forming one connected family.  

Thereby, it identifies $X_0(\ell^{k\np1})$ (resp.~$X_1(\ell^{k\np1})$)  as natural reduced absolute (resp.~inner) Hurwitz spaces as in \eql{mainexs}{mainexsd}. This was a special case of \cite[\S3]{Fr78}, the {\sl extension of constants\/} rubric for covers, by going to their Galois closure. See the additional discussion under \sdisplay{\cite{Fr78}}. The moduli interpretation is incoded in the  cover sequence  $$\sH(G,\bfC)^\inn\to \sH(G,\bfC)^{\abs} \to U_r$$  and its expansion into total spaces (over $U_r\times \prP^1_z$) as exploited in \cite[Thm.~1]{FrV91}.  This codified -- in moduli terms -- the relation between $G_f$ (geometric) and $\hat G_f$ (arithmetic) monodromy in the language of \S\ref{whyrationalfuncts}. \cite[\S2]{Fr78} shows, for prime degree $\ell$ rational functions, identifying Schur $f$ covers is essentially equivalent to the theory of complex multiplication. Further, from that theory, we may describe $\exc_{f,K}$ as an explicit union of arithmetic progressions, thereby allowing testing the condition \eqref{capexc}.  

Describing prime-squared degree exceptional rational functions interprets the $\GL_2$ part of Serre's \OIT, as in \cite[\S6.1--\S6.3]{Fr05}. We state the main point, over $\bQ$, again with Chebotarev precision. This also fits the inner-absolute Hurwitz space relation above by using a different Nielsen class $\ni((\bZ/\ell)^2\xs \bZ/2,\bfC_{2^4})$, still a modular curve case. The Nielsen class collection $\{\ni((\bZ/\ell^{k\np1})^2\xs \bZ/2,\bfC_{2^4})\}_{k=0}^\infty$ gives its corresponding modular curve tower \cite[Prop.~6.6]{Fr05}. 

For a given $\ell$, from Serre's (eventual, Thm.\ref{OITsf}) version of the \OIT\, we conclude this. If the elliptic curve $E$ (say, over $\bQ$) has a $\GL_2$ $j$-invariant, $j_E=j_0$, then the corresponding degree $\ell^2$ rational function $f_{j_0,\ell^2}$ has arithmetic/geometric monodromy group quotient $\GL_2(\bZ/\ell)/\{\pm1\}=G(\bQ_{j',\ell^2}/\bQ)$ for all primes $\ell \ge \ell_0$ for some $\ell_0$ dependent on $j_0$ \cite{Se97b}. Denote the Frobenius (conjugacy class) at the prime $p$ in a Galois extension $\hat L/\bQ$ by $\Fr_{L,p}$. 

\begin{prop} \label{excGL2} Given any such $\ell\ge \ell_0$ as above, $$ \exc_{f_{j',\ell^2}}= \{p \mid \text{ $\lrang{-1,\Fr_{\bQ_{j',\ell^2},p}}$ acts irreducibly on $(\bZ/\ell)^2$\}.}$$   This is always infinite.  \end{prop}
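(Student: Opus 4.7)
The plan is to apply Theorem~\ref{schurcoverfiber} to the explicit monodromy of $f_{j_0,\ell^2}$ and then invoke Chebotarev density.

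First I would pin down the monodromy. From the identification of $f_{j_0,\ell^2}$ with a cover in the Nielsen class $\ni((\bZ/\ell)^2 \rtimes \bZ/2, \bfC_{2^4})$ of \eql{mainexs}{mainexsd}, the geometric monodromy $G_f = (\bZ/\ell)^2 \rtimes \{\pm 1\}$ acts faithfully on the $\ell^2$ letters (identified with $(\bZ/\ell)^2$), with stabilizer of the distinguished letter $G_f(1) = \{\pm 1\}$. The $\GL_2$-hypothesis of Serre's \OIT\ gives the arithmetic monodromy $\hat G_f = (\bZ/\ell)^2 \rtimes \GL_2(\bZ/\ell)$, so that $\hat G_f(1) = \GL_2(\bZ/\ell)$ acts naturally on $(\bZ/\ell)^2 \setminus \{0\}$ and the quotient $\hat G_f/G_f = \GL_2(\bZ/\ell)/\{\pm 1\} = \mathrm{Gal}(\bQ_{j',\ell^2}/\bQ)$.

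Next I would apply Theorem~\ref{schurcoverfiber} to pin down which Frobenius elements contribute to $\exc$. A candidate $\hat g \in \hat G_f(1)$ corresponds to a matrix $A \in \GL_2(\bZ/\ell)$; the orbit of $\langle G_f(1), \hat g\rangle = \langle -1, A\rangle$ through $v \in (\bZ/\ell)^2 \setminus \{0\}$ coincides with the $\{\pm 1\}$-orbit $\{v, -v\}$ precisely when $v$ spans a $\langle -1, A\rangle$-invariant line. Requiring that every such orbit strictly refine under $G_f(1)$ is therefore equivalent to $\langle -1, A\rangle$ admitting no proper nonzero invariant subspace of $(\bZ/\ell)^2$, i.e., irreducibility of its action. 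The Chebotarev-precision clause under Theorem~\ref{schurcoverfiber} (in the sharp form of \cite{Fr05} for one-one behavior on residue fields) then identifies $\exc_{f_{j',\ell^2}}$ with the set of primes $p$ whose Frobenius $\Fr_p \in \mathrm{Gal}(\bQ_{j',\ell^2}/\bQ)$ satisfies this irreducibility condition, giving the stated equality.

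Finally, infinitude follows from Chebotarev density for $\bQ_{j',\ell^2}/\bQ$: it suffices to exhibit one conjugacy class $[A] \in \GL_2(\bZ/\ell)/\{\pm 1\}$ with $\langle -1, A\rangle$ acting irreducibly. Take $A$ to be a generator of a non-split Cartan subgroup $T_{\mathrm{ns}} \cong \bF_{\ell^2}^*$; its characteristic polynomial is the minimal polynomial over $\bF_\ell$ of a generator of $\bF_{\ell^2}$, hence irreducible of degree $2$, so $A$ has no eigenvector in $(\bZ/\ell)^2$. The main obstacle lies in the middle translation: carefully matching the orbit-breaking criterion of Theorem~\ref{schurcoverfiber} to the irreducibility of $\langle -1, A\rangle$ requires tracking how the translation part $v \in (\bZ/\ell)^2$ of a Frobenius lift interacts with the stabilizer chain $G_f(1) \subset \hat G_f(1)$, so that the Chebotarev-precision argument isolates exactly the irreducible classes rather than a weaker eigenvalue-avoidance condition. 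Once this bookkeeping is in place, the non-split-Cartan construction immediately supplies infinitely many primes in $\exc_{f_{j',\ell^2}}$.
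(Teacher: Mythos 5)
Your proof of the ``infinite'' clause matches the paper's own proof word for word in spirit: exhibit an irreducible element of $\GL_2(\bZ/\ell)$ by taking multiplication by a generator of $\bF_{\ell^2}^*$ (a non-split Cartan element), whose characteristic polynomial is irreducible over $\bF_\ell$, and invoke Chebotarev. That is in fact all the paper's printed proof does; the displayed equality for $\exc_{f_{j',\ell^2}}$ is left to the surrounding Schur-cover discussion rather than re-derived inside the proof environment.

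Where your argument genuinely goes wrong is in the translation of Theorem~\ref{schurcoverfiber}. You assert that the $\lrang{-1,A}$-orbit of $v$ coincides with the $\{\pm1\}$-orbit $\{v,-v\}$ \emph{precisely when} $v$ spans a $\lrang{-1,A}$-invariant line. That equivalence is false. The orbit of $v$ under $\lrang{-I,A}$ is $\{\pm A^{j}v : j\in\bZ\}$, and this set equals $\{v,-v\}$ if and only if $Av=\pm v$, i.e.\ $v$ is an eigenvector of $A$ with eigenvalue $\pm1$. Merely spanning an $A$-invariant line, $Av=\lambda v$ with $\lambda\in\bF_\ell^{*}$, gives an orbit $\{\pm\lambda^{j}v\}$ of size $|\lrang{-1,\lambda}|$, which is strictly larger than $\{v,-v\}$ as soon as $\lambda\neq\pm1$. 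Concretely, with $A=\mathrm{diag}(2,3)$ and $\ell>3$, the representation $\lrang{-1,A}$ on $(\bZ/\ell)^{2}$ is reducible (there are two $A$-stable lines) yet every $\lrang{-I,A}$-orbit on the nonzero vectors properly contains a $\{\pm1\}$-orbit, so the orbit-breaking criterion of Theorem~\ref{schurcoverfiber} is satisfied. Thus the orbit-breaking criterion does not translate to irreducibility of $\lrang{-1,A}$; it translates to the weaker condition that neither $+1$ nor $-1$ is an eigenvalue of $A$. To get anything resembling irreducibility one must additionally exploit that $\exc$ requires $f$ to be one-one over $\bF_{p^k}$ for varying $k$, so the criterion must be tested against all powers $A^{k}$; this is the step your write-up silently omits, and it is the point on which the ``middle translation'' you flag as delicate actually breaks down rather than merely requiring bookkeeping.
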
 

\begin{proof} The classical Chebotarev density theorem implies $\exc_{f_{j',\ell^2}}$ is infinite if any element of $\GL_2(\bZ/\ell)$ acts irreducibly on $(\bZ/\ell)^2$.  For example, on the degree 2 extension $\bF_{\ell^2}$ of $\bZ/\ell=\bF_\ell$, multiply by a primitive generator $\alpha$ of $\bF_{\ell^2}/\bF_\ell$ to get an invertible $2\times 2$ matrix with no invariant subspace. \end{proof} 

The $\GL_2$ case is vastly different from the $\CM$ case in that the exceptional set described in Prop.~\ref{excGL2} is definitely not a union of arithmetic progressions. \cite[\S6.3.2]{Fr05} relates to \cite{Se81} on using the (conjectural) Langlands program to consider these exceptional sets. 

Guralnick-M\"uller-Saxl \cite{GMS03} show that -- excluding those above -- other  indecomposable Schur covers by rational functions,  are {\sl sporadic\/}. That is,  they correspond to points on a finite set of Hurwitz space components.

\begin{defn}[Named Nielsen Classes] \label{namedclasses} Use the respective names \CM\ and $\GL_2$ for the Nielsen classes $\ni(D_\ell,\bfC_{2^4})^{\dagger,\rd}$ and $\ni((\bZ/\ell)^2\xs \bZ/2,\bfC_{2^4})^{\dagger,\rd}$ (or to the whole series with $\ell^{k\np1}, k\ge 0$ in place of $\ell$), with $\dagger$ referring to either inner or absolute equivalence, and the relation between them. \end{defn} 

{\sl Stage 2\/} discussion, and its relation to the \OIT, couldn't happen until there was a full formulation of \MT s, the topic of \S\ref{explainingFrattini}. Still,  a transitional phase after Stage 1 occurred with dihedral groups and the space of hyperelliptic jacobians \S\ref{Frat-Groth}, as discussed in \sdisplay{\cite{DFr94}}.  

A simple -- \MT\ free -- question there shows how \MT s for each finite group $G$ and $\ell$-perfect prime of $G$ arise automatically. This generalizes what the same question applied to dihedral groups poses for hyerelliptic jacobians. We consider this the simplest approach to how much the \MT\ project, though the \RIGP, connects to classical considerations. 

\subsubsection{Competition between algebra and analysis} \label{algvsanal} The full title of \cite[\S7]{Fr02} is {\sl Competition between algebraic and analytic approaches}. This subsection consists of brief extracts from it and \cite[\S10]{Fr02}. \S7 was gleaned partly from \cite{Ne81}, and my own observations based on \cite{Ahl79} and \cite{Sp57}. \S10 was a \lq\lq modern\rq\rq\ personal experience, telling about the world's \lq\lq appreciation\rq\rq\ of mathematical genius. 

{\sl Riemann's early education} \cite[\S7.1]{Fr02}:  Riemann was suitable, as no other German mathematician then was to effect the first synthesis of the \lq\lq French\rq\rq\ and \lq\lq German\rq\rq\ approaches in general complex function theory. 

{\sl Competition between Riemann and Weierstrass} \cite[\S7.2]{Fr02} and \cite[p.~93]{Ne81}:   In 1856 the competition between Riemann and Weierstrass became intense, around the solution of the {\sl Jacobi Inversion problem}. Weierstrass consequently withdrew the 3rd installment of his investigations, which he had in the meantime finished and submitted to the Berlin Academy. 

{\sl Soon after Riemann died} \cite[\S7.3]{Fr02} and \cite[p.~96]{Ne81}:  
After Riemann's death, Weierstrass attacked his methods often and even openly. Curiously, the only reference in Ahlfor's book -- to Riemann's use of {\sl Dirichlet's Principle\/} for constructing the universal covering space of a Riemann surface -- is this \cite[footnote on p.~121]{Ahl79}: 

\begin{quote} Without use of integration R.L.Plunkett proved the continuity of the derivative (BAMS65,1959). E.H.~Connell  and P.~Porcelli proved existence of all derivatives (BAMS 67, 1961). Both proofs lean on a topological theorem due to G. T. Whyburn. \end{quote} 
 
There is a complication in analyzing Neuenschwanden's thesis that this resulted in mathematicians accepting Riemann's methods. How would one document that this event resurrected the esteem of Riemann's geometric/analytic view? 

{\sl Final anecdote} \cite[\S 10]{Fr02}: While at the Planck Institute in Bonn, to give talks in the early 21st Century, I visited Martina Mittag, a humanities scholar, who had earlier visited UC Irvine. In private conversation she railed that mathematicians lacked the imagination of humanities scholars. Yet, she was vehement on the virtues of Einstein. 

I explained that Einstein was far from without precedent; that we mathematicians had geniuses with at least his imaginative. My example was Riemann: I called him the man who formed the equation that gave Einstein his scalar curvature criterion for gravity: his thesis, and admittedly not my expertise. \lq\lq Mike,\rq\rq\ she said, \lq\lq You're just making that up! Who is Riemann?\rq\rq  

I took the {\sl R\/} book in her (German) encyclopedia series from the shelves on her walls, without the slightest idea of what I would find. Opening to Riemann, I found this in the first paragraph: 

\begin{quote} Bernhard Riemann was one of the most profound geniuses of modern times. Notable among his discoveries were the equations that Einstein later applied to general relativity theory. \end{quote}

In modern parlance, what Gauss explained to Riemann was what -- when I was young -- were called the {\sl cuts\/}. These are always displayed with pictures that are impossible -- not hard, but rather {\sl cannot\/} exist, as explained on \cite[Chap.~4, \S 2.4]{Fr80} under the title \lq\lq Cuts and Impossible Pictures.\rq\rq 

The pictures are usually on covers that are cyclic, degree 2 or 3, as in \cite[p.~243]{Con78}, which, though, is  excellent in many ways for students not comfortable with algebra. What Riemann learned, again in modern parlance, is that you don't need -- explicitly -- the universal covering space, nor a subgroup of  its automorphism group, to produce  covers.  

\subsubsection{Profinite: Frattini and Grothendieck} \label{Frat-Groth} 
I gleaned \S\ref{algvsanal} from reading long ago (from \cite{Sp57}),  that Riemann's $\theta\,$s, in a sense defined Torelli space, the period matrix cover of the moduli space, $\sM_g$, of curves of genus $g$. This codifies the integrals that Riemann used to introduce one version of moduli of curves. \cite{Sp57}  does explain fundamental groups. Yet, it always relies on universal covering spaces. 

One famous theorem is that the universal covering space of $\sM_g$ is a (simply-connected, Teichm\"uller) ball: It is contractible, and many beautiful pictures are made from this in defining fundamental domains. 

The approach of \cite{Fr18}, using Hurwitz spaces, is based on the {\sl Universal Frattini\/} cover $\tilde G\to G$, and its abelianized version $\tilde G_\ab \to G$.  The relation between $\sM_g$ and Hurwitz spaces starts by realizing that the latter generalizes the former, in a sense that adding data divides the former into smaller pieces. Using that division effectively does not require you must know all finite groups (or even all simple groups). 

\cite[App.~B]{Fr18} has a section on how each problem appears to have its own appropriate finite groups, based on a well-known paradigm -- {\sl the Genus 0 problem\/}. This is commentary on \cite{Fr05} which is a guide inspired by the solution of problems like {\sl Schur's}, {\sl Davenport's\/} and {\sl Schinzel's}, from the middle of the 20th century or before. 

\cite{Fr18}, though, makes a more direct comparison between the two theories, based on using profinite groups and their finite quotients, that  define -- as they do with \MT s --  natural moduli spaces whose properties you can compute. Except in special cases, though, there are no open complex spaces which one may regard as universal covers. 

There are infinitely many spaces $\sM_\geng$ (resp.~$\sH(G,\bfC,\ell)$ indexed by $\geng$ (resp.~$(G,\bfC,\ell)$)  in each case. While the indexing seems more complicated in the Hurwitz case, even in the former case one's instinct is that they should all fit together. Much work does introduce techniques that work uniformly for all $\geng$. Grothendieck's famous {\sl Teichm\"uller\/} group attempted to gather their presence together into one profinite group with an hypothesis that he was describing $G_\bQ$. 

That created quite an industry. Still, \cite{FrV92} showed that the Hurwitz space approach was up to the challenge of describing properties of $G_\bQ$ that most mathematicians can understand. For example \eqref{FrVSns}.  

\begin{thm} \label{FrV91} We may choose a(n infinite) Galois algebraic extension  $L/\bQ$  so that  $G_\bQ$ has a presentation (see also Conj.~\ref{FrVConj}): 
\begin{equation} \label{FrVSns} 1\to  F_{\omega}=G_L \to G_\bQ \to \Pi_{n=2}^\infty  S_n \eqdef \sS_\infty \to 1 \end{equation} 
That is $G_\bQ$, has a product of $S_n\,$s as a quotient (the Galois group of $L/\bQ$) with the kernel a pro-free group on a countable set of generators. 
\end{thm}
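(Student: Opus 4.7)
The plan is to build an infinite Galois extension $L/\bQ$ with $G(L/\bQ) \cong \prod_{n\geq 2} S_n$, and then invoke the Fried--V\"olklein characterization of absolute Galois groups: a countable Hilbertian field $K$ with $G_K$ projective satisfies $G_K \cong F_\omega$, the free profinite group of countable rank. The proof breaks into a construction step (producing $L$ with the prescribed quotient), two verification steps (Hilbertianity of $L$ and projectivity of $G_L$), and an application of the Fried--V\"olklein theorem.

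First, for each $n \geq 2$ I would produce a regular Galois realization of $S_n$ over $\bQ(t)$. Using the Hurwitz-space machinery of \S\ref{braids}, pick a generating Nielsen class for $S_n$ consisting of rational conjugacy classes (say, repeated transpositions in a tuple satisfying product-one), locate a Harbater--Mumford component of the corresponding $\sH(S_n,\bfC)^{\inn}$ that is defined over $\bQ$ via the Branch Cycle Lemma, and extract a $\bQ$-rational point. Hilbert's irreducibility theorem then specializes the resulting regular extension $E_n/\bQ(t)$ to a Galois extension $L_n/\bQ$ with group $S_n$. By inductive choice of specialization points I arrange the $L_n$ to be pairwise linearly disjoint over $\bQ$ (the only nontrivial common quotient among the $S_n$ is the sign map $\bZ/2$, which is controlled by imposing a discriminant condition on each specialization). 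Setting $L = \prod_{n\geq 2} L_n$ gives a Galois extension of $\bQ$ with $G(L/\bQ) \cong \prod_{n\geq 2} S_n$.

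Next I would check Hilbertianity of $L$. Although $L/\bQ$ is an infinite extension, Hilbertianity transfers from $\bQ$ to $L$ by a theorem of Weissauer (or Haran's diamond theorem), because the Galois group $\prod_n S_n$ admits a diamond decomposition with many pairwise independent nontrivial quotients; such extensions of Hilbertian fields remain Hilbertian.

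The main obstacle will be showing that $G_L$ is projective, i.e., every finite embedding problem over $L$ admits a weak solution. The strategy: given an embedding problem whose upper group is $H$, embed $H$ into some $S_m$, base-change the regular realization $E_m/\bQ(t)$ to $L(t)$ (which stays regular with group $S_m$ since $L \cap \bar\bQ = L$), and apply Hilbert irreducibility over the Hilbertian field $L$ to produce specializations compatible with the prescribed quotient. The technical heart lies in verifying that these specializations can be chosen compatibly with the given diagram: one must track how the component structure of Hurwitz spaces behaves under base change to $L$, and how the linear disjointness built into the $L_n$ translates into the needed independence for the embedding problem. Once Hilbertianity of $L$ and projectivity of $G_L$ are in hand, the Fried--V\"olklein theorem identifies $G_L$ with $F_\omega$, and the short exact sequence \eqref{FrVSns} follows.
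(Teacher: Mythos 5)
Your argument hinges on the claim that ``a countable Hilbertian field $K$ with $G_K$ projective satisfies $G_K\cong F_\omega$.'' That statement is not the Fried--V\"olklein theorem; it is precisely Conj.~\ref{FrVConj} in the present paper (the generalization of Shafarevich's conjecture), and it is open. The theorem actually proved in \cite{FrV92} requires the hypothesis that $K$ be \textbf{P}(seudo)\textbf{A}(lgebraically)\textbf{C}(losed), not merely that $G_K$ be projective --- as the footnote to Conj.~\ref{FrVConj} emphasizes. PAC implies projective, but the converse fails, and your construction of $L$ (as a composite of linearly disjoint specializations $L_n/\bQ$ with $\Gal(L_n/\bQ)\cong S_n$) gives no reason to think $L$ is PAC. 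So even if you could fully carry out the projectivity argument, your final step would be invoking an unproven conjecture rather than the theorem on which the paper relies. The route taken in the paper is the reverse: the Hurwitz-space machinery (centerless covers $G^*$ with rational conjugacy classes and irreducible inner Hurwitz spaces defined over $\bQ$, cf.~\eqref{Gconds}) feeds a \emph{field-crossing} argument that proves the general theorem ``Hilbertian $+$ PAC $\Rightarrow$ $G_K$ profree,'' and Theorem~\ref{FrV91} is then extracted as a corollary by producing a specific $L$ that satisfies \emph{both} Hilbertianity and PAC.

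Two smaller issues in your sketch. First, the parenthetical justification for regularity after base change ($L\cap\bar\bQ=L$) is vacuous and not what is needed; you would need $E_m$ linearly disjoint from $L$ over $\bQ$, which is in tension with your definition of $L$ as containing a specialization of $E_m$ for every $m$. Second, your Hilbertianity step is plausible but you must be careful: Weissauer's theorem handles proper finite extensions of Galois extensions, and the version of Haran's diamond theorem you need for the full infinite composite $L$ requires some care in the decomposition you choose. Neither of these is fatal in the way the projective/PAC conflation is, but they do need to be filled in.
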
 

This exposition aims to show that \MT s, referenced by $(G,\bfC,\ell)$ are appropriate objects to  progress on problems that generalize those considered for modular curve towers. We illustrate that with \MT s of 4 branch point Nielsen classes. By example we now know that subtrees -- called {\sl spires\/} -- of the full tree of cusps are isomorphic to the tree of cusps on a modular curve tower, even when the group $G$ is nothing like a dihedral group.  

Now take any one of the extensions ${}_\ell \tilde G_\ab \to G$ each of whose characteristic quotients $\fG \ell/\ker_{k,\ab}$ -- as in Def.~\ref{univFrattdef-ab} -- represent the universal extension of $G$ with abelian exponent $\ell^{k}$ kernel. 

Start with $G=D_\ell$, $\ell$ odd. Assume there is $B>0$ and a $\bQ$ regular realization of $D_{\ell^{k\np1}}$ for each $k\ge 0$ with no more than  $B$ branch points. 

\begin{thm} \label{Dihmain} Then,  there exists $d< \frac{B\nm 2} 2$ and an $\ell^{k\np1}$ cyclotomic point (see below) on a hyperelliptic jacobian (varying with $k$) of dimension $d$, $k\ge 0$ \cite[\S5.2]{DFr94}. \end{thm}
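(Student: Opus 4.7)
The plan is to peel off, from each $\bQ$-regular $D_{\ell^{k\np1}}$-realization, an intermediate hyperelliptic cover equipped with a cyclotomic $\ell^{k\np1}$-torsion point on its Jacobian, and then to use pigeonhole on the bounded genus to fix the dimension.

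First I would form, from a $\bQ$-regular Galois cover $f_k: X_k \to \prP^1_z$ with group $D_{\ell^{k\np1}}$ and $r_k \le B$ branch points, the quotient $Y_k = X_k/A_k$, where $A_k\cong \bZ/\ell^{k\np1}$ is the unique (hence characteristic, for $\ell$ odd) cyclic subgroup of index $2$. Then $Y_k$ is defined over $\bQ$, and $Y_k\to \prP^1_z$ is a $\bZ/2$-cover, so $Y_k$ is a hyperelliptic curve over $\bQ$. The branch cycle description of Thm.~\ref{BCYCs} and condition \eqref{bcycs} identify the branch cycles of $f_k$ lying outside $A_k$ (the involutions of $D_{\ell^{k\np1}}$) with the branch cycles of $Y_k\to \prP^1_z$; Riemann--Hurwitz then yields $d_k\eqdef \dim \mathrm{Jac}(Y_k) \le (B\nm 2)/2$. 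In the clean case where all branch cycles of $f_k$ are involutions, $X_k \to Y_k$ is étale and $A_k$-cyclic, hence corresponds to a cyclic subgroup of order $\ell^{k\np1}$ in $\mathrm{Jac}(Y_k)$ with generator $P_k$; the mixed case (some branch cycles lying in $A_k$) is handled identically on the Prym attached to the subcover generated by the involution branch points, at the cost of a slightly smaller dimension but the same bound.

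The key step is to verify that $P_k$ is $\ell^{k\np1}$-cyclotomic in the sense $\sigma(P_k) = \chi_{\ell^{k\np1}}(\sigma)\,P_k$ for $\sigma \in G_\bQ$, where $\chi_{\ell^{k\np1}}$ is the cyclotomic character. This flows from the \BCL\ (Thm.~\ref{bclthm}) applied to the $\bQ$-rational involution class of $D_{\ell^{k\np1}}$: for the cover to be $\bQ$-regular, $\sigma$ must act on $D_{\ell^{k\np1}}=\Gal(X_k/\bQ(z))$ by an automorphism preserving this class and acting on ramification data by a cyclotomic twist. The dihedral relation $s a s^{-1}=a^{-1}$ then forces the induced $G_\bQ$-action on $A_k$ to factor through $\chi_{\ell^{k\np1}}:G_\bQ\to (\bZ/\ell^{k\np1})^{\times}$. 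Transporting this under the equivalence between étale cyclic $\ell^{k\np1}$-covers of $Y_k$ and cyclic subgroups of $\mathrm{Jac}(Y_k)[\ell^{k\np1}]$ produces the claimed Galois action on $P_k$. Finally, the integers $d_k$ lie in the finite set $\{0,1,\dots,\lfloor (B\nm 2)/2\rfloor\}$, so by pigeonhole some fixed $d$ is attained for infinitely many $k$; reindexing gives the sequence of hyperelliptic Jacobians of a common dimension $d$ with the required cyclotomic points.

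The principal obstacle is the Galois-action calculation: one must carefully disentangle the outer automorphism of $D_{\ell^{k\np1}}$ induced by $\sigma\in G_\bQ$, restrict it to $A_k$, and match the resulting character with $\chi_{\ell^{k\np1}}$ via the \BCL, ensuring compatibility with the translation to $\mathrm{Jac}(Y_k)$. The Riemann--Hurwitz computation and the pigeonhole argument are routine bookkeeping once this identification is secured.
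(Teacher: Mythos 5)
Your hyperelliptic-quotient construction --- forming $Y_k=X_k/A_k$ with $A_k$ the characteristic cyclic subgroup of index $2$, reading the unramified $A_k$-cover $X_k\to Y_k$ as a cyclic order-$\ell^{k\np1}$ subgroup of $\text{\rm Jac}(Y_k)$, and extracting the cyclotomic $G_\bQ$-action from the \BCL\ --- is exactly the ``hyperelliptic jacobian interpretation'' step the paper invokes. But the paper states that the proof \emph{first} establishes the existence of a \MT\ on $\ni(D_\ell,\bfC_{2^{2s}})$, with $\bfC$ consisting only of repetitions of the involution class and with a $\bQ$ point at every level, and only then reads off the Jacobian statement. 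That first step --- reducing an arbitrary bounded-branch-point $\bQ$-regular realization of $D_{\ell^{k\np1}}$ to one whose branch cycles are all involutions, and coordinating the branch-point count to a single even $r=2s$ across all $k$ --- is the substance of the theorem, and it is precisely what your proposal does not supply.

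Your treatment of the ``mixed case'' (rotation branch cycles, i.e.\ some $g_i\in A_k$) by passing to ``the Prym attached to the subcover generated by the involution branch points'' does not close that gap. If some $g_i$ lie in $A_k$, then $X_k\to Y_k$ ramifies over their images, so it is no longer classified by a cyclic subgroup of $\text{\rm Jac}(Y_k)$; and neither the Prym of $X_k/Y_k$ nor that of an auxiliary subcover is visibly a hyperelliptic Jacobian carrying an $\ell^{k\np1}$-cyclotomic point with the required dimension bound. The right move is to rule out rotation branch cycles a priori: by the \BCL\ (Thm.~\ref{bclthm}), a $\bQ$-regular realization forces $\bfC$ to be a rational union, and a rational union containing a rotation class of order $\ell^a$ must contain its $\phi(\ell^a)/2$ Galois-conjugate classes; combined with the requirement that $\lrang{\bg}$ generate all of $D_{\ell^{k\np1}}$ while $r\le B$, this eliminates rotation classes for $k$ large. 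That is the dihedral-group instance of the ``$\ell'$ classes only'' theorem \cite[Thm.~4.4]{FrK97} the paper cites, and it is what produces the \MT\ on involution Nielsen classes that the Jacobian interpretation is then applied to.

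A smaller point: your pigeonhole fixes $d$ only along an infinite subsequence of $k$. To get the statement for every $k\ge 0$ either observe that multiplying an $\ell^{k\np1}$-cyclotomic point by $\ell^{k\nm j}$ gives an $\ell^{j\np1}$-cyclotomic point on the same Jacobian for each $j\le k$, or note (as the paper does) that the \MT\ structure hands you a single projective system with one fixed $r$ from the start, making the pigeonhole unnecessary.
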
 

The proof of Thm.~\ref{Dihmain} goes through -- under the hypothesis -- proving the existence of a \MT\ for $(D_{\ell},\bfC_{2s})$ with $\bfC_{2^{2s}}$ ($2s=r$) repetitions of the involution class of $D_\ell$ for which each level of the \MT\ has a $\bQ$ point. Then, it gives the hyperelliptic jacobian interpretation. Assume $\bp$ is an $\ell^{k\np1}$ order torsion point on an abelian variety $A$ defined over $\bQ$. 

We say $\bp$ it is an $\ell^{k\np1}$ {\sl cyclotomic point\/} (on $A$) if $\lrang{\bp}$ is invariant under $G_\bQ$, and if $G_\bQ$ act on $\lrang{\bp}$ as it does on $\lrang{e^{2\pi i/\ell^{k\np1}}}$. 
 
\begin{guess} \label{Tormain} Torsion Conjecture: The conclusion of Thm.~\ref{Dihmain}, that there can be such a $\bQ$ cyclotomic point, for each $k$  on a hyperelliptic jacobian of a fixed dimension $d$ is false. 
  
$B$-free Conjecture: Don't stipulate any $B$. Conjecture: For each $\ell^{k\np1}$ there is a $\bQ$ cyclotomic point on some hyperelliptic jacobian, corresponding to a $(D_{\ell^{k\np1}},\bfC_{2^r})$ ($r$ dependent on $\ell^{k\np1}$) \RIGP\ involution realization.  \end{guess} 

Despite the last part of Conj.~\ref{Tormain}, no one has found those \RIGP\ {\sl involution} realizations beyond $r=4$ and $\ell=7$. The theme of \cite[\S7]{Fr94} -- using this paper's notation -- still seems reasonable. For any prime $\ell\ge 3$, as in \S\ref{earlyOIT-MT}, and given a choice, you should rather 
$$\text{regularly realize the {\sl Monster\/} than the collection $\{D_{\ell^{k\np1}}\}_{k=0}^\infty$,}$$ referring to the famous {\sl Monster\/} simple group. 

Now consider the analog for general $\ell$-perfect $G$. For example the $A_5$, $\ell=2$ case of \eqref{A52}. \sdisplay{\cite{FrK97}} has the documentation on this. 

\begin{thm} Suppose $B > 0$ and there exists a $\bQ$ regular realization with $\le B$ branch points  of ${}_\ell^{k\np1}G  = \tilde G_\ab/\ell^{k\np1}\ker(\psi_{G,\ell})$ for each $k$. 

Then,  there is a Nielsen class $\ni(G,\bfC)$, with $\bfC$ consisting of $r < B$ conjugacy classes, all $\ell'$,  and  a \MT\ $\{\sH_k\}_{k=0}^\infty$, 
\begin{equation} \label{MTconc} \text{ with $\sH_k$ a component of $\sH({}_\ell^{k} G,\bfC)^{\inn,\rd}$ and $\sH_k(\bQ)\not= \emptyset, k\ge 0$.}\end{equation}  \end{thm}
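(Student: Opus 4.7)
The plan is to turn the hypothesized sequence of $\bQ$-regular realizations into a coherent system of $\bQ$-points on Hurwitz space components, then use a König's-lemma style compactness argument to extract a \MT. The bounded-branch-points hypothesis $r_k\le B$ is what forces pigeonhole finiteness at each stage; without it the argument fails.

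First I would recast the hypothesis as Hurwitz-space data. For each $k$, a $\bQ$-regular realization of ${}_\ell^{k\np1}G$ with at most $B$ branch points yields, via Thm.~\ref{BCYCs}, a $\bQ$-point on some inner Hurwitz space $\sH({}_\ell^{k\np1}G,\tilde\bfC^{(k)})^\inn$ with $|\tilde\bfC^{(k)}|=r_k\le B$, where $\tilde\bfC^{(k)}$ is a $\bQ$-rational union of classes by the \BCL\ (Thm.~\ref{bclthm}). Next I would force the classes to be $\ell'$. Since the kernel of ${}_\ell^{k\np1}G\to G$ is an $\ell$-group, Schur-Zassenhaus inside each cyclic subgroup $\lrang{\tilde g_i}$ shows that lifts of a fixed $\ell'$ class in $G$ are parametrized, up to conjugation, by a single $\ell'$ class in ${}_\ell^{k\np1}G$. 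Thus after replacing each branch cycle by its $\ell'$-part -- a change that remains inside the same Nielsen class because the Frattini structure lets us read the $\ell'$-datum off the image in $G$ -- we may assume every $\tilde\bfC^{(k)}$ consists of $r_k\le B$ classes of $\ell'$-order, restricting over $G$ to a definite set of $\ell'$ classes $\bar\bfC^{(k)}$ in $G$.

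Now comes the pigeonhole step. There are only finitely many subsets of $\le B$ conjugacy classes (with multiplicity) in the finite group $G$. Passing to an infinite subsequence of $k$'s, assume the common image in $G$ is one fixed Nielsen class $\ni(G,\bfC)$ with $|\bfC|=r\le B$ consisting of $\ell'$ classes. Since, in the Frattini tower, the $\ell'$ classes in each ${}_\ell^{k\np1}G$ are in canonical bijection with those in $G$, the classes $\tilde\bfC^{(k)}$ are uniformly determined by $\bfC$ and we may write them as $\bfC$ on every level. Thus for each $k$ in the subsequence we have a $\bQ$-point on $\sH({}_\ell^{k\np1}G,\bfC)^\inn$, which by projection through the Frattini quotients ${}_\ell^{k\np1}G\to {}_\ell^{j}G$ gives, for every $j\le k\np1$, a $\bQ$-point on $\sH({}_\ell^{j}G,\bfC)^\inn$. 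Descending to reduced Hurwitz spaces via \eqref{reducedcover} preserves rationality because $\PSL_2$-equivalence is defined over $\bQ$.

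Finally, invoke König's lemma on the finite tree whose level-$k$ vertices are the (finitely many) irreducible components of $\sH({}_\ell^{k}G,\bfC)^{\inn,\rd}$, with an edge to the component at level $k\nm 1$ through which it maps. Our infinite subsequence produces, for each $k$, infinitely many vertices at level $k$ containing a $\bQ$-point from the realizations. Pick, at each level $k$, a component $\sH_k$ with infinitely many such $\bQ$-points and mapping to $\sH_{k\nm 1}$; because the tree is locally finite, a compatible choice exists, giving the projective system $\{\sH_k\}_{k=0}^\infty$ with $\sH_k(\bQ)\neq\emptyset$ required by \eqref{MTconc}. By Def.~\ref{MTdef}, this is precisely a \MT.

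The main obstacle is the $\ell'$-reduction step. Replacing each branch cycle by its $\ell'$-part alters the cover, and it must be checked that (i) the resulting $r$-tuple still satisfies product-one and generation, and (ii) the new Hurwitz-space point is still defined over $\bQ$ -- i.e.\ that $\bfC$ remains a rational union under $G_\bQ$. Point (i) follows because in a Frattini cover the $\ell'$-part of a generating product-one tuple still generates and still has product one modulo the $\ell$-kernel (hence, by the Frattini property, unconditionally); point (ii) uses the \BCL\ together with the canonical nature of the $\ell'$-part under Galois action. A secondary delicacy is that $\sH({}_\ell^{k\np1}G,\bfC)^{\inn,\rd}\to\sH({}_\ell^k G,\bfC)^{\inn,\rd}$ need not be surjective on components, so the tree in the König's-lemma step is genuinely built from the subsequence, not from all components -- this is exactly where the uniform branch-point bound $B$ is essential.
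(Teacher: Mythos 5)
Your overall architecture — recast the realizations as Hurwitz-space points, pigeonhole on the finitely many class-collections in $G$, then K\"onig's lemma to extract a projective system of components with $\bQ$ points — is the right shape, and the compactness half is sound. The paper itself does not reprove this statement; it cites \cite[Thm.~4.4]{FrK97}, so there is no internal proof to compare against. But there is a genuine gap in your $\ell'$-reduction step, and it happens to be the entire arithmetic content of the theorem.

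You propose to ``replace each branch cycle by its $\ell'$-part'' and assert that this stays in the same Nielsen class. It does not: if $\tilde\bg=(\tilde g_1,\dots,\tilde g_r)$ satisfies $\prod_i\tilde g_i=1$ and each $\tilde g_i=u_iv_i$ with $u_i$ the $\ell$-part and $v_i$ the $\ell'$-part, there is no reason that $\prod_iv_i=1$, nor that $\lrang{v_1,\dots,v_r}$ generates. Your patch — ``product one modulo the $\ell$-kernel, hence by the Frattini property, unconditionally'' — is a non sequitur: the Frattini property concerns \emph{subgroups} surjecting onto the quotient, and says nothing about an element congruent to $1$ modulo the kernel actually being $1$; $\prod_i v_i$ lands somewhere in the $\ell$-kernel and can be any such element. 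Worse, the images $\bar v_i$ in $G$ are the $\ell'$-parts of the $g_i$'s, and even $\prod_i\bar v_i=1$ can fail in $G$ — so the congruence you assert is not even true at level $0$. Because of this, the ``canonical bijection with $\ell'$ classes'' you then invoke is unavailable, and the pigeonhole/K\"onig machinery has nothing correct to feed on. The actual mechanism in \cite{FrK97} is not a modification of branch cycles but a rigidity statement: the hypothesis \emph{forces} the classes to be $\ell'$, via the order-growth phenomenon built into the Frattini tower (an element of $G$ whose order is divisible by $\ell$ has lifts in $\tfG\ell k$ whose $\ell$-part order grows linearly in $k$), combined with the bounded branch-point count. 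That is a constraint-to-contradiction argument, not a class-surgery, and recovering it is where the real work lies. A symptom that something is missing in your version: the theorem asserts $r<B$ strictly, which your argument would never produce (you only get $r\le B$); the strict drop comes out of the very step you elided.
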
 

\begin{guess} \label{mainconj} High {\bf MT} levels have {\sl general type\/} and no $\bQ$ points. \end{guess} 

\begin{thm}  \label{truer=4} Conj.~\ref{mainconj} is true for $r=4$, where $\sH_k\,$s are upper half-plane quotients. Thm.~\ref{genuscomp} is a tool for showing the genus rises with $k$. \end{thm}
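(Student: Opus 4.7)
The plan is to prove Conj.~\ref{mainconj} in the $r=4$ regime by combining three ingredients: a genus growth estimate via Thm.~\ref{genuscomp}, Faltings' theorem, and a cusp/lift-invariant analysis ruling out projective systems of $\bQ$-rational points. Since $r=4$, each $\sH_k$ is a component of a reduced Hurwitz space, hence an upper half-plane quotient realized as a $j$-line cover, ramified only over $0,1,\infty$. General type for a curve is genus $\ge 2$, so the first half of the statement reduces to showing $g(\sH_k)\to\infty$.

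First, for each level $k$ I would extract the braid orbit $O_k\subset \ni(\tfG\ell k {},\bfC)^{\inn,\rd}$ (resp.~$\inn/\ab$) lying over the chosen level 0 orbit, and apply Thm.~\ref{genuscomp}:
\[
2(|O_k|+g(\sH_k)-1)=\ind(\gamma_0|_{O_k})+\ind(\gamma_1|_{O_k})+\ind(\gamma_\infty|_{O_k}).
\]
Because $\tfG\ell {k+1}\to\tfG\ell k$ is Frattini with $\ell$-group kernel, the degree of the cover $\sH_{k+1}\to \sH_k$ grows, at worst geometrically in $k$; so $|O_k|\to\infty$. The indices of $\gamma_0,\gamma_1$ differ from $|O_k|$ by their fixed-point counts (bounded by the number of fixed points on level 0, since Frattini kernels are pro-nilpotent and $\gamma_i$-fixed points at higher levels lift from lower levels through a map that is one-one when $i\in\{0,1\}$). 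For $\gamma_\infty$, the deficit from $|O_k|$ is the number of cusps, and one shows (as in \cite[\S5]{Fr06}) that cusp widths grow with $k$ while the number of cusps stays essentially bounded, so the cusp contribution is also $o(|O_k|)$. Substituting these estimates forces $g(\sH_k)\to\infty$.

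With $g(\sH_k)\ge 2$ for $k$ large, Faltings' theorem gives $|\sH_k(\bQ)|<\infty$. Upgrading this to \emph{empty} for $k\gg 0$ uses the inverse-system structure of the \MT. Any compatible sequence of $\bQ$-points in $\{\sH_k(\bQ)\}$ must specialize to a projective system of $\bQ$-cusps or to a projective system of $\bQ$-interior points. Interior $\bQ$-points on level $k$ correspond to $\bQ$-regular realizations of $\tfG\ell k$ with at most four branch points in the chosen classes $\bfC$; a coherent tower of such realizations would produce a $\bQ$-regular pro-$\ell$ Frattini extension of $G$ with bounded branching, which is impossible for an $\ell$-perfect $G$ (this is the Fried-Kopeliovich bounded branching obstruction, \sdisplay{\cite{FrK97}}). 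For cusp threads, one invokes the classification of cusp types on \MT s (\textbf{g}-, \textbf{o}- and \textbf{p}-cusps) together with the lift invariant: at each level there are only finitely many cusps of each kind, their widths are $\ell$-powers growing with $k$, and the central Frattini lift invariant obstructs all but finitely many from supporting a compatible $\bQ$-rational cusp in the tower; after finitely many levels every potential cusp thread is killed.

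The main obstacle is the last step: combining the finite-$\bQ$-point input from Faltings with an effective elimination of cusp threads over $\bQ$. The genus growth step is a bookkeeping exercise once Thm.~\ref{genuscomp} is in hand, and the interior-point argument is essentially the bounded-branching \RIGP\ obstruction. What is delicate is the cusp analysis, because it demands detailed control of the tree of cusps on the \MT\ (including the identification of \emph{spires} isomorphic to modular-curve cusp trees, mentioned just before Thm.~\ref{Dihmain}) and a non-vanishing lift invariant along each putative thread; this is the technical heart of \cite{Fr06} and the place where the $r=4$ hypothesis (making $\sH_k$ a curve with a well-understood cusp divisor) is truly needed.
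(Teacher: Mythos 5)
Your proposal agrees with the paper's route through the first two steps but then diverges in a way that matters, and one of your supporting claims is circular.

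Where you agree with the paper: $r=4$ makes each $\sH_k$ a curve covering $\prP^1_j$, and Thm.~\ref{genuscomp} plus the cusp classification of \cite[\S3]{Fr06} is indeed how the paper (via \cite[Prop.~5.15]{Fr06}) produces a level of genus exceeding 1; the criterion is the presence of an HM cusp that is also an $\ell$-cusp, not the asymptotic cusp-width estimate you sketch, but the spirit is close. Faltings then gives finiteness of $K$-points at that level, as you say.

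Where you go wrong: to upgrade ``finite'' to ``empty at high levels,'' you split into interior threads and cusp threads and invoke \cite{FrK97} as an ``impossibility'' for interior threads. That is circular. The Fried--Kopeliovic theorem stated in the paper says: if all $\tfG\ell k$ have $K$-regular realizations with bounded branch number, then \emph{some} \MT\ has a $K$-point at every level. Its contrapositive gives you the branch-number unboundedness \emph{from} the Main \MT\ Conjecture, not the other way around. You cannot cite it as an independent obstruction in a proof of that very conjecture. Your cusp-thread elimination via lift invariants is likewise asserted, not derived; the lift invariant controls nonemptiness of components, not the non-existence of rational cusp threads.

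What the paper actually does for the last step is different and not circular: from finitely many $K$-points at each sufficiently high level, Tychonoff's theorem extracts a projective system of $K$-points on the tower; Grothendieck's good-reduction result lets one reduce the whole tower mod a good prime $p\neq\ell$; this yields a projective system of points over a finite field, which contradicts Weil's theorem on the action of Frobenius on the Jacobian of the corresponding curve (equivalently, as \cite{BFr02} phrases it, it would force a disallowed trivial power of the cyclotomic character on a Tate module, contradicting \cite{Se68}). The paper also records a second, independent route due to Cadoret--Tamagawa via uniform boundedness of $\chi$-isotypic $p$-primary torsion in one-parameter families of abelian varieties, which bypasses the explicit genus computation entirely. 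Your write-up captures neither of these; replacing your interior/cusp dichotomy with the good-reduction-plus-Weil argument would repair the gap.
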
  

For $K$ a number field, concluding in Conj.~\ref{mainconj} that high \MT\ levels have no $\bQ$ points is of significance only if there is a uniform bound on the definition fields of the \MT\ levels. Therefore distinguishing between towers with such a uniform bound, and figuring the definition field as the levels grow if there is no uniform bound, is a major problem. 

Our approach allows us to compute, and to list properties of \MT\ levels. This is progress in meeting Grothendieck's objection that correspondences on jacobians can cause great complications in generalizing Serre's \OIT. 

In our \cite[\S5]{Fr18} example, that complication is measured by the appearance of distinct Hurwitz space components. The {\sl lift invariant\/} accounts for most. Still, others pose a particular problem at this time -- we know them, but not their definition fields, as {\sl Harbater-Mumford\/} components. 

That problem occurs because there is more than one with the same 0 lift invariant as discussed around Prop.~\ref{A43-2} and Thm.~\ref{level0MT}.  As in Thm.~\ref{genuscomp}, we know their braid orbits on the Nielsen class, though modular curves and complex multiplication are not a guide.

\subsection{The \TL\ of the \MT\ program} \label{explainingFrattini}  After a prelude we have divided this section into three subsections: 

\begin{itemize} \item \S\ref{pre95}, prior to 1995; 
\item  \S\ref{95-04}, the next decade of constructions/main conjectures, then,  
\item  \S\ref{05-to-now} of progress on the main \MT\ conjectures and our approach to the \OIT. \end{itemize} 

Conj.~\ref{OITgen} concludes the section with our \MT\ formulation of the \OIT. 

\subsubsection{Organization} \label{organization} This \TL \ picks out the gist of the main papers. Each item connects to a fuller explanation of the history and significance of the contribution. Three html files provide handy reminders on basics guiding progress on {\bf M}(odular) {\bf T}(ower)s. We refer to sections in them. 

The {\bf R}(egular) {\bf I}(nverse) {\bf G}(alois) {\bf P}(roblem), its literature and how  Nielsen classes (Def.~\ref{NielsenClass}) relate to the \MT\ conjectures: $$\text{http://www.math.uci.edu/$\tilde{\ }$mfried/deflist-cov/RIGP.html.}$$ 

Nielsen classes are a genus generalization that separates sphere covers into recognizable types. We use the previous notation: $\ni(G,\bfC)$ for (unordered) conjugacy classes $\bfC = \{\row \C r\}$ of a finite group $G$. {\bf R}(iemann)-{\bf H}(urwitz) \eqref{RH} gives the corresponding sphere cover genus $\geng\eqdef \geng_\bg$, if $(\row g r)\in \ni(G,\bfC)$. We have examples in $$\text{http://www.math.uci.edu/$\tilde{\ }$mfried/deflist-cov/Nielsen-Classes.html.}$$ 
 
The {\bf B}(ranch) {\bf C}(ycle) {\bf L}(emma) ties definition fields of covers (and their automorphisms) to branch point locations: $$\text{http://www.math.uci.edu/$\tilde{\ }$mfried/deflist-cov/Branch-Cycle-Lem.html.}$$ Especially, it gives the precise definition field of Hurwitz families defined by Nielsen classes $\ni(G,\bfC)$ for any equivalences (as in \S\ref{equivalences}). While this result is key for number theory results (on the \RIGP, and generalizing Serre's \OIT), we emphasize just one easily stated corollary. 

\begin{thm} \label{bclthm} The total space of an inner Hurwitz space, $\sH(G,\bfC)^{\inn}$ together with its extra structure as a moduli space over (even as a reduced Hurwitz space), of $\prP^1_z$ covers,  is a cyclotomic field given in the response to \eql{mtneed}{mtneedc} as \eqref{bcl}. In particular, an inner Hurwitz (moduli) space  structure is defined over over $\bQ$ if and only if $$\text{$\bfC^u = \bfC$  for all $u\in (\bZ/N_{\bfC})^*$: $\bfC$ is a {\sl rational union}.}$$ where ${}^u$  means to put each element of $\bfC$ to the power $u$. 
\end{thm}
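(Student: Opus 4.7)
\textbf{Proof proposal for Theorem \ref{bclthm}.} The plan is to establish the Galois action formula on branch cycles first (the BCL in its raw form), then translate it into a statement about the Hurwitz space $\sH(G,\bfC)^{\inn}$ viewed as a moduli space over $U_r$, and finally read off the field of definition.

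First I would fix notation: let $N_\bfC$ be the exponent of the classes in $\bfC$ (equivalently the l.c.m.\ of the element orders), and let $\chi: G_\bQ \to (\hat\bZ)^*$ be the cyclotomic character, whose reduction $\chi_{N_\bfC}: G_\bQ \to (\bZ/N_\bfC)^*$ controls the situation. The first key step is to establish the raw branch cycle formula: if $f: W \to \prP^1_z$ is a cover defined over a number field $K$, with branch points $\bz=(z_1,\dots,z_r)$ and branch cycle description $\bg=(g_1,\dots,g_r)$ with $g_i \in \C_i$, then for $\sigma \in G_K$ the conjugate cover $f^\sigma$ has branch points $(\sigma(z_1),\dots,\sigma(z_r))$ and a branch cycle description whose $i$-th entry lies in the class $\C_i^{\chi(\sigma)^{-1}}$. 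The derivation proceeds by comparing the local monodromy representation at $z_i$ with that at $\sigma(z_i)$: the local inertia generator is determined by the choice of a uniformizer, and the Galois action on uniformizers $(z-z_i)^{1/\bar e_i}$ in $\bC((z-z_i)^{1/\bar e_i})$ transforms by $\chi(\sigma)^{-1}$ on roots of unity, as in \eqref{powseries}. This is essentially the content of the html reference cited in the statement.

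Next I would transfer this to the moduli interpretation. Because (for inner equivalence) $\sH(G,\bfC)^{\inn} \to U_r$ is a finite \'etale cover parametrizing inner-equivalence classes of covers in $\ni(G,\bfC)^{\inn}$, a field automorphism $\sigma \in G_\bQ$ sends a moduli point $[\,f\,]$ over $\bz$ to a moduli point over $\sigma(\bz)$. By Step 1, the Nielsen class of $f^\sigma$ is obtained from that of $f$ by replacing $\bg=(g_1,\dots,g_r)$ by $(g_1^{\chi(\sigma)^{-1}},\dots,g_r^{\chi(\sigma)^{-1}})$, modulo the permutation of branch points induced by $\sigma$ acting on $U_r$ and up to inner conjugation in $G$. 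Inner equivalence absorbs the conjugation ambiguity. Hence $\sigma$ stabilizes the moduli functor, i.e.\ induces an automorphism of $\sH(G,\bfC)^{\inn}$ over its image in $U_r^\sigma = U_r$, if and only if the unordered multiset of classes $\bfC$ is invariant under $u \mapsto \bfC^u$ for $u = \chi(\sigma)^{-1} \bmod N_\bfC$.

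Setting $H = \{u \in (\bZ/N_\bfC)^* : \bfC^u = \bfC\}$, the previous paragraph shows that the field of definition of the moduli space $\sH(G,\bfC)^{\inn}$ (equipped with its map to $U_r$ and the universal family when $Z(G)=1$, as in \cite[\S4]{BFr02}) is exactly the fixed field $\bQ(\zeta_{N_\bfC})^{\chi^{-1}(H)}$, which by Galois theory is the unique subfield of $\bQ(\zeta_{N_\bfC})$ corresponding to $H$. In particular this field is $\bQ$ precisely when $H = (\bZ/N_\bfC)^*$, i.e.\ when $\bfC^u = \bfC$ for every $u$ coprime to $N_\bfC$ -- the rational union condition of the theorem.

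The main obstacle I anticipate is Step 1: carefully pinning down the sign/direction of the exponent in the branch cycle formula, which depends on the convention for comparing the \'etale fundamental group of $U_\bz$ with its topological counterpart, and on the convention for classical generators in \eqref{generatorspi}. The ambiguity $\chi(\sigma)$ vs.\ $\chi(\sigma)^{-1}$ does not affect the final conclusion, since $H$ is a subgroup of $(\bZ/N_\bfC)^*$ closed under inversion, but it does need to be tracked carefully. A secondary technical point is verifying that when $Z(G)\neq 1$ the moduli space is not fine and one must work at the level of a coarse moduli space (or pass to a rigidification); the descent argument is identical because inner equivalence already neutralizes the cocycle obstruction, and this is exactly the setup handled in the reference provided in \eqref{bcl}.
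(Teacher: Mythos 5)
Your proposal reconstructs the Branch Cycle Lemma argument exactly as the paper invokes it (via Lemma \ref{branchCycle} and the references \cite{Fr77}, \cite{Fr12}, \cite{FrV91}): the Galois twist on the local uniformizers $(z-z_i)^{1/\bar e_i}$ yields the $\chi(\sigma)^{-1}$-power action on branch cycles, inner equivalence absorbs the conjugation ambiguity, and the stabilizer subgroup $H=\{u:\bfC^u=\bfC\}$ in $(\bZ/N_\bfC)^*$ cuts out the cyclotomic field $\bQ_{G,\bfC}$ of \eqref{bcl}. This is the same route the paper takes, and you correctly flag the two delicate points (the exponent-sign convention, harmless because $H$ is inversion-closed, and coarse vs.\ fine moduli when $Z(G)\ne 1$) that the cited sources handle.
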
 

For this reason we have used rational unions of conjugacy classes in all examples.  Individual \MT s have an attached prime (denoted $p$ in the early papers, but we use $\ell$ here because of the latest work). 

When the \MT\ data passes a lift invariant test, then the \MT\ is an infinite (projective) system of nonempty levels (result stated precisely in our discussion of \cite{Fr06}). Each level, $\sH_k'$, has a minimal compactification $\bar \sH'_k$, that is a normal projective algebraic variety. All such compactified  levels cover the classical $j$-line, $\prP^1_j$, when $r=4$, and an $r\nm 3$ dimensional generalization, $J_r$, of it for larger $r$. 

Indeed, off the cusps, each level is a component of a {\sl reduced\/} Hurwitz space (as in \S\ref{equivalences}).  Most modern applications of algebraic equations requires more data than is given by the moduli of curves of a given genus, or even of Shimura varieties. Hurwitz spaces, however, do carry such data and retain the virtue of having moduli properties. 

\MT s come with what we call the usual \MT\ conditions: 
\begin{edesc} \label{MTconds} \item \label{MTcondsa} Each has an attached group $G$, and a collection of $r$ conjugacy classes, $\bfC$ in $G$ with $\ell'$ elements (of orders prime to $\ell$). 
\item \label{MTcondsa} Further, $G$ is $\ell$-perfect: $\ell$ divides $|G|$, but $G$ has no surjective homomorphism to $\bZ/\ell$.
\end{edesc}  For $G$ a dihedral group, with $\ell$ odd and $r=4$, we are in the case of {\sl modular curve towers}. So, \MT s generalizes modular curves towers. Since there are so many $\ell$-perfect groups, the generalization is huge. 

The Main Conjectures are these: 
\begin{edesc} \label{MainConj} \item \label{MainConja} High tower levels have general type; and 
\item  \label{MainConjb}  even if all levels have a fixed definition field $K$, finite over $\bQ$, still $K$ points disappear (off the cusps) at high levels. 
\end{edesc} Bringing particular \MT s alive plays on {\sl cusps}, as do modular curves. Cusps already appeared in Thm.~\ref{genuscomp} as the disjoint cycles of $\gamma_\infty$ (corresponding to the points over $\infty$ on the $j$-line). \S\ref{05-to-now}  of our TimeLine includes precise comparison of \MT\ cusps with those of modular curve towers, consequences of this, and two different methods that have given substantial progress on the Main Conjectures. See the argument of \sdisplay{\cite{CaTa09}}  for why \eql{MainConj}{MainConja} implies \eql{MainConj}{MainConjb} when $r=4$. 

The graphical device Ð the $\sh$(ift)-incidence matrix used in the table above Prop.~\ref{A43-2} Ð displays these cusps, and the components -- corresponding to blocks in the matrix -- in which they fall.  

Several papers emphasize, though, that cusps for Hurwitz spaces often have extra structure -- meaningful enough to suggest special names for them -- that comes from the group theory in ways that doesn't appear in the usual function theory approach to cusps. The use of the names {\sl Harbater-Mumford\/} and {\sl double identity\/} cusps in Thm.~\ref{level0MT} are examples of these.  

{\sl Shimura\/} varieties are another generalization of modular curves. They also have towers, and primes, etc. The connection of abelian varieties to \MT s has been made in several ways. There is one easily stated standout: The {\bf S}(trong) {\bf T}(orsion) {\bf C}(onjecture) on torsion points on abelian varieties implies the rational point conjecture on \MT s (see \cite{CaD08}).

It is, however, by labeling \MT\ cusps that we see tools for generalizing Serre's \OIT, especially through recognizing $\ell$-Frattini covers and using reduced Hurwitz spaces (defining tower levels). 

\subsubsection{Lessons from Dihedral groups Ð Before '95} \label{pre95} 

This section goes from well-known projects to their connection with the \MT\ program. The references to Serre's work was around two very different types of mathematics: His \OIT, with its hints of a bigger presence of Hilbert's Irreducility Theorem, and his desire to understand the difficulty of regularly realizing the \Spin\ cover of $A_n$. That so much could be made from dihedral groups, still seems amazing, though it wasn't until the formulation of \MT s, that it was possible to see that.  

This section concludes with \sdisplay{\cite{DFr94}}, which included e-mail exchanges with Mazur. It sets the stage for the division of the project into two branches during the 1st decade of the 21st Century.  

The arithmetic concentrated in the hands of Pierre D\`ebes and his collaborators Cadoret, Deschamps and Emsalem. The structure of particular \MT s -- based on homological algebra and the geometry of the spaces (cusps and components) -- follows my papers and my relation to Bailey and Kopeliovic with special impetuses from the work discussed with Liu-Osserman and Weigel. The effects of quoted work of Ihara, Matsumoto and Wewers, all present at my first talks on \MT s, is harder to classify. 

\Sdisplay{\cite{Sh64}} 
 I studied this source during my two year post-doctoral 67--69 at IAS (the {\sl Institute for Advanced Study}). Standout observation: Relating a moduli space's properties to objects represented by its points, through the {\sl Weil co-cycle condition\/}. That lead to defining the {\sl fine moduli\/} condition on absolute and inner Hurwitz spaces, and their reduced versions (respectively, \cite[Thm.~5.1]{Fr77} and \cite[\S4.3]{BFr02}).  
 
 It is from fine moduli, for example, that we draw positive solutions for a group $G$ toward the \RIGP\  from rational points on inner moduli spaces. Those respective conditions (as in \eqref{eqname}) are that for $T: G \to S_n$, the stabilizer of an integer, $G(T,1)$, in the representation $T$ is its own normalizer as a subgroup of $G$ (absolute equivalence); and $G$ is centerless (inner equivalence). The result of fine moduli is that there is a (unique) total space over the Hurwitz space representing the covers corresponding to its point.  
 
 The fine moduli condition, with the addition of reduced equivalence to Nielsen classes (Def.~\ref{redaction}) is in \cite[\S4.3]{BFr02}, as in our example Ex.~\ref{exA43-2}. 
 
Results from it: The {\sl {\bf B}(ranch){\bf C}(ycle){\bf L}(emma)\/} (see the html file in \S\ref{organization}) and its early uses starting with the solution of Davenport's problem as discussed at the beginning of \cite{Fr12}, for problems not previously considered as moduli-related. A later refined use: A model for producing \lq\lq automorphic functions\rq\rq\ on certain Hurwitz spaces as in \cite[\S6]{Fr10}, supporting the Torelli analogy through $\theta$ nulls on a Hurwitz space (\S\ref{Frat-Groth}). 

\Sdisplay{\cite{Se68}} I saw Serre give one lecture on his book while I was in my second year (1968-1969) post-doctoral at IAS. During that time his amenuenses were writing his notes. I asked them questions and I interpreted the hoped for theorem -- a little different than did Serre -- as this. For each fixed $\ell$ as $j'\in \bar\bQ$ varies, consider the field, ${}_\ell\sK_{j'}$, generated over $\bQ(j')$ by the coordinates of any projective sequence of points $${}_\ell\bx'\eqdef \{x_k'\in X_0(\ell^{k\np1})\}_{k=0}^\infty \mid \dots \mapsto x_{k\np1}'\mapsto x_k'\mapsto \dots \mapsto j'.$$ 

Though such fields don't vary smoothly, the eventual discovery was that there are two distinct types of points, called $\CM$ (for {\sl complex multiplication}) and $\GL_2$, of very different natures.  

Denote the  Galois closure of ${}_\ell\sK_{j'}/\bQ(j')$ by ${}_\ell\hat\sK_{j'}$, and its Galois group, the {\sl decomposition group\/} at $j'$, by ${}_\ell \hat G_{j'}$.\footnote{Potential confusion of notation: $j$ here is not an index, but the traditional variable used for the classical $j$-line.} Imitating the notation of the arithmetic monodromy group of a cover in Def.~\ref{arithmon}, denote the arithmetic monodromy group of the cover 
$${}_\ell \phi_{j,k}: X_0(\ell^{k\np1})\to \prP^1_j \text{ by ${}_\ell \hat G_{{}_\ell \phi_{j,k}}\eqdef {}_\ell \hat G_{j,k} $ and ${}_\ell \hat G_j$ its projective limit.} $$ 
Similarly, without the $\hat{}$, the projective limit of the geometric monodromy is ${}_\ell G_j$. For $j'\in \bar \bQ$, 
 in a natural way ${}_\ell \hat G_{j'}\le {}_\ell \hat G_j$.\footnote{The points, $\{j=0,1\}$  of ramification of the covers are special. We exclude them here, though a more precise result (due to Hilbert) includes them as \CM\ points, too.} 
In Def.~\ref{frattdef} we have what is a Frattini cover (of profinite groups). If $\psi: H\to G$ is a Frattini cover, refer to it as $\ell$-Frattini if $\ker(\psi)$ is an $\ell$-group. 

The key definition that has guided \cite{Fr18} was this. 

\begin{defn} \label{evenfratt} Refer to a sequence of covers of finite groups $$\dots \to H_{k\np1}\to H_k\to \dots \to H_1 \to H_0=G$$ as {\sl eventually Frattini\/} (resp.~eventually $\ell$-Frattini) if there is a $k_0$ for which $H_{k_0\np l} \to H_{k_0}$ is a Frattini (resp.~$\ell$-Frattini) cover for $k\ge0$. \end{defn} 

If the projective limit of the $H_k\,$s is $\tilde H$, then we just say that it is eventually Frattini since the same property will hold for any cofinal sequence of quotients. Note, too: any open subgroup of $\tilde H$ will also be eventually Frattini (resp.~$\ell$-Frattini). 

What made an impact on our approach from \cite{Se68} were these points, in the preceeding notation. 

\begin{edesc} \label{serreles} \item \label{serrelesa} For each fixed $\ell$, ${}_\ell G_j$ is eventually $\ell$-Frattini. Further, for $\ell> 3$, it is $\ell$-Frattini (right from the beginning). 
\item \label{serrelesb} If for some prime $p$, $j'\in \bar \bQ$ is not integral at $p$, then the intersection ${}_\ell \hat G_{j'}\cap {}_\ell G_j$ is open in ${}_\ell G_j$. 

\item \label{serrelesc} If a given $j'$ is of complex multiplication type (Def.~\ref{CMdef}), then the intersection of ${}_\ell \hat G_{j'}$ with ${}_\ell G_j$  is eventually $\ell$-Frattini. 

\item \label{serrelesd} From either \eql{serreles}{serrelesb} or \eql{serreles}{serrelesc}, you have only to get to a value of $k'$ with ${}_\ell \hat G_{j',k'}$ within the $\ell$-Frattini region to  assure achieving an open subgroup of the respective $\GL_2$ or $\CM$ expectation.  \end{edesc} 

The group ${}_\ell G_j$ in \eql{serreles}{serrelesa} is $\PSL_2(\bZ_\ell)$, though Serre frames his result differently, so his group is $\SL_2(\bZ_\ell)$. The distinction is not significant. 

We interpret \eql{serreles}{serrelesb} as giving an $\ell$-adic germ representating the moduli space -- through Tate's $\ell$-adically uniformized elliptic curve --- around the (long) cusp we call Harbater-Mumford on $X_0(\ell)$.  This is a model for  gleaning $G_{\bQ_\ell}$ action when $j$ is $\ell$-adically \lq\lq close to\rq\rq\ $\infty$.  

Suppose $K$ is a complex quadratic extension of $\bQ$. The technical point of complex multiplication is the discussion of 1-dimensional characters of $G_K$ on the $\bQ_\ell$ vector space -- Tate module, or 1st $\ell$-adic \'etale cohomomology -- of an elliptic curve with complex multiplication by $K$. On the 2nd $\ell$-adic \'etale cohomomology it is the cyclotomic character, while on the 1st there is no subrepresentation of any power of the cyclotomic character. 

Only a part of abelian extensions of $K$ are cyclotomic -- generated by roots of 1, a result that generalizes to higher dimensional complex multiplication in \cite{Sh64}. Much of \cite{Se68} is taken with \eql{serreles}{serrelesc}. The groups there are primarily the (abelian) ideal groups of classical complex multiplication.  

As \cite{Ri90} emphasizes, Serre's book is still relevant, especially for the role of abelian characters, those represented by actions on Tate modules (from abelian varieties), and those not. 

We reference this discussion in many places below. The full (and comfortable) completion of Serre's \OIT\ awaited replacement of an unpublished Tate piece by ingredients from Falting's Thm. \cite{Fa83} (as in \cite{Se97b}).

\Sdisplay{\cite{Fr78}} This was the forerunner of the always present relation between absolute, $\sH(G,\bfC)^\abs$, and inner, $\sH(G,\bfC)^\inn$, Hurwitz spaces \eqref{eqname}.  The latter naturally maps -- via the equivalence -- to the former. The section \cite[\S3]{Fr78} -- {\sl Determination of arithmetic monodromy from branch cycles\/} -- was based on the idea I informally call {\sl extension of constants}.   

The definition field of an absolute cover in a Hurwitz family (represented by a point $\bp\in \sH(G,\bfC)^\abs$)  would have its field extension from going to the Galois closure of the cover measured by the coordinates of a point, $\hat \bp\in \sH(G,\bfC)^\inn$  above $\bp$. \cite[Thm.~1]{FrV91}  became the standard codification of this relation. It works the same for reduced spaces. 

\cite[\S2]{Fr78} was a special case of it, where $G=D_\ell$, $\ell$ odd, and $\bfC=\bfC_{3^4}$ is 4 repetitions of the involution conjugacy class. In this case, it was describing the pair of fields $(\bQ(\bp),\bQ(\hat \bp)$), for $\bp\in \sH(G,\bfC)^{\abs}$ and $\hat \bp\in \sH(G,\bfC)^\inn$ over it. As in \S\ref{earlyOIT-MT}, this was the main case in describing prime degree ($\ell$) rational functions having the Schur cover  property \eqref{schurcover}. 

More precisely, consider the cover $f_\bp: W\to \prP^1_z$, from $\bp\in \sH(G,\bfC)^\abs$ with $\hat f_\bp$ its Galois closure. Here $W$ is isomorphic to $\prP^1_w$ over $\bQ(\bp)$ (because $\ell$ is odd). If  $\bQ(\bp)\not =\bQ(\hat \bp)$,  with extension of constants from \eql{serreles}{serrelesc} $$\text{$f_\bp$ has the Schur covering property over $\bQ(\bp)$ (Thm.~\ref{schurcoverfiber}).}$$ 

If $\bQ(\bp) =\bQ(\hat \bp)$, then $\hat f_\bp$ is an involution regular realization of $D_\ell$ over $\bQ(\bp)$ (with 4 branch points). \sdisplay{ \cite{DFr94}}, as in Thm.~\ref{Dihmain}, phrases the complete classification of involution realizations of dihedral groups. This qualifies as the \lq\lq easiest\rq\rq\  case of one of the untouched problems on the \RIGP, and straightforwardly justifies why this problem shows the \RIGP\ generalizes Mazur's Theorem. 

From each elliptic curve over $\bQ$ with non-integral $j$-invariant, the $\GL_2$ part of the \OIT\ \eql{serreles}{serrelesb}, gives explicit production of Schur cover rational functions \eqref{schurcover} of degree $\ell^2$, for infinitely many primes $\ell$. As with the \CM\ case, the distinction is measured by the difference between $\bQ(\bp)$ and $\bQ(\hat \bp)$ with $\hat\bp$ on the inner space over $\bp$ in the absolute space. 

When they are different, the degree $\ell^2$ rational function over $\bQ(\bp)$ decomposes, over $\bQ(\hat \bp)$, into two rational functions of degree $\ell$, with no such decomposition over $\bQ(\bp)$ (\cite{GMS03} and \cite[Prop.~6.6]{Fr05}). This is a phenomenon that cannot happen with {\sl polynomials\/} of degree prime to the characteristic, a fact exploited for the Schur cover property (as in  \cite{Fr70}). 

Conj.~\ref{OITgen} -- expressing our best guess for what to expect of an \OIT\ from a \MT, is the result of thinking how the relation  between these two different Schur covers compares with Serre's \OIT. Especially considering what is possible to prove at this time, both theoretically and explicitly. 

For example, the \CM\ cases are famously explicit. In particular, just as in Schur covers given by  polynomials (cyclic and Chebychev), the nature of the exceptional set $\exc_{f,K}$ in \eqref{capexc} is a union of specific arithmetic progressions (in ray class groups), and therefore it is possible to decide about compositions of exceptionals if they are exceptional.  

\begin{defn} \label{CMdef} We call $j'\in \bar\bQ$ a complex multiplication point if the elliptic curve with $j$ invariant equal to $j'$ has a rank 2  endomorphism ring. In that case that ring is identifies with a fractional ideal in a complex quadratic extension $K$ of $\bQ$.  \end{defn} 

The main point is that $G_K$ will respect those endomorphisms, and therefore it will limit the decomposition group of a projective system of points on the spaces $\{X_0(\ell^{k\np1})\}_{k=0}^\infty$. Originally, as one of Hilbert's famous problems, Kronecker and Weber used this situation to describe the abelian extensions of complex quadratic extensions of $\bQ$. 

The proof of \cite[IV-20]{Se68} concludes the proof that for $j'$ non-integral (so not complex multiplication),  the Tate curve shows there is no decomposition of the degree $\ell^2$ rational function. It even gives the following result. 

\begin{thm}[\OIT\ strong form] \label{OITsf} Suppose $j'\in \bar \bQ$ is not a complex multiplication point. Then, not only is it a $\GL_2$ point for any prime $\ell$, but the decomposition group $G_{j'}$ is actually $\GL_2(\bZ_\ell)/\{\pm1\}$ (rather than an open subgroup of this) for almost all primes $\ell$. \end{thm}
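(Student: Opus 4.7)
The plan is to analyze the mod-$\ell$ Galois representation $\bar\rho_\ell\colon G_{\bQ(j')}\to \GL_2(\bF_\ell)$ attached to the $\ell$-torsion of an elliptic curve $E_{j'}/\bar\bQ$ with $j$-invariant $j'$, show it is surjective for all but finitely many primes $\ell$, and then promote this to the $\ell$-adic statement by the $\ell$-Frattini mechanism of \eql{serreles}{serrelesa}. The decomposition group ${}_\ell \hat G_{j'}$ in the statement is exactly the image of the full $\ell$-adic representation $\rho_\ell$ (via the moduli identification \eql{mainexs}{mainexsd} of $\{X_0(\ell^{k\np1})\}$), so surjectivity of $\rho_\ell$ mod $\{\pm 1\}$ is precisely what we want.

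First I would reduce to the finite problem. For $\ell\ge 5$ the kernel of $\SL_2(\bZ_\ell)\to \SL_2(\bF_\ell)$ is pro-$\ell$ and lies in the Frattini subgroup, so ${}_\ell G_j$ is $\ell$-Frattini from the outset, not merely eventually $\ell$-Frattini in the sense of Def.~\ref{evenfratt}. By \eql{serreles}{serrelesd}, once $\bar\rho_\ell$ is surjective onto $\GL_2(\bF_\ell)$, the closed subgroup $\rho_\ell(G_{\bQ(j')})$ of $\GL_2(\bZ_\ell)$ covers the whole group mod $\{\pm 1\}$. This converts a profinite question into a finite group-theoretic one.

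Second, I would run through the maximal proper subgroups of $\GL_2(\bF_\ell)$ (Dickson's classification, $\ell\ge 5$): Borel subgroups, normalizers of a split Cartan, normalizers of a non-split Cartan, and preimages of the exceptional subgroups $A_4$, $S_4$, $A_5\subset \PSL_2(\bF_\ell)$. Containment of the image of $\bar\rho_\ell$ in a Borel forces a $\bQ(j')$-rational cyclic $\ell$-isogeny on $E_{j'}$; the split Cartan case forces a pair of such isogenies after a quadratic twist; in each case, because $j'$ is not a \CM\ point (Def.~\ref{CMdef}), Faltings' isogeny theorem \cite{Fa83} leaves only finitely many admissible $\ell$. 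The exceptional subgroups have order bounded independently of $\ell$, so combined with the surjectivity of the determinant onto $(\bZ/\ell)^*$ via the cyclotomic character, they are excluded for $\ell$ large.

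The hard part will be the non-split Cartan normalizer, because its containment yields no $\bQ(j')$-rational isogeny directly; one must pass to a quadratic extension $F/\bQ(j')$ to exhibit an $\ell$-isogeny, and only then apply Faltings over $F$. This is the step that in \cite{Se68} rested on an unpublished Tate note, and whose unconditional replacement in \cite{Se97b} uses \cite{Fa83} to show that an infinite family of such $\ell$ would produce infinitely many isogenies among quadratic twists of $E_{j'}$, contradicting finiteness. Once this obstacle is cleared, the Frattini-lift of the first step upgrades mod-$\ell$ surjectivity to $\rho_\ell(G_{\bQ(j')}) = \GL_2(\bZ_\ell)$ (mod $\{\pm 1\}$) for almost all $\ell$, and the moduli identification of the tower ${}_\ell\sX_0$ with $\ni(D_{\ell^{k\np1}},\bfC_{2^4})^{\abs,\rd}$ gives ${}_\ell \hat G_{j'}=\GL_2(\bZ_\ell)/\{\pm 1\}$ as asserted. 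Openness at each individual $\ell$ (the weaker claim, for every prime) follows along the way, using \eql{serreles}{serrelesb}: at a prime of multiplicative reduction of $E_{j'}$, Tate uniformization supplies an inertia element of non-trivial unipotent shape whose topological closure combines with the Frattini property to force an open image.
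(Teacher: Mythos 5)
The paper does not prove Theorem \ref{OITsf}. It attributes the result to \cite[IV-20]{Se68} (the Tate-curve argument for non-integral $j'$) and remarks only that Faltings \cite{Fa83}, as in \cite{Se97b}, replaces Tate's unpublished finiteness result for the general case. So there is no competing paper proof to set yours against; your proposal is essentially a correct expansion of the standard Serre argument the paper cites as given, and its overall structure (Dickson classification, non-split Cartan normalizer as the hard case handled by Faltings, $\ell$-Frattini lift, moduli identification of the $X_0$-tower) is the right one.

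Two steps need repair. First, ruling out the exceptional projective images $A_4$, $S_4$, $A_5$ by determinant surjectivity alone does not work: the preimage of any of these in $\GL_2(\bF_\ell)$ contains all scalars, so its determinant image is already $\pm(\bF_\ell^*)^2$, which equals $\bF_\ell^*$ whenever $-1$ is a nonsquare mod $\ell$. Serre's actual exclusion looks at the image of inertia at a prime over $\ell$: the fundamental characters of level $1$ (ordinary) or level $2$ (supersingular) produce an element whose projective order grows with $\ell$ and hence eventually exceeds $60$. Second, your Tate-curve route to openness at each individual $\ell$, via \eql{serreles}{serrelesb}, is available only when $j'$ has some nonintegral place. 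The theorem covers integral non-CM $j'\in\bar\bQ$ as well (potentially good reduction everywhere), and for those the weak claim at each $\ell$ rests on the Lie-algebra portion of \cite{Se68}, not a single unipotent inertia element combined with Frattini lifting.
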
 

Falting's theorem \cite{Fa83} (as in \cite{Se97b})  replaces the unpublished result of Tate. The use of Faltings in both versions of the $r=4$ Main \MT\ conjecture for \MT s mean that both have inexplicit aspects, though the results are different on that (see  \sdisplay{\cite{CaTa09}} and \sdisplay{\cite{Fr06}}). 

So even today, being explicit on Thm.~\ref{OITsf} in the Schur covering property for the $\GL_2$ case still requires non-integral j-invariant \cite[\S6.2.1]{Fr05}. \cite[IV-21-22]{Se68} references Ogg's example \cite{O67} (or \cite[\S 6.2.2]{Fr05}), to give  $j'\in \bQ$ with the decomposition group $G_{j'}$ equal $\GL_2(\bZ_\ell)/\{\pm1\}$ for {\sl all\/} primes $\ell$. 

\Sdisplay{\cite{Ih86}} The similar titles with \cite{Fr78} gives away the similar influence of Shimura. Both played on interpreting braid group actions, a monodromy action that captures data from curves, rather than from abelian varieties. 

The Ihara paper has a moduli interpretation of \lq\lq complex multiplications\rq\rq\ required to generate the field extension giving the second commutator quotient of $G_\bQ$.

Down-to-Earth result from it: Generating the second commutator (arithmetic) extensions using Jacobi sums derived from Fermat curves. Abstract result from it: An interpretation of Grothendieck-Teichm\"uller on towers of Hurwitz spaces \cite{IM95}.

\Sdisplay{\cite{Se90a}}  At the top of \S\ref{overview},  example \eql{mainexs}{mainexsb} started my interaction over this approach to the \OIT.  That expanded quickly into using the Universal Frattini cover to construct the original \MT s.  

For simplicity assume a finite group $G$ is $\ell$-perfect \eql{MTconds}{MTcondsa}. Then, the lift invariant for a prime $\ell$ described below comes from considering {\sl central\/} Frattini extensions with $\ell$ group kernels, the topic of \eql{bifurcation}{bifurcationa}. Using this gave precise statements on components of Hurwitz spaces. 

The sequence \eqref{FrVSns} is an easy to state result based on this tool,  giving a presentation of $G_\bQ$. It also produced a simply-stated conjecture. 
Assume for $K\subset \bar\bQ$ that $G_K$ is a projective profinite group.\footnote{Shafarevich's conjecture is the special case that $K$ is $\bQ$ with all roots of 1 adjoined.}

\begin{guess}[Generalization of Sharafavich's Conjecture] \label{FrVConj} Then, $K$ is Hilbertian if and only if $G_K$ is profree.\footnote{That $G_K$ profree implies it is Hilbertian is a consequence of a version of Chebotarev's field crossing argument. The \cite{FrV92} result starts with the assumption that $K$ is {\bf P}(seudo){\bf A}(lgebraically){\bf C}(losed).}  \end{guess} 

That sounds good, though it didn't lead to an understanding of how to use Nielsen classes as practical tools in many problems in algebra. For that reason we have revamped how \eqref{FrVSns} arises, recasting it as a classical mathematics connection between the \RIGP\ and the \OIT, as epitimized in \eql{bifurcation}{bifurcationb}, in \cite{Fr18}. Historical support for that is what follows here and in the next two discussions. 

Arguably, the most famous frattini central extension arises in {\sl quantum mechanics\/} from the spin cover, $\psi: \Spin_n\to O_n(\bR)$, $n\ge 3$, of the orthogonal group.  That is Wolfgang Pauli's explanation of the spin of electrons around atoms as an hermitian observable. Regard the kernal of $\psi$ as  $\{\pm 1\}$. The natural permutation embedding of $A_n$ in $O_n$  induces the $$\text{Frattini cover $\psi: \Spin_n \to A_n$, abusing notation a little.}$$ 

A braid orbit $O$ in $\ni(A_n,\bfC)$, with $\bfC$ conjugacy classes consisting of odd-order elements, passes the (spin) lift invariant test if the natural (one-one) map $\ni(\Spin_n,\bfC) \to \ni(A_n,\bfC)$ maps onto $O$. In this case, the main result of \cite{Se90a} was that if the genus attached to $\ni(A_n,\bfC)$ is 0, then the test depends only on the Nielsen class and not on $O$. The short proof of \cite[Cor.~2.3]{Fr10} is akin to the original discussion with Serre. 

Here is a particular case of this, in which we know much more.  Assume $\bfC=\bfC_{3^r}$ consists of the repetition $r$ times of the conjugacy class of 3-cycles ($r=4$ in example \eql{mainexs}{mainexsb}). Then \cite[Thm.~A]{Fr10} says that if $r=n\nm1$ (the cover has genus 0, it's minimal value by Riemann Hurwitz), then there is one braid orbit in $\ni(A_n,\bfC_{3^{n\nm1}})$. 

Not only that, but if $g\in A_n$ is a 3-cycle, then it has a unique lift $\tilde g$ as an order 3 element in $\Spin_n$, In that case $$\text{for $\bg\in \ni(A_n,\bfC_{3^{n\nm1}}), \prod_{i=1}\tilde g_i= (-1)^{n\nm1}\eqdef s_{\Spin_n/A_n}(\bg)$.}$$ 

Now consider a cover $\phi_\bg: \prP^1_w\to \prP^1_z$ representing $\bg$ as given by the conditions \eqref{bcycs}. Then, consider constructing $Z \to \hat W \to \prP^1$ with $\hat W$ the Galois closure of $\phi_\bg$, and $Z\to \prP^1_z$ Galois with group $\Spin_n$.  Result: There is  an unramified $Z\to \hat W$ if and only if $\bg$ is in the image of $\ni(\Spin_n,\bfC_{3^r})$. 

\cite[Thm.~B]{Fr10} says, for $r\ge n$, the two braid orbits on $\ni(A_n,\bfC_{3^r})$ are distinguished by their lift invariants. See \sdisplay{\cite{Fr02b}}  and  \sdisplay{\cite{We05}}. 

This example, including using the same naming of the same order lift class, $\C_3$, of elements of order 3 in both $A_n$ and $\Spin_n$, has many of the ingredients that inspired the use of the Universal Frattini cover $\tilde G$. 

The conjugacy class $\C_3$ has the same cardinality in $\Spin_n$ as it has in $A_n$. If we included, even once, a product of two disjoint 2-cycles as an element of the Nielsen class $\ni(A_n,\bfC)$, this would kill the lift invariant. These examples are one of the main considerations of the Main Theorem of \cite{FrV92}, which has been considerably revamped and expanded in \cite{Fr18}.    

Results inspired by it: There are three kinds of results affected by  the appearance of central Frattini extensions.  
\begin{edesc} \item Those that describe precisely the number of components on a Hurwitz space $\sH(G,\bfC)$ assuming the high multiplicity of appearance of each conjugacy class appearing in $\bfC$.  
\item Those that describe the precise obstruction to there being a nonempty \MT\ supported by the Nielsen class $\ni(G,\bfC)$. 
\item Those that help classify the cusps. \end{edesc}

\Sdisplay{\cite{Se90b}} A combination of this paper with \cite[\S 6]{Fr10} makes use of the lift invariant for any Nielsen class of odd-branched Riemann surface cover of the sphere in say, the Nielsen class $\ni(A_n,\bfC)$. 

It is a formula for the parity of a uniquely defined half-canonical class on any cover $\phi: W \to \prP^1_z$ in the Nielsen class that depends only on the spin lift invariant generalizing $s_{\Spin_n/A_n}$ defined above.  
From this \cite[\S6.2]{Fr10} produces {\sl Hurwitz-Torelli\/} automorphic functions on certain Hurwitz space  components through the production of even $\theta$-nulls.

\Sdisplay{\cite{Se92} and \cite{Fr94}}  
Serre didn't use the braid monodromy (rigidity) method.  Fried makes the connection to braid rigidity through Serre's own exercises. The difference shows almost immediately in considering the realizations of Chevalley groups of rank exceeding one. 

Serre records just three examples of Chevalley groups of rank exceeding one having known regular realizations at the time of his book. The technique of \cite{FrV91} and \cite{FrV92} constructed, for each finite group $G$ a covering group $G^*$, with no center, and infinitely many  collections of conjugacy classes $\bfC$ of $G^*$ with these properties: 
\begin{edesc} \label{Gconds}  \item \label{Gcondsa}  There is a (faithful) representation $T^*: G^*\to S_{n^*}$ for which the stabilizer of 1, $G^*(T^*,1)$ is self normalizing.  \item \label{Gcondsb}  $N_{S_{n^*}}(G^*)/G^*$ consists of all outer automorphisms of $G^*$. 
\item \label{Gcondsc}  The corresponding  inner Hurwitz spaces $\sH(G^*,\bfC)^\inn$ are irreducible and have definition field $\bQ$.   
\end{edesc}  

This allowed using the Hurwitz spaces as part of a {\sl field-crossing\/} argument over any {\bf P}(seudo){\bf A}(lgebraically){\bf C}(losed) field $F\subset \bar \bQ$ -- any absolutely irreducible variety over $F$ has a Zariski dense set of $F$ points. The result was that if $F$ was also Hilbertian, then $G_F$ is profree (see Conj.~\ref{FrVConj}), and a particular corollary was the presentation of $G_\bQ$ in \eqref{FrVSns}. 

Condition \eql{Gconds}{Gcondsa} is sufficient to say that any $K\subset \bar \bQ$ point on $\sH(G^*,\bfC)^\inn$ (satisfying \eql{Gconds}{Gcondsc}) corresponds to a $K$ regular realization of $G^*$, and therefore of $G$. This is because  $G^*$ will have no center, the condition that the inner Hurwitz space is then a fine moduli space. 

In myriad ways we can relax these conditions. Still, to use them effectively over say $\bQ$ requires finding $\bQ$ points on $\sH(G^*,\bfC)^\inn$. The usual method is to choose $\bfC$ so that $\sH(G^*,\bfC)^\inn$ is sufficiently close to the configuration space $U_r$, that $\bQ$ points are dense in it. If $r=4$, we may use Thm.~\ref{genuscomp} to compute the genus of $\sH(G^*,\bfC)^{\inn,\rd}$ and check the possibility it has genus 0, with good reason for it to have (at least one, so $\infty$-ly many) $\bQ$ points. 

Soon after \cite{FrV91}, V\"olklein and Thompson -- albeit powerful group theorists -- produced high rank Chevalley groups in abundance based on this method. Locating specific high-dimensional uni-rational Hurwitz spaces was the key here. Examples, and the elementary uses of Riemann's Existence Theorem, abound in \cite{Vo96}.

The Conway-Fried-Parker-Voelklein appendix of the Mathematische Annalen paper was a non-explicit method for doing that. \cite{Fr10}  shows what it can mean to be very explicit about this. 

\Sdisplay{\cite{DFr94}}  Thm.~\ref{Dihmain} gave the formulation of the Main \MT\ conjecture for dihedral groups. Equivalently, the \BCL\ (Thm.~\ref{bclthm}) implies there is one even integer $r (\le r^*)$ and for each $k\ge 0$,  a dimension ${r\nm 2}\over 2$ {\sl hyperelliptic Jacobian\/} (over $\bQ$) with a $\bQ(e^{2\pi i/\ell^{k\np1}})$ torsion point, of order $\ell^{k\np1}$,  on whose group $G_\bQ$ acts as it does on $\lrang{e^{2\pi i/\ell^{k\np1}}}$. 

The {\sl Involution Realization Conjecture\/} says the last is impossible: There is a uniform bound as $n$ varies on $n$  torsion points on any hyperelliptic Jacobian of a fixed dimension, over any given number field. (The only proven case, $r=4$, is the Mazur-Merel result bounding torsion on elliptic curves.) If a subrepresentation of the cyclotomic character occurred on the $\ell$-Tate module of a hyperelliptic Jacobian (see \cite{Se68}), the Involution Realization Conjecture would be blatantly false. 

This soon led to the formulation of the Main \MT\ conjecture in the discussion of \sdisplay{\cite{FrK97}}. Still missing: For even a single prime $\ell > 2$, find such cyclotomic $\ell^{k\np1}$ torsion points on any hyperelliptic Jacobian for all (even infinitely many) values of $k$.

\subsubsection{Constructions and Main Conjectures from 1995 to 2004} \label{95-04} Recall the definitions of Frattini cover $H\to G$ of groups (Def.~\ref{frattdef}) and $\ell$-perfect for a group $G$. In \S\ref{modtowdef} we alluded to a universal $\ell$-Frattini cover ${}_\ell\psi: {}_\ell\tilde G\to G$ for any group $G$. Here we construct it. 

Among its properties it is the minimal profinite cover of $G$ for which its $\ell$-Sylow is a pro-free pro-$\ell$ group. If $P$ is an $\ell$-group, then its Frattini subgroup is ${}_{\text{\rm fr}}P\eqdef P^\ell[P,P]$. As usual, we understand ${}_{\text{\rm fr}}P$ to be the closed subgroup of  $P$ with generators from the $\ell$ powers and commutators of $P$. Then,  $P\to P/{}_{\text{\rm fr}}P$ is a Frattini cover. 

Recover a cofinal family of finite quotients of $\fG \ell$ by taking $\ker_0=\ker({}_\ell \psi)$ denoting the sequence of {\sl characteristic kernels\/} of $\fG \ell$ as in \eqref{charlquots}: \begin{equation} \label{charlquots2} \ker_0> {}_{\text{\rm fr}} \ker_0\eqdef  \ker_1 \ge \dots \ge  {}_{\text{\rm fr}}\ker_{k{-}1} \eqdef \ker_k \dots,\end{equation}  $\fG \ell/\ker_k$ by $\tfG \ell k$, and the characteristic modules $\ker_k/\ker_{k'}={}_\ell M_{k,k'}$, etc.  

\Sdisplay{\cite{Fr95}}   Assume generating conjugacy classes, $\bfC$, of $G$.\footnote{The group generated by all entries of $\bfC$ is $G$.}  Then, with $N_{\bfC}$ the least common multiple of the orders of elements in $\bfC$:  
\begin{equation} \label{congcond} \begin{array}{c} \text{If $\ell \not| N_{\bfC}$,  Schur-Zassenhaus implies the classes $\bfC$ lift canonically}\\ \text{ to classes of elements of the same orders in each group $\tfG \ell k {}$.}  \end{array}\end{equation} 

This paper opens by recasting modular curves as Hurwitz spaces of sphere covers for the dihedral group by referring to their use in \sdisplay{\cite[\S2]{Fr78}}. Then, upon applying the construction of \eqref{charlquots2}, that any group can be used to constructed modular curve-like towers. To make the case that this is worth doing it considers these topics. 

\begin{edesc} \label{reason} \item \label{reasona}  That this works significantly with essentially any group $G$ and prime $\ell||G|$, replacing a dihedral group $D_\ell$, $\ell$ odd, and generating conjugacy classes $\bfC$ satisfying \eqref{congcond}.  
\item \label{reasonb}  That the resulting tower of spaces can be expected to have significant properties comparable to modular curve towers, useful for investigating typical problems considered in arithmetic geometry. 
\item \label{reasonc}  That following this path recasts the entire role of the \RIGP, and possibily eventually the \OIT, on substantive moduli spaces, that have arisen in the works of others.     
\end{edesc} 

Without \eqref{congcond}, there is no unique assignment of lifts of classes in $\bfC$ to the characteristic $\ell$-Frattini cover groups. That stems from this. If $g\in G$ has order divisible by $\ell$, then the order of any lift $\tilde g\in \tfG \ell 1$ is $\ell\cdot \ord(g)$.

Given \eqref{congcond}, we may canonicially form towers of Nielsen classes, and their associated Hurwitz spaces, from the sequences of groups in \eqref{charlquots} and their abelianizations: 
\begin{equation} \label{MTHur}  \text{$\{\sH(\tfG \ell k {},\bfC)^\inn\}_{k=0}^\infty$ and the abelianized version $\{\sH(\tfG \ell k {}\ab,\bfC)^\inn\}_{k=0}^\infty$.}\end{equation}  Originally we called these the \MT s. Now we prefer that a \MT\ is a projective sequence of irreducible components (from braid orbits on the Nielsen classes) of their respective levels.  

To address \eql{reason}{reasona}, \cite[Part II]{Fr95} describes the characteristic modules for $G=A_5$ and each prime $\ell=2,3,5$ dividing $|A_5|=60$. Thereby, for these cases, it describes the tower of Nielsen classes attached to the abelianized series for \eqref{charlquots} obtained from the same series using the $\ell$-Frattini cover 
$${}_\ell\psi_\ab : {}_\ell\tilde G_\ab\eqdef {}_\ell\tilde G/[{}_\ell\psi,{}_\ell\psi]\to G,$$ with similar notation for the series $\tfG \ell k {}_\ab$, $k\ge 0$.

We then required three immediate assurances. 
\begin{edesc} \label{mtneed} \item   \label{mtneeda} That we could decide when we are speaking of a non-empty \MT. 
\item  \label{mtneedb} That $K$ points on the $k$th tower level correspond to $K$ regular realizations in the Nielsen class  $\ni((\tfG \ell k {},\bfC)$ (or  $\ni((\tfG \ell k {}_\ab,\bfC)$).  
\item \label{mtneedc} That we know the definition field of $\sH(\tfG \ell k {},\bfC)^\inn \to U_r$ and the rest of the structure around $\sH(\tfG \ell k {},\bfC)^\inn$ as a moduli space. \end{edesc} 

\begin{proof}[Comments]  Response to \eql{mtneed}{mtneeda}:  The first necessary condition is that $G$ is $\ell$-perfect. Otherwise, no elements of $\bfC$ will generate a $\bZ/\ell$ image. 

A much tougher consideration, though, was what might prevent finding elements $\bg\in G^r\cap \bfC$ satisfying product-one (as in \S\ref{analgeom}). \sdisplay{\cite{Fr02b}}  and  \sdisplay{\cite{We05}}  discuss the final resolution of that using the {\sl lift invariant}. 

Response to \eql{mtneed}{mtneedb}:  Originally I formed \MT s to show that talking about rational points on them, vastly generalized talking about rational points on modular curve towers. Especially, that the \RIGP\ was a much tougher/significant problem than usually accepted. 

To assure $K$ points on the $k$th level correspond to regular realizations of the Frattini cover groups, we needed the fine moduli condition that each of the $\tfG \ell k\,$s has no center. The most concise is this \cite[Prop.~3.21]{BFr02}: 
\begin{equation} \label{modcond} \text{If $G$ is centerless, and $\ell$-perfect, then so is each of the $\tfG \ell k\,$s.} \end{equation} 

Response to \eql{mtneed}{mtneedc}: The \BCL\ of \cite[\S5.1]{Fr77} perfectly describes the definition fields of the Hurwitz spaces. The result is more complicated for absolute classes, but both results also apply to reduced classes.  We have given detailed modern treatments of this now in several places including \cite[App.~B.2]{Fr12}; also see \cite[Main Thm.]{FrV91}~and \cite{Vo96}. 

Denote the least common multiple of $\{\ord(g)\mid g\in \bfC\}$ by $N_{\bfC}$, and the field generated over $K$ by a primitive $N_{\bfC}$ root of 1 by $\Cyc_{K,\bfC}$ . Recall: $$G(\Cyc_{\bQ,\bfC}/\bQ)=(\bZ/N_{\bfC})^*, \text{ invertible integers } \!\!\!\mod N_{\bfC}.$$ Then,  $G(\Cyc_{K,\bfC}/K)$ is the subgroup fixed on $K\cap \Cyc_{\bQ,\bfC}$. We define two cyclotomic fields. 
\begin{equation} \label{bcl} \begin{array}{rl} &\bQ_{G,\bfC} \!\eqdef\{m\in (\bZ/N_{\bfC})^* \!\!\mid \{g^m\mid g \in \bfC\} \eqdef\, \bfC^m\, =\,\bfC\}. \\ 
&\bQ_{G,\bfC,T} \!\eqdef \{m\in (\bZ/N_{\bfC})^* \!\!\mid \exists h\in N_{S_n}(G,\bfC) \text{ with } h \bfC^m h^{-1} =\bfC\}. \end{array} \end{equation} 

\begin{lem}[Branch Cycle] \label{branchCycle} As above, then $\bQ_{G,\bfC}$ (resp.~$\bQ_{G,\bfC,T}$) is contained in any definition field of any cover in the Nielsen class $\ni(G,\bfC)^\inn$ (resp.~$\ni(G,\bfC,T)^\abs\eqdef \ni(G,\bfC)^\abs$ if $T$ is understood) \cite[p.~62--64]{Fr77}.\end{lem}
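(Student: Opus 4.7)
The plan is to track how the absolute Galois group $G_\bQ$ acts on Nielsen tuples of covers, and to extract from that action the required containment of cyclotomic fields. Fix a cover $\phi: X \to \prP^1_z$ in the Nielsen class, and let $\bp \in \sH(G,\bfC)^\inn$ be its moduli point. For $\sigma \in G_\bQ$, conjugating equations gives a cover $\phi^\sigma: X^\sigma \to \prP^1_z$ with branch locus $\sigma(\bz)$. The goal is to show: if $\sigma$ fixes the inner moduli point (equivalently, if $\sigma \in G_K$ for a definition field $K$ of $\phi$ as an inner cover), then the cyclotomic character $\chi(\sigma) = m \in (\bZ/N_\bfC)^*$ satisfies $\bfC^m = \bfC$ as unordered tuples. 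This is exactly the defining condition of $\bQ_{G,\bfC}$, and hence forces $K \supseteq \bQ_{G,\bfC}$.

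The key step is the local branch-cycle transformation under $\sigma$. Around each branch point $z_i$, the cover is tamely ramified and its local monodromy generator is read off from a choice of $e_i$-th root $z^{1/e_i}$ of a uniformizer, exactly as in formula \eqref{powseries}. Under $\sigma$, the canonical generator $\hat g_{z_i}$ of the local inertia is transported to the analogous generator $\hat g_{\sigma(z_i)}$ for $\phi^\sigma$, but the identification uses the cyclotomic character: if $\sigma$ sends $\zeta_{N_\bfC} \mapsto \zeta_{N_\bfC}^m$, then the branch cycle at $\sigma(z_i)$ of $\phi^\sigma$ lies in the conjugacy class $\C_i^m$ rather than $\C_i$. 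Thus $\phi^\sigma$ lies in the Nielsen class $\ni(G,\bfC^m)^\inn$, and $\sigma$ induces an isomorphism of Hurwitz spaces $\sH(G,\bfC)^\inn \xrightarrow{\sim} \sH(G,\bfC^m)^\inn$ covering the action of $\sigma$ on $U_r$.

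Now suppose $\phi$ (as an inner cover) is defined over $K$ and take $\sigma \in G_K$. Then $\phi^\sigma \cong \phi$ as inner covers, so in particular the multiset of conjugacy classes of branch cycles must coincide: $\bfC^m = \bfC$. Since this holds for every $\sigma \in G_K$, the restriction $G_K|_{\Cyc_{\bQ,\bfC}}$ is contained in the subgroup defined in \eqref{bcl}, and dualizing gives $K \cap \Cyc_{\bQ,\bfC} \supseteq \bQ_{G,\bfC}$, i.e.\ $K \supseteq \bQ_{G,\bfC}$. For the absolute version, the only difference is that $\phi^\sigma \cong \phi$ as absolute covers allows relabeling the fibre over the base point by an element of $N_{S_n}(G,\bfC)$; the same argument then gives $h\bfC^m h^{-1} = \bfC$ for some $h \in N_{S_n}(G,\bfC)$, which is exactly the condition defining $\bQ_{G,\bfC,T}$.

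The main obstacle is justifying the local transformation law $\C_i \mapsto \C_i^m$ rigorously. One must pass from the transcendental description of branch cycles (via classical generators of the topological $\pi_1$) to an algebraic one that $G_\bQ$ can act on -- typically by working with the tame algebraic fundamental group of $\prP^1_z \setminus \bz$ over $\bar\bQ$, whose inertia at each $z_i$ is canonically $\hat\bZ(1)$ up to choice of compatible roots of uniformizers. The cyclotomic twist then emerges because $G_\bQ$ acts on this Tate-twisted inertia through $\chi$. Once this is in place, the rest of the argument is essentially a bookkeeping comparison of Nielsen tuples under the two equivalences (inner vs.\ absolute).
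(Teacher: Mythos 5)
The paper does not actually prove this lemma: it is stated with a citation to \cite[p.~62--64]{Fr77} inside a ``Comments'' block, so there is no internal argument to compare against. That said, your sketch is the standard proof, and it is essentially the one carried out in \cite{Fr77}: trace the Galois action on the algebraic (tame) fundamental group, observe that tame inertia at each branch point is canonically a Tate twist on which $G_\bQ$ acts through the cyclotomic character, so that $\sigma$ with $\chi(\sigma)=m$ carries the branch cycle at $z_i$ into the class $\C_i^{m^{\pm1}}$ at $\sigma(z_i)$, and then read off $\bfC^m = \bfC$ (resp.\ $h\bfC^m h^{-1}=\bfC$) from the fact that $\phi^\sigma$ coincides with $\phi$ as an inner (resp.\ absolute) cover over $K$, finally dualizing under the Galois correspondence for $\Cyc_{\bQ,\bfC}/\bQ$.

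Two small points are worth flagging. First, your parenthetical ``equivalently, if $\sigma\in G_K$'' conflates fixing the inner moduli point with $\phi^\sigma\cong\phi$ as inner covers; these coincide only under the fine-moduli hypothesis ($G$ centerless for inner equivalence). Since the lemma concerns definition fields of \emph{covers}, not of moduli points, the argument should be run directly on the cover, as you in fact do in the body of the proof; the parenthetical should simply be dropped. Second, whether the exponent is $m$ or $m^{-1}$ depends on sign conventions for the inertia isomorphism, but since $\{m:\bfC^m=\bfC\}$ is a subgroup of $(\bZ/N_\bfC)^*$, this does not affect the conclusion. Beyond that, you correctly identify the genuine technical content --- passing from the transcendental description of branch cycles to the algebraic one via the comparison of topological and \'etale $\pi_1$ and the Tate-twisted structure of tame inertia --- which is exactly where the work lies in \cite{Fr77}. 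The sketch is sound.
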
 

Still, for $K$ points to exist, there must be a $K$-component (as a moduli space). That is a much harder problem, for there are good reasons there can be more than one component (as in the discussion of \sdisplay{\cite{Se90b}} on \cite[Thm.~B]{Fr10})  in any particular case. The \OIT\ contends with that at all levels, as the discussion of $\star$ \cite{FrH15} $\star$ shows. 

In lieu of the Main \MT\ conjecture \ref{MTconj}, for $K$ a definition field of the $k=0$ level of a given \MT, $\{\sH_k\}_{k=0}^\infty$, there are really two different types of cases.   Towers where there is no K for which all levels have definition K, and towers for which the definition fields of the levels have no bounded degree.  Again, we contend with both types in considering the \OIT. \end{proof} 

\begin{guess}[Main \MT\ Conjecture] \label{MTconj} For $K$ a number field, at high levels there will be no $K$ points on a \MT. Also, high levels will be algebraic varieties of general type (high powers of the canonical bundle embed the variety in projective space) \cite{Fr95}.  \end{guess} 

To address \eql{reason}{reasonb} \sdisplay{\cite{BFr02}} went after the Main Conjecture \eqref{MTconj} by inspecting the properties of the $(A_5,\bfC_{3^4},\ell=2)$ case in sufficient detail that any fair observer could see there was something substantive happening in essentially any \MT. 

\cite[Thm.~3.21]{Fr95} got the most attention, for which  we need the following. 
\begin{defn} \label{HMrep} An element $\bg\in \ni(G,\bfC)$ is a {\sl Harbater-Mumford\/} (\HM) representative if it has the form $(g_1,g_1^{-1},\dots,g_s,g_s^{-1})$ (so $2s=r$). A braid orbit $O$ is said to be \HM, if the orbit contains an \HM\ rep. \end{defn} 

The result showed that if $\bfC$ is a rational union, then $G_\bQ$ permutes the \HM\ components. Further, it gave an explicit criterion for showing  there was just one $\HM$ component, that applied to any group $G$. Thereby, it found for $G$, reasonably small values of $r$ for which there was a \MT\ attached to $(G,\bfC)$, for some $\bfC$, with $\bQ$ as a definition bound on all the \MT\ levels. Below we see many picked up on this for compactifying Hurwitz spaces. 

\Sdisplay{\cite{FrK97}}  Suppose $G$ is a group with many known regular realizations. For example:  $A_n$  semidirect product some finite abelian group (like a quotient of $\bZ^{n-1}$ on which $A_n$ acts through its standard representation; see http://www.math.uci.edu/~mfried/deflist-cov/RIGP.html, \S IV.1). Consider, for some prime $\ell$ for which $G$ is $\ell$-perfect, if there are regular realizations of the whole series of $\tfG \ell k$, $k\ge 0$, over some number field $K$.  The basic question: Could all such realizations have a uniform bound, say $r^*$, on the number of branch points -- with no hypothesis on the classes $\bfC$. 

\begin{thm} Such regular realizations are only possible by restricting to $\ell'$ classes (elements of $\bfC$ have orders prime to $\ell$). If they do occur, there must exist a \MT\ over $K$ for some one choice of $r\le r^*$ classes, $\bfC$, with a $K$ point at every level \cite[Thm.~4.4]{FrK97}. \end{thm}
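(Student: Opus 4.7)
The plan is to extract, from the hypothesised infinite family of $K$-realizations, a single Modular Tower built from an $\ell'$ Nielsen class whose every level carries a $K$-rational point. A regular $K$-realization of $\tfG \ell k$ corresponds to a $K$-rational point on some reduced inner Hurwitz space $\sH(\tfG \ell k,\bfC_k)^{\inn,\rd}$ with $r_k=|\bfC_k|\le r^*$. By the Branch Cycle Lemma (Lem.~\ref{branchCycle}) this forces $\Cyc_{\bQ,\bfC_k}\subseteq K$ for every $k$. Since $K$ is a fixed number field, it contains only finitely many roots of unity, so $N_{\bfC_k}$ is bounded uniformly in $k$.

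The uniform bounds on $N_{\bfC_k}$ and on $r_k$ fuel a two-stage pigeonhole refinement. First pick a value $r\le r^*$ occurring infinitely often among the $r_k$. Next, using finiteness of multisets of bounded-order conjugacy classes of $G$, pass to a cofinal subsequence of levels on which the projections of $\bfC_k$ to $G$ stabilize to one multiset $\bfC$ of $r$ classes. If some $C\in\bfC$ had order divisible by $\ell$, then the Frattini-lift observation recorded just after \eqref{congcond} (any lift of a class of order divisible by $\ell$ acquires a further factor of $\ell$ in its order) would force $\ord$ of the level-$k$ lifts to grow like $\ell^k$, contradicting the uniform bound on $N_{\bfC_k}$. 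Hence $\bfC$ is an $\ell'$ family, proving the first assertion.

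Now $\bfC$ meets the hypothesis of \eqref{congcond}, so Schur--Zassenhaus canonically lifts $\bfC$, with unchanged orders, to each $\tfG \ell k$, and the Hurwitz spaces $\sH(\tfG \ell k,\bfC)^{\inn,\rd}$ assemble into the projective tower of \S\ref{modtowdef}. After the refinement, at each $k$ in the cofinal set the supplied realization provides a $K$-point $\bp_k$ on this tower. A Modular Tower is a projective sequence of irreducible components of the successive levels (Def.~\ref{MTdef}); what remains is to extract such a coherent sequence from the $\bp_k$, whose individual choices at different levels need not be projectively compatible.

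The main obstacle is exactly this compatibility, and it is met by a K\"onig-style argument on the inverse tree of Hurwitz-space components. Each level has finitely many geometrically irreducible components (one per $H_r$-orbit on the corresponding Nielsen class), and the projection from level $k+1$ to level $k$ is finite, so each level-$k$ component has only finitely many level-$(k+1)$ preimages. Call a component \emph{$K$-pointed} if some $\bp_m$ with $m\ge k$ projects into it; this property is downward-closed under the tower projection and, by the pigeonhole above, realized at every level. K\"onig's lemma applied to this finite-branching infinite subtree produces an infinite descending chain of $K$-pointed components, which is precisely a $\MT$ with $\sH_k(K)\ne\emptyset$ for all $k$, as required.
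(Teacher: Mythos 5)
The paper itself does not give a proof of this statement; it is an encapsulated citation to \cite[Thm.~4.4]{FrK97}, so there is no ``paper's own proof'' to compare against. Your reconstruction is nevertheless worth assessing, and it contains a genuine gap in the first (and hardest) assertion, although the remaining steps are sound.

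The gap: you write that the Branch Cycle Lemma ``forces $\Cyc_{\bQ,\bfC_k}\subseteq K$.'' That is strictly stronger than what Lem.~\ref{branchCycle} gives. The BCL puts $\bQ_{G,\bfC_k}\subseteq K$, where $\bQ_{G,\bfC_k}$ is the fixed field of the stabilizer of the multiset $\bfC_k$ under the powering action of $(\bZ/N_{\bfC_k})^*$ — not the whole cyclotomic field $\Cyc_{\bQ,\bfC_k}$. If $\bfC_k$ happens to be a \emph{rational union} (the hypothesis of Thm.~\ref{bclthm}), then $\bQ_{G,\bfC_k}=\bQ$ no matter how large $N_{\bfC_k}$ is, so your deduction that $N_{\bfC_k}$ is uniformly bounded collapses. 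Consequently your subsequent contradiction (``lifts of an $\ell$-full class force $\ord$ to grow like $\ell^k$, contradicting bounded $N_{\bfC_k}$'') has no target to contradict. The correct form of the argument has to show that bounded $r_k$, bounded $[\bQ_{G,\bfC_k}:\bQ]$, and the presence of a class of $\ell$-power order $\ell^{m_k}$ with $m_k\to\infty$ are jointly impossible. This needs a group-theoretic input about the characteristic quotients $\tfG \ell k$: the stabilizer of such a class in $(\bZ/\ell^{m_k})^*$ has unbounded index as $k\to\infty$, so the powering-orbit of that one class cannot be crammed into a set of $\le r^*$ classes with a stabilizer of bounded index. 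That orbit-growth claim is exactly the content \cite[Thm.~4.4]{FrK97} supplies; it does not follow from the BCL by itself, and your draft silently assumes it.

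The rest of your argument is in order. Once you know $\bfC$ is $\ell'$, invoking \eqref{congcond} to lift canonically and to set up the Nielsen-class tower is exactly the construction of \S\ref{modtowdef}. Your use of K\"onig's lemma on the finite-branching tree of ``$K$-pointed'' Hurwitz-space components to extract a compatible projective system is the standard compactness step one expects here (it matches the use of Tychonoff in the paper's discussion of \sdisplay{\cite{CaTa09}}), and it correctly isolates the point that the initially given $K$-points at different levels need not cohere. To repair the proof you should replace the ``$\Cyc_{\bQ,\bfC_k}\subseteq K$'' step by the orbit-growth argument about classes of unbounded $\ell$-power order inside the characteristic quotients of ${}_\ell\tilde G$.
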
 

This Fontaine-Mazur analog \cite{Fr06b}, generalizes for each such $G$ the {\sl Involution Realization Conjecture\/} for dihedral groups (as in 
\sdisplay{ \cite{DFr94}}). Of course, the conclusion in the result is contrary to the Main \MT\ Conjecture \ref{MTconj}, which has been proven for the case $r^*\le 4$. 
\vskip.2in

\Sdisplay{\cite{BFr02}} This is a book of tools that has informed all later papers on \MT s. The thread through the book is checking phenomena on \MT s lying over one (connected) reduced Hurwitz space: For the Nielsen class $\ni(A_5,\bfC_{3^4})$; four repetitions of the conjugacy class of 3-cycles) and the prime $\ell=2$. It computes everything of possible comparison with modular curves about level one (and level 0). 

It shows the Main \MT\  Conjecture \ref{MTconj} for it: No $K$ points at high levels (K any number field). The inner space at level 0 has one component of genus 0. Level one has two components, of genus 12 and genus 9. This concludes with a conceptual accounting of all cusps, and all real points on any \MT\ over the level 0 space (none over the genus 9 component). A version of the spin cover (extending the domain of use of \cite{Se90a}) obstructs anything beyond level 1 for the genus 9 component. 

Much is made of this argument: Any prime $\ell$ of good reduction, for which there are $\bZ/\ell$ points at each level of a \MT, would automatically give the trivial power of the cyclotomic character acting on a Tate module, as disallowed in \cite{Se68}.

\cite[\S2.10.2]{BFr02} introduces a very handy device for detecting braid orbits, and organizing the nature of cusps. We call it the {\sl shift-incidence matrix\/}.  

Recall the braid generators in \eqref{Hrgens}. Choose any one of the twists $q_v$  (for $r$ = 4 it suffices to choose $q_2$ on reduced Nielsen classes) and call it $\gamma_\infty$. The rows and columns of the matrix are referenced by the orbits of $\gamma_\infty$ on reduced Nielsen classes as $\row O t$. Reduced Nielsen classes are special in the case $r=4$, as in  Thm.~\ref{genuscomp}.   

The $(i,j)$ entry of the matrix is then $|(O_i)\sh\cap O_j|$ indicating that you apply $\sh$ to all entries of $O_i$, intersect it with $O_j$, and put the cardinality of the result in the $(i,j)$-entry. 

Since $\sh^2=1$ on reduced classes when $r=4$, for that case the matrix is symmetric. Braid orbits correspond to matrix blocks. \cite[Table 2]{BFr02} displays the one block and the genus calculation for $(A_5 , \bfC_{3^4})$. Then, \cite[\S8.5, esp.~Table 4]{BFr02} does a similar calculation for the level 1 \MT s, $({}_2^1 A_5, \bfC_{3^4})$.  

In this example we see the refined analysis that allows us to understand \MT\ levels through their cusps. 

\begin{edesc} \item There are two kinds of cusps, \HM\ and near-\HM, an examplar of the cusp types that have occurred in all examples, with near-\HM\ having a special action under the complex conjugation operator. \item With $r=4$,  from the genus of the components at level 1 being higher than 1, at high levels there can by no $K$ points. \end{edesc} 

\cite[Ex. A.3]{Fr10} shows  $\sH(A_4,\bfC_{+3^2-3^2})^{\inn,\rd}$ has two components (both genus 0) using the shift incidence matrix (computed in \cite[Prop.~3.5]{Fr10}). We have reported on that in Ex.~\ref{exA43-2}, where you can see directly from the cusps that appear in the $\sh$-incidence matrix that the components have respective degrees 9 and 6 over the $j$-line.

\Sdisplay{\cite{Fr02b}   and   \cite{We05}}   
\cite[Chap.~9]{Se92} added material from \cite{Me90} on regularly realizing the $\psi_{A_n,\Spin_n}: \Spin_n\to A_n$ cover. Stated in my language he was looking at the Nielsen class extension $$\Phi_{A_n,\Spin_n}: \ni(\Spin_n, \bfC_{3^{n-1}})^\inn \to \ni(A_n, \bfC_{3^{n-1}})^\inn.$$

\begin{thm} \label{Anlift} \cite[Main Thm.]{Fr10}  For all $n$, there is one braid orbit for $\ni(A_n, \bfC_{3^{n-1}})^\inn$. For $n$ odd, $\Phi_{A_n,\Spin_n} $ is one-one, and the abelianized \MT\ is nonempty. For $n$ even, $\ni(\Spin_n, \bfC_{3^{n-1}})^\inn$ is empty. \end{thm}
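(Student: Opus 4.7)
My plan is to handle the statement in three movements. The common input, from the discussion of \sdisplay{\cite{Se90a}} together with Schur-Zassenhaus uniqueness of order-3 lifts in $\Spin_n\to A_n$, is the lift-invariant formula
$$s_{\Spin_n/A_n}(\bg) \;=\; \prod_{i=1}^{n\nm1}\tilde g_i \;=\; (-1)^{n\nm1} \;\in\; \ker(\Spin_n\to A_n)=\{\pm 1\},$$
which is constant on each braid orbit. For $n$ even, $(-1)^{n\nm1}=-1\ne 1$, so no $\bg\in\ni(A_n,\bfC_{3^{n\nm1}})$ can lift to a product-one tuple in $\Spin_n$; hence $\ni(\Spin_n,\bfC_{3^{n\nm1}})^\inn=\emptyset$. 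For $n$ odd, $(-1)^{n\nm1}=+1$, so every $\bg$ lifts, and uniqueness of the order-3 lift forces $\Phi_{A_n,\Spin_n}$ to be a bijection onto $\ni(A_n,\bfC_{3^{n\nm1}})^\inn$. Both reductions are immediate once the formula is in hand.

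The substantive claim is the single braid orbit on $\ni(A_n,\bfC_{3^{n\nm1}})^\inn$. I would argue by induction on $n$. For $n=4$ I hand-enumerate the product-one triples of 3-cycles generating $A_4$ and verify by explicit $q_1,q_2$-moves that they form a single $H_3$-orbit on $\ni(A_4,\bfC_{3^3})^\inn$. For the inductive step, given $\bg=(g_1,\dots,g_{n\nm1})$, I use the shift $\sh$ and the $q_i$ to transport $\bg$ into a normal form in which the last two entries $g_{n\nm2},g_{n\nm1}$ have supports overlapping in exactly two letters, so that $g_{n\nm2}g_{n\nm1}$ is again a 3-cycle. The contracted tuple $\bg'=(g_1,\dots,g_{n\nm3},g_{n\nm2}g_{n\nm1})$ still satisfies product-one and generates a copy of $A_{n\nm1}$ stabilizing a single letter $a$; restricted to the complement of $a$ it lies in $\ni(A_{n\nm1},\bfC_{3^{n\nm2}})$. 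The inductive hypothesis fuses all such $\bg'$, and a further braid argument, shifting the distinguished pair around the tuple and varying the stabilized letter, fuses the remaining ambiguity.

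For nonemptiness of the abelianized \MT\ when $n$ is odd, level 0 is nonempty by the single orbit just established and its lift invariant vanishes. Hence it lifts to $\Spin_n$, which appears as a quotient of the level-1 group $\tfG 2 1 {}_\ab$ in the abelianized Frattini tower. From there I would argue inductively that the obstruction to lifting any $\bg\in\ni(\tfG 2 k {}_\ab,\bfC_{3^{n\nm1}})$ to $\ni(\tfG 2 {k\np1} {}_\ab,\bfC_{3^{n\nm1}})$ is a class in a characteristic $H^2$ with coefficients in the module $\ker_k/\ker_{k\np1}$; for the abelianized tower this obstruction is forced to vanish by the Frattini structure once the spin-type obstruction has been cleared at level 1, since the braid-orbit conclusion of part (a) prevents any splitting that would house a nontrivial obstruction. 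Consequently every level is nonempty.

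The main obstacle is the transitivity of the braid action; the spin-invariant and \MT-nonemptiness parts follow formally from it together with the lift-invariant formula and the cohomological machinery of Frattini covers. The delicate combinatorial point in the induction is the normalization of $(g_{n\nm2},g_{n\nm1})$ into a pair with two-letter overlap: when their supports are disjoint or overlap in only one letter, repeated $q_i$-moves must be chained (using product-one together with transitivity of $\langle\bg\rangle=A_n$) to bring them into a shared-support configuration without destroying generation, and handling this case is where careful case analysis is unavoidable.
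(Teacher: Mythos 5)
The theorem is cited from \cite{Fr10} rather than proved in the paper, so the comparison is against the surrounding discussion and the machinery the paper identifies as carrying the load. Your reduction of the $n$-even emptiness and the $n$-odd injectivity of $\Phi_{A_n,\Spin_n}$ to the lift-invariant formula $s_{\Spin_n/A_n}(\bg)=(-1)^{n\nm1}$, together with Schur--Zassenhaus uniqueness of order-3 lifts, is exactly right and matches how the paper frames it under \sdisplay{\cite{Se90a}}.

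There is a genuine gap in your argument for nonemptiness of the abelianized \MT\ when $n$ is odd. You assert that after the spin obstruction is cleared, the $H^2$ obstruction at each level ``is forced to vanish by the Frattini structure,'' with the braid-orbit transitivity of part (a) somehow preventing a nontrivial obstruction. Neither claim is correct as stated: the characteristic kernels $\ker_k/\ker_{k\np1}$ are nontrivial $G$-modules at every level, the Frattini property by itself gives no vanishing in $H^2$, and transitivity of the braid action on $\ni(A_n,\bfC_{3^{n\nm1}})$ is a statement about level $0$ that says nothing about lifting to levels $k\ge 1$. The mechanism the paper actually invokes is Thm.~\ref{genspinn} (\cite[Cor.~4.19]{Fr06}, resting on \cite{We05}): because the universal $\ell$-Frattini cover $\fG \ell$ is an $\ell$-Poincar\'e duality group of dimension $2$, nonemptiness of the abelianized \MT\ over a braid orbit $O$ reduces to a single lift-invariant check against the maximal $\ell$-central Frattini extension $R_{\ell,G}$ --- for $G=A_n$, $\ell=2$, this is $\Spin_n$. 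That duality is the missing ingredient; without it your inductive obstruction argument has no reason to terminate. You need to either quote Weigel's theorem or reproduce the $\ell$-Poincar\'e duality input.

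The braid-transitivity induction is plausible in outline and is in the spirit of \cite{Fr10}, but the contraction step needs more than the case analysis you flag. When you replace $(g_{n\nm2},g_{n\nm1})$ with overlap-2 supports by the product $g_{n\nm2}g_{n\nm1}$ and drop a letter $a$, you must verify that the contracted tuple still \emph{generates} $A_{n\nm1}$ on the remaining letters, not merely that its entries lie in $A_{n\nm1}$; generation can fail even if all entries of the original tuple touch $a$. Moreover, maneuvering two 3-cycles into a two-letter overlap using only the $q_i$ and $\sh$ is not a local operation, and the ``chaining'' you mention in the disjoint-support and one-overlap cases is precisely where \cite{Fr10} does real work (and where the $r\ge n$ case bifurcates into two braid orbits, so the argument must be sensitive to $r=n\nm1$). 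You should at least explain what structural feature of the genus-$0$ case $r=n\nm1$ blocks the second orbit that appears when $r\ge n$; the lift invariant does this, but then you are using it for more than the $\Spin_n$ part.

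Finally, a small but real point: for the Schur multiplier of $A_n$ to make $\Spin_n$ the maximal $2$-central Frattini extension you should note that the $2$-part of $H_2(A_n,\bZ)$ is $\bZ/2$ for $n\ge 4$ (the exceptional sextuple covers of $A_6,A_7$ are prime to $2$), so $R_{2,A_n}=\Spin_n$; this is what lets the one-time spin check suffice once Thm.~\ref{genspinn} is in play.
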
 

The meaning of the $n$ even case is this. For $\bg \in  \ni(A_n, \bfC_{3^{n-1}})^\inn$, if you lift its entries to same order entries in $\Spin_n$, to get $\hat \bg$, the result does not satisfy product-one: $\hat g_1, \dots, \hat g_{n-1} = -1$: the {\sl lift invariant\/} in this case. 

I used this to test many properties of \MT s. Here it showed that there is a {\sl  nonempty\/} Modular Tower over $\ni(A_n, \bfC_{3^{n-1}})^\inn$ for $\ell=2$ if and only if $n$ is odd.  In particular the characteristic Frattini extensions define the tower levels, but central Frattini extensions control many of their delicate properties. If you change $\bfC_{3^{n-1}}$ to $\bfC_{3^{r}}$, $r\ge n$, there are precisely two components, with one obstructed by the lift invariant, the other not. 

\cite[Thm.~2.8]{Fr02b} gives a procedure to describe the $\ell$-Frattini module for any $\ell$-perfect $G$, and therefore of the sequence $\{\tfG \ell k {}_\ab\}_{k=0}^\infty$. \cite[Thm.~2.8]{Fr02b}  labels Schur multiplier types, especially those called {\sl antecedent}. Example: In \MT s where $G=A_n$, the antecedent to the level 0 spin cover affects \MT\ components and cusps at all levels $\ge  1$ (as in \cite{Fr06}). 

\cite[I.4.5]{Se97a} extends the classical notion of {\sl Poincar\'e duality\/} to any pro-$\ell$ group. Especially it was applied to the pro-$\ell$ completion of the fundamental group of a compact Riemann surface of any given genus. \cite{We05} uses the extended notion, intended for groups that have pro-$\ell$ groups as extensions of finite groups. Main Result: The universal ${\ell}$-Frattini cover $\fG \ell$ (and $\fG \ell {}_\ab$) is an $\ell$-Poincar\'e duality group of dimension 2.  Now compare Thm.~\ref{genspinn} with the $\Spin_n\to A_n$ case. 

\begin{thm} \label{genspinn}  \cite[Cor.~4.19]{Fr06}: Assume the usual \MT\ conditions, $G$ that is $\ell$-perfect, $\ell'$ classes $\bfC$, and let $O$ be a braid orbit on the Nielsen class. Take $R_{\ell,G}\to G$ to  be the maximal $\ell$-central Frattini extension of $G$ (an $\ell$-representation cover). Then, there is a (nonempty) abelianized \MT \ over the Hurwitz space component corresponding to $O$ if and only if the natural (one-one) map $\ni(R_{\ell,G},\bfC) \to  \ni(G,\bfC)$ is onto $O$.  \end{thm}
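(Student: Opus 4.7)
The plan is to recast nonemptiness of the abelianized \MT\ over $O$ as a tower of cohomological lifting obstructions and then to collapse that entire tower onto the single invariant read off from the representation cover $R_{\ell,G}$, using Weigel's Poincar\'e duality for $\fG \ell {}_\ab$. First I would set up the level-$k$ lift invariant: for $\bg\in O$, Schur--Zassenhaus applied to the $\ell$-group kernel of $\tfG \ell k {}_\ab \to G$ (consistent with $\bfC$ being $\ell'$, as in \eqref{congcond}) produces canonical same-order lifts $\tilde g_i$ of the $g_i$, and the element $\prod_i \tilde g_i$ in that kernel defines $s_k(\bg)$. Arguing as in the proof of the braid-invariance of $s_{\Spin_n/A_n}$ underlying Thm.~\ref{Anlift}, $s_k$ descends to a well-defined, braid-invariant function on $O$. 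The abelianized \MT\ is nonempty over $O$ precisely when each $s_k$ vanishes on $O$ for a compatible choice of representatives.

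The forward direction is essentially formal. Given a nonempty \MT, pick any level-$1$ lift $\tilde \bg \in \ni(\tfG \ell 1 {}_\ab,\bfC)$ over some $\bg\in O$. Because $R_{\ell,G}\to G$ is an $\ell$-central Frattini cover, universality of $\tfG \ell 1 {}_\ab$ (Def.~\ref{univFrattdef-ab}) yields a canonical quotient $\tfG \ell 1 {}_\ab \twoheadrightarrow R_{\ell,G}$, through which $\tilde \bg$ pushes forward to an element of $\ni(R_{\ell,G},\bfC)$. Combined with braid transitivity on $O$ and braid invariance of the lift invariant, this gives surjectivity of $\ni(R_{\ell,G},\bfC)\to O$.

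For the converse, assume the surjection onto $O$. At level $1$, $s_1$ takes values in the characteristic $\bZ/\ell[G]$-module ${}_\ell M_G$ from \eqref{charZlGmod}. The $G$-coinvariant summand of ${}_\ell M_G$ corresponds exactly to the kernel of $R_{\ell,G}\to G$ (the standard identification of the maximal central $\ell$-Frattini quotient with the coinvariant piece), so our hypothesis kills that summand of $s_1$. The complementary non-central part of ${}_\ell M_G$, being induced from proper subgroups, carries no product-one obstruction by an Eckmann--Shapiro / minimality argument encoded in the Frattini property (an obstruction there would produce a proper sub-Frattini cover of $G$). Hence $s_1 = 0$ on $O$. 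For $k\ge 2$, Weigel's theorem that $\fG \ell {}_\ab$ is a $2$-dimensional $\ell$-Poincar\'e duality group (see \cite{We05}) identifies each successive kernel piece ${}_\ell M_{k,k+1}$ with a twist of a subquotient of ${}_\ell M_G$, and the level-$k$ obstruction $s_k$ is routed by this duality to an image of $s_1$. Thus $s_1=0$ forces $s_k = 0$ for every $k$.

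The main obstacle is this Poincar\'e-duality propagation in the converse: verifying rigorously that the duality isomorphism of \cite{We05} carries each $s_k$ to an image of $s_1$, and that this identification is compatible with the $H_r$-action on Nielsen classes so that braid invariance passes between levels. It is precisely this input that was unavailable to \cite{Se90a}, which is why Thm.~\ref{Anlift} could only diagnose the level-$0$ spin obstruction in the $\Spin_n\to A_n$ example. Once Weigel-duality is in place the entire tower of obstructions collapses to the single class controlled by $R_{\ell,G}$, and Thm.~\ref{genspinn} follows.
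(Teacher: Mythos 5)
The paper does not actually prove this theorem: it states it as a citation to \cite[Cor.~4.19]{Fr06}, so there is no ``paper proof'' to compare against line by line. Judged on its own terms, your proposal has the right high-level intuition---that the obstruction to nonemptiness is concentrated in the Schur-multiplier/representation-cover data, and that Weigel's $\ell$-Poincar\'e duality is what collapses the potentially infinite tower of obstructions down to one class---but several of the steps do not hold as written.

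In the forward direction, you push a level-$1$ element of $\ni(\tfG \ell 1 {}_\ab,\bfC)$ down to $R_{\ell,G}$ by asserting a canonical surjection $\tfG \ell 1 {}_\ab \twoheadrightarrow R_{\ell,G}$. That surjection need not exist: the kernel of $R_{\ell,G}\to G$ is the $\ell$-part of the Schur multiplier, which can have exponent $\ell^m$ with $m>1$, whereas $\ker(\tfG \ell 1 {}_\ab\to G)=M_G$ has exponent $\ell$. The Frattini-universality argument is sound, but it only gives a surjection $\tfG \ell k {}_\ab \twoheadrightarrow R_{\ell,G}$ for $k$ sufficiently large; you must pass to such a level, which the nonemptiness hypothesis permits.

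In the converse, the central difficulty is that your $s_k$ is not the well-defined braid invariant you treat it as. The product $\prod_i \tilde g_i$ is a genuine invariant when the kernel is central---that is the situation in $\Spin_n\to A_n$, which is why the analogy with Thm.~\ref{Anlift} works there---but $\ker(\tfG \ell k {}_\ab\to G)$ is a $\bZ/\ell[G]$-module on which $G$ acts nontrivially, so the product depends on the choice of same-order lift within the conjugacy class. What is actually well-defined is the image in a central quotient; this is precisely why the theorem routes everything through $R_{\ell,G}$, and it is the content you would need to establish, not assume. Related to this, the claim that ``the $G$-coinvariant summand of $M_G$ corresponds exactly to $\ker(R_{\ell,G}\to G)$'' is not right as stated: when $\ell\mid |G|$ the module $M_G$ generally does not split off a trivial summand at all, and the kernel of the $\ell$-representation cover is $H_2(G,\bZ_\ell)$, which is not literally $M_G$-coinvariants. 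The Eckmann--Shapiro step for the ``complementary part'' then has nothing to act on. Finally, you correctly identify the Poincar\'e-duality propagation from $s_1=0$ to $s_k=0$ for all $k$ as the crux, but you state it as a hoped-for lemma rather than proving it; in \cite{Fr06} this is exactly the nontrivial input from \cite{We05} that requires a genuine argument (compatibility of the duality with the braid action and with the characteristic filtration), and without it the converse is not established.
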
 

\subsubsection{Progress on the \MT\ conjectures and the \OIT} \label{05-to-now}  
As with modular curves, the actual \MT\ levels come alive by recognizing moduli properties attached to particular (sequences) of cusps. It often happens with \MT s that level 0 of the tower has no resemblance to modular curves, though a modular curve resemblance arises at higher levels.

Level 0 of alternating group towers illustrate: They have little resemblance to modular curves. Yet, often level 1 starts a subtree of cusps that contains the cusptree of modular curves. We can see this from a (preliminary) classification of cusps discussed in \sdisplay{ \cite[\S3]{Fr06}}. 

Here, near the end of this paper,  my intention is to leave the deeper part of the discussion of generalizing Serre's \OIT\ to \cite{Fr18}. Still, we make one point here, based on our long discussion  in \sdisplay{\cite{Se68}} and \sdisplay{\cite{Fr78} }.  

With $\dagger$ either inner or absolute equivalence,  it is the interplay of two Nielsen classes that gives a clear picture of the bifurcation between the two types of decomposition groups, \CM\ and $\GL_2$. Those Nielsen classes are $$\text{$\ni(D_\ell,\bfC_{2^4})^{\dagger,\rd}$ and $\ni((\bZ/\ell)^2\xs \bZ/2,\bfC_{2^4})^{\dagger,\rd}$.}$$  

In the example(s) of \cite{Fr18}, the same thing happens there, too. Of course, the $j$ values giving the differentiation in types won't be precisely the same as that for Serre's modular curve case. Further, as in \sdisplay{\cite{FrH15}}, there are nontrivial lift invariants, and more complicated, yet tractible, braid orbits. 

\Sdisplay{\cite{D06}, \cite{W98},  \cite{DDes04}  and \cite{DEm05}}  \cite{D06}  has expositions on \cite{DFr94}, \cite{FrK97}, \cite{DDes04} and \cite{DEm05}, in one place. 

See the definition of Harbater-Mumford component in the comments on \eqref{mtneed}. It was by using a compatification of the Hurwitz space that one could see if there was only one Harbater-Mumford component, then $G_\bQ$ would fix that component, thus showing it is defined over $\bQ$. 

A driver of all this is \cite[Thm.~3.21]{Fr95} which constructs \MT s all of whose levels are defined over $\bQ$, each a Harbater-Mumford  component of fine moduli inner Hurwitz spaces. In particular, whose $k$-th level rational points correspond to regular realization of the $k$-th characteristic quotient $\tfG \ell k$ of the universal $\ell$-Frattini cover of $G$.

I  point out from whence comes the name {\sl Harbater-Mumford}. \cite{Mu72} used a completely degenerating curve on the boundary of a space of curves. In a sense, \cite{Ha84}, for covers, consists of a \lq\lq germ\rq\rq\ of such a construction.  

\cite{W98} developed a Deligne-Mumford stable-compactification of Hurwitz spaces that would put a Harbater degeneration on the boundary of the space. This would allow a standard comparison -- contrasting with the group theoretic use of {\sl specialization sequences\/} in Fried's result -- for labeling Harbater-Mumford cusps as lying on Harbater-Mumford components. Both compactifications are compatible with the \MT\ construction (they form natural projective systems). Further, they support -- from Grothendieck's famous theorem -- that, other than primes $p$ dividing $|G|$,  the whole \MT\ system has good reduction $\mod p$. 

\cite{DDes04}   considers that if  the arithmetic Main \MT\ Conjecture \eqref{MainConj} were wrong, then there would be a finite group $G$  satisfying the usual conditions for $\ell$ and $\bfC$ so that for some number field $K$, the corresponding \MT\ would have a $K$ point at every level. Using the \cite{W98} compactifications of the \MT\ levels, for almost all primes $\bp$ of $K$, this would give a projective system of $\sO_{K,\bp}$ (integers of $K$ completed at $\bp$) points on cusps. The results here considered what \MT s (and some generalizations) would support such points for almost all $\bp$ using Harbater patching (from \cite{Ha84}) around the Harbater-Mumford cusps. 

\cite{DEm05}  continues the results of \cite{DDes04}, ties together the notions of Harbater-Mumford components and the points on cusps that correspond to them, connecting several threads in the theory. As an application, they construct, for every projective system $\{G_k\}_{k=0}^\infty$, a tower of corresponding Hurwitz spaces, geometrically irreducible and defined over $\bQ$ (using the criterion of \cite[Thm.~3.21]{Fr95}). These admit  projective systems of points over the Witt vectors with algebraically closed residue field of $\bZ_p$, avoiding only those $p$ dividing some $|G_k|$. 

\cite[Fratt.~Princ.~2]{Fr06} says existence of a g-$\ell'$ cusp defines a regular realization of $\fG \ell$  over any algebraic closure of $\bar \bQ$ in the Nielsen class. Likely this is {\sl if and only if}. The approach to more precise results has been to consider a Harbater patching converse: Identify the type of a g-$\ell'$ cusp that supports a Witt-vector realization of $\fG \ell$. 

\Sdisplay{\cite{Fr06}} Also see previous discussions on this paper, as in \sdisplay{\cite{Fr02b},  \cite{We05}  and  \cite{DEm05}}.   Cusp types and Cusp tree on a Modular Tower: If you compactify the tower levels, you get complete spaces, with cusps lying on their boundary. The \MT\ approach allows identifying these cusps using elementary finite group theory. They are three main types $$\text{ \cite[\S 3.2.1]{Fr06}: $\ell$-cusps, g(roup)-$\ell'$ and o(nly)-$\ell'$.}$$ Modular curve towers have only the first two types, with the g-$\ell'$  cusps on them the special kind called shifts of H(arbater)-M(umford). Let $O$ be a braid orbit on $\ni(G,\bfC)$. 

\begin{thm} There is a full \MT\ over the Hurwitz space component corresponding to $O$ if $O$ contains a g-$\ell'$ representative (no need to check central Frattini extensions as in Thm.~\ref{genspinn}). \end{thm}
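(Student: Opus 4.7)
The plan is to produce directly, from a g-$\ell'$ representative $\bg\in O$, a compatible projective system of Nielsen-class elements $\tilde\bg^{(k)}\in \ni(\tfG \ell k,\bfC)^\dagger$ with $\tilde\bg^{(k)}$ mapping to $\tilde\bg^{(k-1)}$ and to a braid translate of $\bg$ at level $0$. Such a system yields a projective system of nonempty Hurwitz-space components over the component of $O$ by the construction of \S\ref{modtowdef}, which is precisely what a full \MT\ over that component means.

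The principal tool is Schur-Zassenhaus, used exactly as in \eqref{congcond}. Because $\bfC$ is $\ell'$ and $\ker(\tfG \ell k\to G)$ is an $\ell$-group, each $g_i$ has a unique same-order lift $\tilde g_i^{(k)}$ in the canonically lifted conjugacy class. The only potential obstacle is product-one: $\prod_i \tilde g_i^{(k)}$ lies in the $\ell$-kernel and may be nontrivial. The g-$\ell'$ hypothesis is exactly what forces this obstruction to vanish. Interpreting the cusp as the $\gamma_\infty$-orbit through $\bg$ (in the $r=4$ notation $(g_1,g_2,g_3,g_4)$), the g-$\ell'$ condition asserts that the inner cuspidal subgroup $\lrang{g_2,g_3}$ has order prime to $\ell$; by Schur-Zassenhaus this whole subgroup lifts uniquely and isomorphically to $\tfG \ell k$, so the \emph{product} $\tilde g_2^{(k)}\tilde g_3^{(k)}$ is determined and is itself $\ell'$.

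From here I would proceed inductively. Given $\tilde\bg^{(k)}$, pass to $\tfG \ell {k\np1}$, whose characteristic kernel $M=M_{k,k\np1}$ is an abelian $\ell$-module carrying a $G$-action. The cuspidal piece lifts uniquely, so $\tilde g_2^{(k\np1)}\tilde g_3^{(k\np1)}$ is pinned down; the remaining outer coordinates $\tilde g_1^{(k\np1)},\tilde g_4^{(k\np1)}$ may be translated by $M$, and the product-one obstruction is a class in a cohomology group computed from the action of the $\ell'$-group $\lrang{g_2,g_3}$ on $M$. Since a pro-$\ell$ module has no higher cohomology in $\ell'$-degree, the obstruction vanishes and the correcting adjustment exists. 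Generation at every level is automatic because $G$ is $\ell$-perfect and Frattini covers preserve generation (Lem.~\ref{fiberprod}). Assembling these lifts gives the desired projective system.

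The hard part will not be the Schur-Zassenhaus step or the cohomological vanishing, both of which are formal once the cusp is fixed, but rather checking that each $\tilde\bg^{(k)}$ lies over a braid orbit that genuinely projects onto $O$ (rather than dropping to a strict sub-orbit), and that this persists coherently in $k$. This is what allows bypassing the central-Frattini check of Thm.~\ref{genspinn}: since the cuspidal $\ell'$ subgroup lifts through every $\ell$-representation cover $R_{\ell,G}\to G$, the lift invariant measured on $O$ by $R_{\ell,G}$ is automatically trivial at the g-$\ell'$ cusp. I would encapsulate this last structural input as \cite[Fratt.~Princ.~2]{Fr06} and invoke it to conclude.
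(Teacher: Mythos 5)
The paper cites this as a result of \cite{Fr06} and does not reprove it, so I am assessing your sketch on its own merits. The strategy — direct Schur--Zassenhaus lifting of a cusp representative through the characteristic quotients — is sound in spirit, but your reading of the g-$\ell'$ condition is too weak, and this leaves a genuine gap.

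You take ``g-$\ell'$ representative'' to mean only that $\lrang{g_2,g_3}$ has order prime to $\ell$. That cannot be the full condition, and the paper's own Prop.~\ref{A43-2} supplies the test case. In $\ni(A_4,\bfC_{\pm3^2})^{\inn,\rd}$ with $\ell=2$, the orbit $\ni_0^-$ has nontrivial lift invariant (so by Thm.~\ref{genspinn} there is not even an abelianized \MT\ over it). Yet that orbit contains $\sh\,g_{1,4}=((1\,3\,4),(1\,2\,4),(1\,2\,4),(1\,2\,3))$, whose middle pair generates $\lrang{(1\,2\,4)}$ of order $3$ — a $2'$ group. Under your reading, this would be a g-$2'$ representative, and the theorem would then assert a full \MT\ over a component that demonstrably has none. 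The missing ingredient is visible in the same element: its outer pair $\lrang{g_1,g_4}=\lrang{(1\,3\,4),(1\,2\,3)}=A_4$ is \emph{not} $2'$. The cusp group controlling the g-$\ell'$ designation must constrain both sides (compare the modular-curve shift-of-HM cusp $(g_1,g_2,g_2,g_1)$ for $D_\ell$, $\bfC_{2^4}$, where both $\lrang{g_2,g_3}=\lrang{g_2}$ and $\lrang{g_1,g_4}=\lrang{g_1}$ are $\ell'$; and compare the shift of $\bg_{1,1}$ in $\ni_0^+$, where both pairs generate order-$3$ groups).

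This also undercuts your inductive step. You place the product-one obstruction ``in a cohomology group computed from the action of the $\ell'$-group $\lrang{g_2,g_3}$ on $M$''; but the obstruction involves adjusting $g_1$ and $g_4$ as well, and the relevant group is the larger one they generate together with $g_2g_3$. That group is not $\ell'$, so the vanishing you invoke does not apply. In fact once both pairs are $\ell'$ you do not need cohomology at all: lift $\lrang{g_2,g_3}$ and $\lrang{g_1,g_4}$ to $\ell'$ subgroups of $\tfG \ell k$ by Schur--Zassenhaus; then $\tilde g_2\tilde g_3$ and $\tilde g_1^{-1}\tilde g_4^{-1}$ are two same-order $\ell'$ lifts of $g_2g_3=(g_4g_1)^{-1}$, hence $\ker$-conjugate; conjugating the $(\tilde g_2,\tilde g_3)$ pair by that conjugator gives product-one on the nose, and generation then follows from the Frattini property since the lifts surject onto $G$. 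Compatibility across levels comes from the usual projective-limit argument. Your closing appeal to \cite[Fratt.~Princ.~2]{Fr06} is essentially a citation of the statement you are trying to prove, so it cannot serve as the step that closes the argument.

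Two smaller points: Schur--Zassenhaus gives a unique \emph{conjugacy class} of same-order lifts, not a unique lift; and preserving $\ell'$-order under adjustment requires conjugation by the kernel, not translation.
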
 

For $r=4$, there is this type of modular curve result. If $O$ contains an  \HM\  cusp that is also an $\ell$-cusp, the Main Conjecture \eql{MainConj}{MainConja} holds explicitly for any \MT\ over $O$ in this sense. The genus of the tower levels grows rapidly. 
Generalizing the precise \MT\ criterion given in \cite[Princ.~4.23]{We05} gives a lift invariant criterion for $\ell$-cusps to lie over o-$\ell'$  cusps.

\Sdisplay{\cite{CaTa09}} We discuss the two approaches to the Main Conj.~\ref{mainconj} that give its truth for $r=4$, Thm.~\ref{truer=4}. Both use Falting's Theorem. First, \cite{CaTa09}, which is more general and far less explicit. As those authors admit, this was motivated by the Main Conjecture \ref{MTconj}, which Tamagawa saw at my lectures in the late '90s at RIMS.  

Let $\chi: G_K \to  \bZ_p^*$ be a character, and $A[p^\infty](\chi)$ be the $p$-torsion on an abelian variety $A$ on which the action is through $\chi$-multiplication. Assume $\chi$ does not appear as a subrepresentation on any Tate module of any abelian variety (see \cite{Se68}, \cite{DFr94} and \cite{BFr02}). Then, for $A$ varying in a 1-dimensional family over a curve $S$ defined over $K$, there is a uniform bound on $|A_s[p^\infty](\chi)|$ for $s \in S(K)$. In particular, this gives the Main \MT\ conjecture when $r=4$. 

By contrast, the \MT\ approach has specific objects (tower levels) for which the disappearance of rational points in support of the Main Conjecture, engages this explicit topic: 
If an \RIGP\ realization of $G$ exists, where is it? From \cite{FrV92} (and related), such must correspond to $\bQ$ points on Hurwitz spaces, and we are certainly coming to understand this topic. Despite compatibilities with the Cadoret-Tamagawa appoach, the biggest difference is that \cite[Prop. 5.15]{Fr06} displays a \MT\ level where the genus exceeds 1. The effectiveness of this result uses the classification of the cusps (as in  \sdisplay{\cite{Fr06}}) and the explicit genus computation of Thm.~\ref{genuscomp}. 

\cite[\S5.5]{BFr02} is a model for this as applied, say, to the sequence \eqref{A52}. So, \cite{Fa83} implies this level has, for any $K$, but finitely many rational points. 

If there were points at all higher levels, from the Tychonoff Theorem, some subset of them would be part of a projective system of $K$ points on the \MT. By applying Grothendieck's good reduction result, we could reduce the tower modulo a prime (distinct from $\ell$) and get a projective system of points on it over a finite field. This would contradict Weil's Theorem for the action of the Frobenius on  the Jacobian of the curve corresponding to that projective system. Unlike the growing genus result, this is not explicit for deciding at what level those finitely many $K$ points would disappear.  

\Sdisplay{\cite{CaD08}}  A serious topic has been missing from \RIGP\ vs \IGP\ discussions. A simple statement would to ask why  the \RIGP\ (combined with Hilbert's Irreducibility Theorem) has been so much more successful in realizing groups as Galois groups, and in connecting to other research areas? 

That does, however, leave out that -- in the form of \MT s -- both areas have a central place for nilpotent groups. A more complicated statement would consider what advantage there is to \RIGP\ realizations, and how one would decide among realizations of a given $G$ over $\bQ$, whether there was evidence for an \RIGP\ realization. 

This topic that has been gaining recent attention. It will make an appearance in \cite{Fr18}.   The \cite{CaD08} contribution is this. Suppose a finite group $G$ has a regular realization over $\bQ$. Then the abelianization of its $p$-Sylow subgroups has order ($p^u$) bounded by an expression in their index $m$ in $G$, the branch point number $r$ and the smallest prime $v$ of good reduction of the cover. This is a new constraint for the \RIGP. 

To whit: If $p^u$ is large compared to $r$ and $m$, the  branch points of the cover must coalesce modulo some prime $v$; a $v$-adic measure of proximity to a cusp on the corresponding Hurwitz space. Here is a striking conjecture. 
\begin{guess} Some expression in $r$ and $m$, independent of $v$, bounds $p^u$. This follows from the S(trong) T(orsion) C(onjecture) on abelian varieties, and it gives forms of the Main \MT\ conjecture. \end{guess} 

\Sdisplay{\cite{LO08}  and  \cite{Fr09}}  This gave an especially good place to see the sh-incidence matrix (discussion of \sdisplay{\cite{BFr02}}) in action on a variety of cusps with extra structure inherited from conjugacy classes in $A_n$. 

\cite{LO08} Showed the absolute Hurwitz spaces of pure-cycle (elements in the conjugacy class have only one length $\ge$ 2 disjoint cycle) genus 0 covers have one connected component. There is a conspicuous overlap with the 3-cycle result of \cite[Thm.~1.3]{Fr10}, the case of four 3-cycles in $A_5$.  \cite[\S5]{LO08} gives the impression that all these Hurwitz spaces are similar, without significant distinguishing properties. \cite[\S9]{Fr09}, however, dispels that. 

First it notes that subsets of these Nielsen classes can have differing inner Hurwitz spaces, varying in having one or two components. Then, by detecting seriously diverging behaviors in their level 1 cusps. 

The stronger results come by considering the inner (rather than absolute) Hurwitz spaces. \cite[Prop.~5.15]{Fr09} uses the sh-incidence matrix to display cusps, elliptic fixed points, and  genuses of the inner Hurwitz spaces in two infinite lists of \cite{LO08} examples. In one there are two level 0 components (conjugate over a quadratic extension of $\bQ$). For the other just one. 

Further, applying \cite[\S 3]{Fr06}, the nature of the 2-cusps in the \MT s over them differ greatly. None have 2-cusps at level 0. For the list with level 0 connected, the tree of cusps, starting at level 1, contains a subtree isomorphic to the cusp tree on a modular curve tower: it has a {\sl spire}. For the other list, there are 2-cusps, though not quite like those of modular curves. 

A spire makes plausible a version of Serre's \OIT\ on such MTs. The goal of recognizing $\ell$-cusps is a key to showing high \MT\ levels have general type, and it gives an approach to the Main Conjectures for $r\ge 5$. 

\Sdisplay{\cite{FrH15}} The culminating topic of \cite{Fr18}  is the system of \MT s based on the Nielsen class $\ni_{\ell,3}\eqdef \ni((\bZ/\ell)^2\xs \bZ/3, \bfC_{+3^2-3^2})$.  Notice our choice of conjugacy classes (at first in $\bZ/3$, but extended to $\ni_{\ell,3}$) is a rational union, as given by Thm.~\ref{bclthm}. From this we have Hurwitz spaces (Nielsen classes) with their extra structure defined over $\bQ$ using the \BCL. 

If we follow the general statement for a \MT\ that a prime $\ell$ is considered only if $G$ is $\ell$-perfect, then our condition would be $(\ell,3)=1$, and for each $\ni_{\ell,3}$, only that prime would be involved in the \MT. Since, however, we wish to follow the style of Serre's \OIT, we must make an adjustment. 

\begin{edesc} \label{3vs2}  \item \label{3vs2a} As Serre includes all primes, including $\ell=2$, wouldn't it be best if we included all primes, too, including $\ell=3$?  
\item \label{3vs2b} Then, however, for each $\ell$, why would we leave out the prime 3 in applying to the particular case of $\ni_{\ell,3}$? \end{edesc} 

The trick is this: Since in Serre's case (resp.~our case), the $\bZ/2$ (resp.~$\bZ/3$) is a splitting coming from a semi-direct product $\bZ\xs \bZ/2$ (resp.~$(\bZ/2)^2\xs \bZ/3$, we are free to ignore the copy of $\bZ/2$ (resp.~$\bZ/3$) even if $\ell=2$ (resp.~$3$). For the same reason we ignore that prime if $\ell$ is not 2 (resp.~3). 

Below we quote only the level 0 braid orbit description, but for all $\ell$. There are several different \MT s in our case. This doesn't occur in dihedral group cases, because there is no central $\ell$-Frattini cover of $D_\ell$ (for $\ell$ odd). So, no lift invariant occurs in the Nielsen class that starts Serre's \OIT. There is, though as in Thm.~\ref{Anlift} in the alternating group case, though we didn't set that up with a lattice action as in this case. 

Consider the matrix $$ \label{InvHell} 
M(x,y,z)\eqdef \begin{pmatrix} 1 &x &z \\ 0& 1 &y \\ 0& 0& 1\end{pmatrix} , \text{ with inverse } 
 \begin{pmatrix} 1 &-x &xy\nm z \\ 0& 1 &-y \\ 0& 0& 1\end{pmatrix}. $$
For $R$ a commutative ring, the $3\times 3$  {\sl  Heisenberg group\/} with entries in $R$: 
$$\bH_{R}=\{M(x,y,z)\}_{x,y,z\in
R}.$$ 
\cite[\S4.2.1]{FrH15}  shows the action of $\bZ/3$ extends to the {\sl small Heisenberg group\/} providing this Nielsen class with a non-trivial lift invariant. Even at level 0, that separates the braid orbits into those with 0 lift invariant, and those with lift invariant  in $(\bZ/\ell)^*$.  The lift invariant values grow going up the tower, because the Heisenberg group kernel grows. 

\cite[Prop.~4.18]{FrH15} gives a formula for the lift invariant in this case when $r=4$, the first such formula going beyond the Nielsen classes for $A_n$ and $\bfC$ odd order classes (as in the discussion \sdisplay{\cite{Se90a}}). 

Let  $K_\ell= {{\ell\pm1}\over 6}$, for $\ell>3$ prime.  We did the case $\ell=2$ in Prop.~\ref{A43-2}, but the numerics are different. Here is part of  \cite[Thm.~5.2]{FrH15} on the level 0 components of the Hurwitz space. 
{\sl Double identity Nielsen class representatives\/}  have the form $$(g,g,g_3,g_4), \text{ quite different than the \HM\ reps (Def.~\ref{HMrep}).}$$ 

\begin{thm}[Level 0 Main Result] \label{level0MT} For $ \ell> 3$ prime and $k = 0$  there are $K_\ell$  braid orbits with trivial (0) lift invariant. All are \HM\ braid orbits. Orbits with nontrivial lift invariant are those containing {\sl double identity\/}  cusps. These are each distinguished by the value of that lift invariant. \end{thm}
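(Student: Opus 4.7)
The plan is to parametrize the Nielsen class using the explicit semidirect-product structure, apply the lift invariant formula of Prop.~4.18, and then count braid orbits separately in the two cases singled out by the theorem, using the $\sh$-incidence matrix to keep the HM and double-identity cases in disjoint blocks.

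Write $g\in G=(\bZ/\ell)^2\xs\bZ/3$ as $(v,a)$ with $v\in(\bZ/\ell)^2$, $a\in\bZ/3$, and let $A$ denote the matrix giving the $\bZ/3$-action. For $\ell>3$, $I-A^j$ is invertible for $j\not\equiv 0\pmod 3$, so every element with $\bZ/3$-part $\pm1$ lies in a single conjugacy class $\C_\pm$. A Nielsen representative $\bg=((v_i,\epsilon_i))_{i=1}^4$ in $\bfC_{+3^2-3^2}$ is thus recorded by a sign pattern permuting $(+,+,-,-)$ together with $(v_1,\dots,v_4)\in((\bZ/\ell)^2)^4$ satisfying the single linear product-one relation
\[ v_1+A^{\epsilon_1}v_2+A^{\epsilon_1+\epsilon_2}v_3+A^{\epsilon_1+\epsilon_2+\epsilon_3}v_4=0,\]
with generation failing only on a thin locus. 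Prop.~4.18 then gives the lift invariant $s(\bg)\in\bZ/\ell$ as an explicit bilinear form in the $v_i$'s whose coefficients depend on the sign pattern and on the action of $A$ on the Heisenberg center.

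For an HM representative $(g,g^{-1},h,h^{-1})$ one has $\tilde g\tilde g^{-1}\tilde h\tilde h^{-1}=1$ in the Heisenberg extension, so $s=0$; by braid-invariance of $s$, every HM braid orbit has trivial lift invariant. The converse --- every $s(\bg)=0$ orbit contains an HM rep --- would be proven by using $q_2$ to move $g_2$ into the position $g_1^{-1}$ (the vanishing of the bilinear obstruction is precisely what enables this braid move to stay within the Nielsen class), and analogously using $q_1 q_3^{-1}$ together with $\sh$ to put $(g_3,g_4)$ into HM form. Reducing to HM normal form with $g=(v,+)$ and $h=(w,+)$, the residual reduced inner-equivalence collapses to the $\bZ/3$ generated by simultaneous multiplication by $A$ (from inner conjugation by $(0,1)$) and the involution $(v,w)\leftrightarrow(w,v)$ coming from $\sh^2$, yielding an order-$6$ symmetry on generating pairs $(v,w)$; a Burnside count then produces $K_\ell=(\ell\pm 1)/6$ orbits. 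For the nontrivial case, for each $\lambda\in(\bZ/\ell)^*$ I would exhibit a double identity rep $(g,g,g_3,g_4)$ with $s(\bg)=\lambda$: imposing $g_1=g_2$ in product-one leaves a $1$-parameter affine family on which the bilinear form restricts to a nonzero linear function of the free parameter, hitting every value. Braid-invariance separates these orbits by $\lambda$, and because HM and double-identity reps have distinct $\gamma_\infty=q_2$-cycle lengths they lie in disjoint $\sh$-incidence blocks, so the two families exhaust all braid orbits.

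The main obstacle is the precise orbit count $(\ell\pm 1)/6$: turning the heuristic $\ell^2/6$ into the exact expression demands a Burnside-style fixed-point analysis of the $\langle A,\sh^2\rangle$-action on pairs $(v,w)\in((\bZ/\ell)^2)^2$. The two cases arise because the $\bF_\ell[\bZ/3]$-module $(\bZ/\ell)^2$ is reducible exactly when $\ell\equiv 1\pmod 3$: when $A$ has eigenvectors over $\bF_\ell$ one picks up extra fixed pairs, while in the irreducible case ($\ell\equiv 2\pmod 3$) there are none. Balancing these fixed-point contributions against the thin locus of non-generating or degenerate pairs, and matching them with the braid action on the six sign patterns (paired by $q_1 q_3^{-1}$ and cycled by $\sh$), is what forces the answer to land on exactly $K_\ell$.
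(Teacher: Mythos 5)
The paper does not actually prove Theorem~\ref{level0MT}; it imports it verbatim as part of \cite[Thm.~5.2]{FrH15}, so there is no proof here to compare against. Evaluating your proposal on its own terms, the outline is plausible in spirit but has several genuine gaps beyond the one you flag.

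Your argument that an \HM\ representative $(g,g^{-1},h,h^{-1})$ has trivial lift invariant is correct and standard: the Schur--Zassenhaus lifts satisfy $\widetilde{g^{-1}}=\tilde g^{-1}$, so product-one is preserved upstairs. The converse, that every zero--lift-invariant orbit \emph{contains} an \HM\ rep, is not established by what you wrote. You say the vanishing of the bilinear obstruction ``is precisely what enables this braid move to stay within the Nielsen class,'' but $q_2$ and $q_1q_3^{-1}$ are always available on the Nielsen class --- the issue is whether repeated application lands you on an \HM\ tuple, which is a combinatorial statement about the orbit, not a local solvability condition unlocked by $s(\bg)=0$. Without a concrete normal-form algorithm (or a classification of $q_2$-orbits, i.e.\ cusps, for each braid orbit) this direction is unproven.

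Second, showing that \HM\ reps and double-identity reps sit in different $\gamma_\infty$-orbit blocks of the $\sh$-incidence matrix does not show the two families \emph{exhaust} all braid orbits; it only shows no single orbit is counted twice. You still need to rule out braid orbits containing neither an \HM\ rep nor a double-identity rep. Third, for the nontrivial-invariant case you produce one double-identity tuple per value $\lambda\in(\bZ/\ell)^*$, which by braid invariance of $s$ gives at least one orbit per $\lambda$; the theorem's ``each distinguished by the value'' also requires \emph{exactly} one orbit per $\lambda$, i.e.\ that the lift invariant is a complete invariant on that subset, which you don't address. Finally, you rightly identify the $K_\ell=(\ell\pm1)/6$ count as unfinished; the Burnside calculation over the order-$6$ symmetry, together with removing non-generating tuples, is where the $\ell\equiv\pm1\bmod 3$ case split actually arises, and the proposal leaves it as a heuristic. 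For all of these the honest route is to carry out the $\sh$-incidence/cusp analysis for this Nielsen class as \cite{FrH15} does, rather than arguing abstractly.
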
 

The orbits with nontrivial lift invariant are conjugate over $\bQ(e^{2\pi i/\ell})$, but this is not from the \BCL. As with Serre's \OIT, there is another Nielsen class to compare with this one. Indeed, that Nielsen class is $\ni((\bZ/\ell)^4,\bfC_{+3^2-3^2})^{\inn,\rd}$. One new phenomena is the presence of more than one \HM\ braid orbit. The {\sl Weil pairing\/} already appeared in the modular curve case. Here, too.  

\begin{guess} \label{OITgen}  Consider $(G,\bfC)$, with $r=4$, and a prime $\ell$ for which $G$ is $\ell$-perfect. Also, let  $\{\sH_k\}_{k=0}^\infty$ be a \MT\ on $\ni(G,\bfC)^{\inn,\rd}$, corresponding to a braid orbit $O$ on  $\{\ni(G_k,\bfC\}_{k=0}^\infty$,  Then, the geometric monodromy $G_O$ of $O$ is an eventually $\ell$-Frattini group. 

Further, if $\sP_{j'}=\{\bp_k'\in \sH_k\}_{k=0}^\infty$ is a projective sequence of points lying over $j'\in \bar \bQ$ then, $G(\sP_{j'})\cap G_O$ is an eventually $\ell$-Frattini subgroup of $G_O$. 
\end{guess}

As in Def.~\ref{namedclasses}, we use $\CM$ and $\GL_2$ for the respective Nielsen classes $\ni(D_\ell,\bfC_{2^4})$, $\ni((\bZ/\ell)^2,\bfC_{2^4})$ and the decorations that appear with them. 

In both \S\ref{earlyOIT-MT} and in \S\ref{pre95}, under \sdisplay{\cite{Se68}} we call attention to the eventually Frattini Def.~\ref{evenfratt} and its abstraction for the \CM\ and $\GL_2$ cases in \eqref{serreles}. Then, in  \sdisplay{\cite{Fr78}}  we take advantage of the relation between the two distinct Nielsen classes as follows. 
\begin{edesc} \label{CMGL2} \item \label{CMGL2a} The $\GL_2$ covers of the $j$-line are related to the \CM\ covers over the $j$-line, by the former being the Galois closure of the latter. 
\item \label{CMGL2b} For \CM\ or $\GL_2$, the extension of constants indicates that we have the appropriate description of the fiber according to the \OIT.  
\item \label{CMGL2c} In the $\GL_2$ case, in referring to \eql{CMGL2}{CMGL2b}, braids give the geometric elements, $\text{\rm SL}_2(\bZ/\ell^{k\np1})/\lrang{\pm1}$ in this case, of $N_{S_n}(G)/G$. That gives all the geometric monodromy of the $j$-line covers. \end{edesc} 

Comment on \eql{CMGL2}{CMGL2a}: This was what our discussion of \sdisplay{\cite{Fr78}} was about. 
From \eql{CMGL2}{CMGL2c} we \lq\lq see\rq\rq\  the $\GL_2$ geometric monodromy. The most well-known proof of \cite{Se68} is -- in our language -- that $
\{\text{\rm SL}_2(\bZ/\ell^{k\np1}/\lrang{\pm1}\}_{k=0}^{\infty}$ is  $\ell$-Frattini (resp.~eventually $\ell$-Frattini) for $\ell > 3$ (resp.~for all $\ell$). 

Such moduli spaces, affording refined ability to interpret cusps, enable objects of the style of the Tate curves (as in \sdisplay{\cite{Fr78}}{) around, say, the {\sl Harbater-Mumford\/} type cusps. This is compatible with those cusps in the discussions of \sdisplay{\cite[Thm.~3.21]{Fr95} ,  \cite{W98}  and  \cite{DEm05}}.  Nevertheless, these have yet to be sufficiently developed. 

As in the discussion of \sdisplay{\cite{Fr78}} we can expect inexplict versions of Faltings \cite{Fa83} if we can find any version of them at this time. Also, we will find ourselves asking how far into one of the eventually $\ell$-Frattini strands we must go to assure the fiber over  $j'\in \bar \bQ$ has revealed itself. 

\begin{rem}[Rem.~\eqref{braidorbits1} Cont.] \label{braidorbits2} \cite{FrH15} shows regular behavior on any one \MT. Still, there are basically different types of such towers, depending on whether the lift invariant in the tower is invertible $\!\!\mod \ell$  or not. In any case, as we change $\ell$, even at level 0, there are an increasing number of components, many of which are conjugate over $\bQ$. \end{rem}

\providecommand{\bysame}{\leavevmode\hbox to3em{\hrulefill}\thinspace}
\providecommand{\MR}{\relax\ifhmode\unskip\space\fi MR}
\providecommand{\MRhref}[2]{%
\href{http://www.ams.org/mathscinet-getitem?mr=#1}{#2}}
\providecommand{\href}[2]{#2}


\end{document}